\documentclass[a4paper]{article}
\usepackage{hyperref}
\usepackage{stmaryrd}
\usepackage{amsmath,bm}      
\usepackage{amsfonts}
\usepackage{amsthm}
\usepackage{amssymb}
\usepackage{mathrsfs}
\usepackage{tensor}
\usepackage{tikz}
\usetikzlibrary{matrix, arrows}
\usetikzlibrary{positioning}
\usetikzlibrary{decorations.pathreplacing}
\usepackage{adjustbox}
\theoremstyle{plain}
\newtheorem*{Example*}{\bfseries{\emph{Example}}}
\newtheorem*{Notation*}{\bfseries{\emph{Notation}}}
\newtheorem*{Theorem*}{\bfseries{\emph{Theorem}}}
\newtheorem*{Lemma*}{\bfseries{\emph{Lemma}}}
\newtheorem*{Proposition*}{\bfseries{\emph{Proposition}}}
\newtheorem*{Corollary*}{\bfseries{\emph{Corollary}}}
\newtheorem*{Remark*}{\bfseries{\emph{Remark}}}
\newtheorem*{Remarks*}{\bfseries{\emph{Remarks}}}
\newtheorem*{Def*}{\bfseries{\emph{Definition}}}
\newtheorem*{Conjecture*}{\bfseries{\emph{Conjecture}}}
\newtheorem*{sketch proof*}{Sketch proof}
\newtheorem{Theorem}{\bfseries{\emph{Theorem}}}[section]

\newtheorem{Lemma}[Theorem]{\bfseries{\emph{Lemma}}}
\newtheorem{Proposition}[Theorem]{\bfseries{\emph{Proposition}}}
\newtheorem{Corollary}[Theorem]{\bfseries{\emph{Corollary}}}
\newtheorem{Remark}[Theorem]{\bfseries{\emph{Remark}}}

\newcommand{\fil}{\text{Fil}}
\newcommand{\dR}{\text{dR}}
\newcommand{\cris}{\text{cris}}

  \title{Crystals of relative displays and Calabi-Yau threefolds}
  \author{Oliver Gregory\footnote{The author is supported by the ERC Consolidator Grant 681838 “K3CRYSTAL”.}} 
  \date{July 23, 2021}
   \begin{document}
   \maketitle
\begin{abstract}
Displays can be thought of as relative versions of Fontaine's notion of \emph{strongly divisible lattice} from integral $p$-adic Hodge theory. In favourable circumstances, the crystalline cohomology of a smooth projective $R$-scheme $X$ is endowed with a display-structure coming from complexes associated with the relative de Rham-Witt complex $W\Omega_{X/R}^{\bullet}$ of \cite{LZ04}. In this article, we use the crystal of relative displays of \cite{GL21} to prove a Grothendieck-Messing type result for the deformation theory of Calabi-Yau threefolds.
\end{abstract}
\section{Introduction}
Displays were introduced in \cite{Zin02} in order to extend the Dieudonn\'{e} module classification of $p$-divisible groups over perfect fields of characteristic $p>0$ to more general (not necessarily perfect) base rings $R$ in which $p$ is nilpotent. In \cite{LZ07} an expanded theory of ``higher'' displays was developed -- it can be thought of as a relative version of Fontaine's theory of strongly divisible lattices in integral $p$-adic Hodge theory \cite{Fon83}. It was shown in \cite{LZ07} and \cite{GL21} that the crystalline cohomology of a certain well-behaved class of smooth projective schemes over $R$ has the additional structure of a display. 

As an application of this theory, the deformation theory of $2$-displays over the small Witt ring was studied in \cite{LZ15} and used to completely describe the deformations of $F$-ordinary K3-type varieties (the $F$-ordinariness assumption was subsequently removed in \cite{Lau18}). The crucial tool used there is the crystal of relative $2$-displays associated to a K3-type variety. In \cite{GL21} it was shown that a crystal of relative displays exists more generally whenever the smooth projective scheme has a smooth versal deformation space. 

In this small note we study the deformation theory of Calabi-Yau threefolds via the crystal of relative displays. More precisely, we first recall and expand a little bit on the relevant theory of higher displays in sections \S\ref{Displays section} and \S\ref{relative displays section}. Then we quickly review the construction of display structure on crystalline cohomology via the Nygaard complexes in \S\ref{display on crystalline}, and the construction of the crystal of relative displays in \S\ref{Crystals of relative displays section}. Our philosophy is that this crystal gives a natural framework for crystalline deformation problems arising in mixed characteristic algebraic geometry. In \S\ref{Deformation theory of Calabi-Yau threefolds section} we prove the following Grothendieck-Messing type theorem:
\begin{Theorem*}
Let $S\twoheadrightarrow R$ be a nilpotent PD-thickening of artinian local $W(k)$-algebras with perfect residue field $k$, $\mathrm{char}(k)=p>0$. Let $X/R$ be a Calabi-Yau threefold which lifts to $\mathrm{Spec} \ S$. Suppose that the reduction $X\times_{\mathrm{Spec} R}\mathrm{Spec} \ k$ has a smooth versal deformation space. Then there are bijections
\begin{center}
\begin{tikzpicture}
 \node (a) [align=center] {$\text{deformation classes of } \ $ \\ $X/R\text{ over Spec }S \ $};
\node (b) [align=center] [right =0.8cm of a]{$ \ \text{ isomorphism classes of }$ \\ $\text{ geometric displays on }$ \\ $H_{\cris}^{3}(X/W(S))$};
\node (c) [align=center] [below =0.5cm of b]{$ \ \text{CY-type liftings of the}$ \\ \ $\text{ Hodge filtration of the }$ \\ $\text{relative display on }$ \\ $H_{\cris}^{3}(X/W(S))$};
\node (d) [left =0.8cm of c]{};
 \draw[<->](a) to node [above]{\emph{1:1}}(b);
 \draw[<->](c) to node [above]{\emph{1:1}}(d);
 \draw [decorate,decoration={brace,amplitude=2pt},xshift=0pt,yshift=0pt]
(-1.9,-0.5) -- (-1.9,0.5)node [black,xshift=0pt] {};
 \draw [decorate,decoration={brace,amplitude=2pt},xshift=0pt,yshift=0pt]
(1.75,0.5) -- (1.75,-0.5)node [black,xshift=0pt] {};
 \draw [decorate,decoration={brace,amplitude=2pt},xshift=0pt,yshift=0pt]
(3,-0.7) -- (3,0.7)node [black,xshift=0pt] {};
\draw [decorate,decoration={brace,amplitude=2pt},xshift=0pt,yshift=0pt]
(6.8,0.7) -- (6.8,-0.7)node [black,xshift=0pt] {};
 \draw [decorate,decoration={brace,amplitude=2pt},xshift=0pt,yshift=-50pt]
(3,-1.3) -- (3,0.5)node [black,xshift=0pt] {};
\draw [decorate,decoration={brace,amplitude=2pt},xshift=0pt,yshift=-50pt]
(6.8,0.55) -- (6.8,-1.3)node [black,xshift=0pt] {};
\end{tikzpicture}
\end{center}  
\end{Theorem*}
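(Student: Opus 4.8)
The plan is to deduce the theorem from the crystal of relative displays of \cite{GL21}, in the spirit of the treatment of K3-type varieties in \cite{LZ15}, the genuinely new ingredient being a Kodaira--Spencer computation special to Calabi--Yau threefolds. Throughout write $X_k:=X\times_{\mathrm{Spec}R}\mathrm{Spec}k$ and $h:=h^{2,1}(X_k)$. Since $X_k$ has a smooth versal deformation space, \S\ref{display on crystalline}--\S\ref{Crystals of relative displays section} apply: $H^3_{\cris}(X/W(R))$ carries a geometric display $\mathcal P_R$; the crystal property attaches to the PD-thickening $S\twoheadrightarrow R$ a relative display $\mathcal P_S$ on $H^3_{\cris}(X/W(S))$, reducing to $\mathcal P_R$ over $R$ and independent of the chosen lift of $X$ to $S$; and Poincaré duality equips $\mathcal P_R$ and $\mathcal P_S$ with a perfect alternating pairing (the weight $3$ being odd). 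By the Grothendieck--Messing property recalled in \S\ref{relative displays section}, upgrading $\mathcal P_S$ to a geometric display over $S$ that reduces to $\mathcal P_R$ amounts exactly to lifting the Hodge flag of the relative display (a flag defined over $R$) to a flag $\widetilde{\fil}^{\,\bullet}$ on $H^3_{\cris}(X/W(S))\otimes_{W(S)}S$ which (i) has graded pieces $\mathrm{gr}^3,\mathrm{gr}^2,\mathrm{gr}^1,\mathrm{gr}^0$ locally free of ranks $1,h,h,1$, (ii) is self-dual for the pairing --- so $\widetilde{\fil}^{\,2}$ is Lagrangian and $\widetilde{\fil}^{\,1}=(\widetilde{\fil}^{\,3})^{\perp}$ --- and (iii) is compatible with the divided Frobenius of $\mathcal P_S$, the divided Frobenii of the resulting display then being uniquely determined by the crystal structure. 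Conditions (i)--(iii) constitute the definition of a CY-type lift, so the right-hand bijection of the diagram is a tautology once the definitions are unwound, and it remains to match deformations with such lifts.

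First I would construct the comparison map $\Phi$: send a deformation $X_S/S$ of $X/R$ to the Hodge flag $\fil^\bullet H^3_{\dR}(X_S/S)$, transported into $H^3_{\cris}(X/W(S))\otimes_{W(S)}S$ along the canonical crystalline comparison $H^3_{\cris}(X_S/W(S))\cong H^3_{\cris}(X/W(S))$. This is well defined on isomorphism classes (crystalline cohomology is a crystal, and isomorphic deformations give the same flag), and it takes values in CY-type lifts because $X_S$ is itself a Calabi--Yau threefold. To prove $\Phi$ bijective I would argue with torsors. By hypothesis $X/R$ lifts to $S$, so both sides are nonempty; fix a deformation $X_S^0$ and take $\Phi(X_S^0)$ as compatible base points. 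Write $\mathfrak a:=\ker(S\to R)$, which after filtering by the divided powers we may assume of square zero. Then the set of isomorphism classes of deformations of $X/R$ over $S$ is a torsor under $H^1(X,T_{X/R})\otimes_R\mathfrak a$: the obstruction in $H^2(X,T_{X/R})\otimes_R\mathfrak a$ is irrelevant because a lift exists, and $H^0(X_k,T_{X_k})=0$ (as $T_{X_k}\cong\Omega^2_{X_k}$ and $h^{2,0}(X_k)=0$), so ``deformation classes'' is literally this torsor. The set of CY-type lifts is a torsor under $\mathrm{Hom}_R(\mathrm{gr}^3,\mathrm{gr}^2)\otimes_R\mathfrak a\cong H^1(X,\Omega^2_{X/R})\otimes_R\mathfrak a$ --- after trivializing $\mathrm{gr}^3=H^0(X,\Omega^3_{X/R})\cong R$, and using that $\widetilde{\fil}^{\,2}$ together with the remaining steps is recovered from $\widetilde{\fil}^{\,3}$ through conditions (ii)--(iii). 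Under these identifications $\Phi$ is equivariant for the homomorphism $\kappa\colon H^1(X,T_{X/R})\to H^1(X,\Omega^2_{X/R})$ given by cup product with the Kodaira--Spencer class followed by the trivialization of $\mathrm{gr}^3$.

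Then I would invoke the Calabi--Yau input. Interior product with a generator of $\omega_{X/R}=\Omega^3_{X/R}$ gives isomorphisms $\Omega^2_{X/R}\xrightarrow{\ \sim\ }T_{X/R}$ and $H^0(X,\Omega^3_{X/R})\xrightarrow{\ \sim\ }R$, and under the induced identification $H^1(X,T_{X/R})\cong H^1(X,\Omega^2_{X/R})$ the map $\kappa$ is, up to sign, the identity --- this is the classical local-Torelli statement behind the Bogomolov--Tian--Todorov theorem, whose proof (contraction with the holomorphic volume form) is insensitive to the characteristic. Hence the homomorphism for which $\Phi$ is equivariant is an isomorphism, so the torsor map $\Phi$ is bijective; composing with the bijection of the preceding paragraph yields all the bijections in the statement.

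The main obstacle is the second paragraph, and within it two points of bookkeeping rather than any single hard theorem: (a) checking that the geometric Kodaira--Spencer class of $X_S/S$ genuinely computes the differential of the display-theoretic map $\Phi$, i.e. that the crystalline comparison and the relative display structure of \cite{GL21} are compatible with Kodaira--Spencer --- which in the mixed-characteristic setting requires care with the Nygaard complexes of \S\ref{display on crystalline}; and (b) formulating the compatibility (iii) so that a CY-type lift is effectively the datum of $\widetilde{\fil}^{\,3}$ alone, i.e. so that the Lagrangian step $\widetilde{\fil}^{\,2}$ is \emph{not} free data but is recovered from $\widetilde{\fil}^{\,3}$ via the divided Frobenius, mirroring the fact that the infinitesimal variation of Hodge structure of a Calabi--Yau threefold is generated in top weight. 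Granting these, the smooth-versal-deformation-space hypothesis supplies everything else: it forces the deformation functor to be pro-represented by a formally smooth $W(k)$-algebra of relative dimension $h$, so that the torsor description is valid and its rank matches that of the target.
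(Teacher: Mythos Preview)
Your overall strategy --- reduce to square-zero thickenings, identify both sides as torsors, and show the comparison map is equivariant for the Kodaira--Spencer isomorphism coming from the trivialisation $\mathcal{T}_{X_0/k}\cong\Omega^2_{X_0/k}$ --- is essentially the paper's approach in more abstract language. The paper works explicitly with the versal family $\mathfrak{X}/\mathfrak{S}$, writes down the parallel-transport formula $\Psi(u)=v+\sum_i(f(t_i)-g(t_i))\breve{\nabla}_i(\tilde{u})$, and constructs the inverse map to $Y\mapsto F_Y$ by hand; you replace this with a torsor comparison. Both routes rest on the same Kodaira--Spencer computation (the paper's Proposition~\ref{Kodaira-Spencer}), so at that level your plan is sound.

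However, there are two genuine gaps. First, your definition of ``CY-type lifting'' is not the paper's. The paper's condition (iii) reads $E^2=E^3\oplus\nabla E^3$ and $(E^2)^\perp=E^2$, where $\nabla$ is the Gauss--Manin connection; your condition (iii) is ``compatibility with the divided Frobenius'', and your obstacle (b) explicitly proposes to recover $\widetilde{\fil}^{\,2}$ from $\widetilde{\fil}^{\,3}$ via $F_i$. That is the wrong mechanism: the paper recovers $E^2$ from the line $E^3$ by applying $\nabla$, and the self-orthogonality $(E^2)^\perp=E^2$ is then checked separately (Lemma~\ref{no perp}, using $\mathfrak{a}^2=0$ and the vanishing of the cup-products away from complementary Hodge types). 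Relatedly, your condition (i) is only a rank condition, whereas the paper imposes the Griffiths-transversality constraint $E^3\subseteq\sum_{i+j=3}\mathfrak{a}^{[i]}\fil^jH^3_{\dR}(X'/S)$; without it the set of lines is not the correct torsor. You should replace your (i), (iii) by the paper's conditions, after which your torsor argument goes through and your obstacle (b) dissolves.

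Second, your claim that $h^{2,0}(X_k)=0$ is unjustified: the paper's definition of Calabi--Yau threefold only forces $h^{0,1}=h^{0,2}=0$, and Hodge symmetry can fail in characteristic $p$ (this is precisely why Corollary~\ref{CY3 and DCY3 equivalence} carries the extra hypothesis $H^0(X_0,\mathcal{T}_{X_0/k})=0$). Fortunately this is not fatal to your argument: the set of \emph{isomorphism classes of deformations} is a torsor under $H^1(X_k,\mathcal{T}_{X_k/k})\otimes_k\mathfrak{a}$ regardless of $H^0$, so simply delete that sentence.
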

Let us remark that the proof of the theorem does not really use displays at all and is closer to the techniques of \cite{Del81b}. We have chosen to rephrase the result in terms of displays at the end of the article to put it in the context of \cite{LZ15}. Our theorem is a lot weaker than the main result of \cite{LZ15} for K3-type varieties - the main reason is that the deformation theory of $3$-displays is significantly more complicated than is the case for $2$-displays. It would be interesting to understand the deformation theory of the class of $3$-displays appearing in \S\ref{Deformation theory of Calabi-Yau threefolds section}.  

\ \\  
{\bf{Acknowledgements}} \ \ The results in this paper formed a part of my PhD thesis. It is a pleasure to thank my supervisor Andreas Langer for introducing me to this topic and for all of his help. Most of this work was written whilst I was visiting Universit\"{a}t Bielefeld; I thank my host Thomas Zink for his interest and hospitality. I also thank the referee for pointing out a number of mistakes and generally helping improve the paper. 
\section{Crystalline cohomology over $W(R)$}
For $p$ a prime, let $R$ be a noetherian $\mathbb{Z}_{(p)}$-algebra in which $p$ is nilpotent. For each $n\in\mathbb{N}$ let $W_{n}(R)$ be the ring of truncated Witt vectors of length $n$, so in particular $R=W_{1}(R)$ and $W(R)\cong\varprojlim_{n}W_{n}(R)$. 
\par
For any $\mathbb{Z}_{(p)}$-algebra S, the ideal $pS\subset S$ has a PD-structure $\gamma$ given on $x=py\in pS$ by
\begin{equation*}
\gamma_{m}(x):=\frac{p^{m}}{m!}y^{m} \ \ \ \ \ \ \forall m\in\mathbb{N}
\end{equation*}
(this makes sense because $\frac{p^{m-1}}{m!}\in\mathbb{Z}_{(p)}$, and is independent of the choice of $y$).  In particular, we have a PD-structure on $pW_{n}(R)\subset W_{n}(R)$ and this is compatible with the PD-structure $\gamma$ on $I_{R}:=\tensor*[^V]{W_{n-1}(R)}{}\subset W_{n}(R)$ by
\begin{equation*}
\gamma_{m}(\tensor*[^V]{\xi}{}):=\frac{p^{m-1}}{m!}\tensor*[^V]{(\xi^{m})}{} \ \ \ \ \ \ \forall m\in\mathbb{N}, \ \xi\in W_{n-1}(R)
\end{equation*}
Let $\text{Spec }W_{n}(R)$ be the PD-scheme with respect to the PD-ideal $I_{R}$. For a quasi-compact $R$-scheme $X$ we write $\text{Cris}(X/W_{n}(R))$ for the crystalline site of $X$ relative to $\text{Spec }W_{n}(R)$. The objects are PD-thickenings $(U\hookrightarrow T,\delta)$ of Zariski open subsets $U\subset X$, that is $U\hookrightarrow T$ is a closed $W_{n}(R)$-immersion defined by an ideal sheaf $\mathcal{J}$ whose PD-structure $\delta$ is compatible with $\gamma$, and such that $p$ is nilpotent on $T$. The morphisms $(U\hookrightarrow T,\delta)\rightarrow (U'\hookrightarrow T',\delta')$ are commutative diagrams
\begin{center}
\begin{tikzpicture}[descr/.style={fill=white,inner sep=1.5pt}]
        \matrix (m) [
            matrix of math nodes,
            row sep=2.5em,
            column sep=2.5em,
            text height=1.5ex, text depth=0.25ex
        ]
        { U' & T'  \\
          U & T & \\
        };

        \path[overlay,->, font=\scriptsize]
        (m-2-2) edge (m-1-2)
        (m-2-1) edge (m-1-1);
                
        \path[overlay,right hook->, font=\scriptsize]
        (m-1-1) edge (m-1-2)
        (m-2-1) edge (m-2-2);     
       
\end{tikzpicture} 
\end{center} 
where $U\rightarrow U'$ is a Zariski inclusion and $T\rightarrow T'$ is a PD-morphism. The coverings are the sets of morphisms $\{(U_{i}\hookrightarrow T_{i},\delta_{i})\rightarrow (U\hookrightarrow T,\delta)\}$ where the $T_{i}\rightarrow T$ are open immersions and $T=\cup T_{i}$.
\par
Let $(X/W_{n}(R))_{\cris}$ be the associated topos. Then for each $n\geq m$ we have a canonical morphism of topoi
\begin{equation*}
\iota_{m,n}:(X/W_{m}(R))_{\cris}\rightarrow (X/W_{n}(R))_{\cris}
\end{equation*}
Given a sheaf $\mathcal{E}_{n}$ on $\text{Cris}(X/W_{n}(R))$, let $\mathbb{R}\Gamma_{\cris}(X/W_{n}(R),\mathcal{E}_{n})$ denote the object in the derived category of $W_{n}(R)$-modules computing the cohomology of $(X/W_{n}(R))_{\cris}$ with coefficients in $\mathcal{E}_{n}$.
\par
Now suppose that $X/R$ is proper and smooth, and let $\{\mathcal{E}_{n}\}_{n\in\mathbb{N}}$ be a compatible system of locally free crystals on $\{\text{Cris}(X/W_{n}(R))\}_{n\in\mathbb{N}}$, in the sense that for all $n\geq m$ we have
\begin{equation*}
\iota_{m,n}^{\ast}\mathcal{E}_{n}\simeq\mathcal{E}_{m}
\end{equation*}
\begin{Proposition}
There is a perfect object $\mathbb{R}\Gamma_{\cris}(X/W(R),\mathcal{E})$ in the derived category of $W(R)$-modules such that 
\begin{equation*}
\mathbb{R}\Gamma_{\mathrm{cris}}(X/W(R),\mathcal{E})\otimes_{W(R)}^{\mathbb{L}}W_{n}(R)\simeq\mathbb{R}\Gamma_{\mathrm{cris}}(X/W_{n}(R),\mathcal{E}_{n})
\end{equation*}
for each $n\in\mathbb{N}$.
\end{Proposition}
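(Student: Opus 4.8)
The plan is to \emph{define}
\[
\mathbb{R}\Gamma_{\cris}(X/W(R),\mathcal{E}):=R\varprojlim_{n}\,\mathbb{R}\Gamma_{\cris}(X/W_{n}(R),\mathcal{E}_{n})
\]
as a derived inverse limit in the derived category of $W(R)$-modules, the transition maps being induced by the morphisms of topoi $\iota_{m,n}$ together with the identifications $\iota_{m,n}^{\ast}\mathcal{E}_{n}\simeq\mathcal{E}_{m}$, and then to show that this object is perfect and satisfies the asserted base-change formula. (When $\mathcal{E}$ is the structure crystal this is, up to the crystalline comparison theorem, nothing but $\mathbb{R}\Gamma(X,W\Omega_{X/R}^{\bullet})$ for the relative de Rham--Witt complex $W\Omega_{X/R}^{\bullet}=\varprojlim_{n}W_{n}\Omega_{X/R}^{\bullet}$ of \cite{LZ04}; the argument below is the coefficient version of this.) It rests on three finite-level inputs together with one glueing step.

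The first input is that, for every fixed $n$, the complex $\mathbb{R}\Gamma_{\cris}(X/W_{n}(R),\mathcal{E}_{n})$ is a \emph{perfect} complex of $W_{n}(R)$-modules; this is the finiteness theorem for the crystalline cohomology of the proper smooth $R$-scheme $X$ relative to the PD-base $(\mathrm{Spec}\,W_{n}(R),I_{R},\gamma)$ with coefficients in a locally free crystal, the ring $W_{n}(R)$ being noetherian with $p$ nilpotent on it (concretely, it may be computed by a coefficient version of the relative de Rham--Witt complex of \cite{LZ04}). The second input is that, for $n\geq m$, the restriction $W_{n}(R)\to W_{m}(R)$ is a morphism of PD-schemes for the ideals $I_{R}$, under which $X$ and its reduction are unchanged and $R\otimes_{W_{n}(R)}W_{m}(R)=R$; hence the crystalline base-change theorem, together with $\iota_{m,n}^{\ast}\mathcal{E}_{n}\simeq\mathcal{E}_{m}$, provides natural isomorphisms
\[
\mathbb{R}\Gamma_{\cris}(X/W_{n}(R),\mathcal{E}_{n})\otimes^{\mathbb{L}}_{W_{n}(R)}W_{m}(R)\;\xrightarrow{\ \sim\ }\;\mathbb{R}\Gamma_{\cris}(X/W_{m}(R),\mathcal{E}_{m}).
\]
The third input is that each transition $W_{n+1}(R)\to W_{n}(R)$ is surjective with \emph{nilpotent} kernel: the kernel is the ideal $V^{n}(R)\subset W_{n+1}(R)$, and the identity $V^{n}(a)V^{n}(b)=p^{n}V^{n}(ab)$ gives $\bigl(V^{n}(R)\bigr)^{k+1}=V^{n}(p^{nk}R)$, which vanishes once $p^{nk}R=0$; such a $k$ exists since $p$ is nilpotent on $R=W_{1}(R)$.

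With these in hand, the glueing step is to realise the compatible system $\{\mathbb{R}\Gamma_{\cris}(X/W_{n}(R),\mathcal{E}_{n})\}_{n}$, together with the base-change isomorphisms above, by a single perfect complex over $W(R)=\varprojlim_{n}W_{n}(R)$. Concretely, one chooses inductively a tower $\{P_{n}^{\bullet}\}_{n}$ of strictly perfect complexes -- bounded complexes of finite free modules -- over $\{W_{n}(R)\}_{n}$ with termwise-surjective transition maps $P_{n+1}^{\bullet}\twoheadrightarrow P_{n}^{\bullet}$ inducing the base-change isomorphisms, and with compatible quasi-isomorphisms $P_{n}^{\bullet}\simeq\mathbb{R}\Gamma_{\cris}(X/W_{n}(R),\mathcal{E}_{n})$: at the $(n+1)$-st step one picks a strictly perfect representative of $\mathbb{R}\Gamma_{\cris}(X/W_{n+1}(R),\mathcal{E}_{n+1})$ and, using the nilpotence of the transition kernel (and stabilising by acyclic complexes where necessary), adjusts it so that its reduction modulo $V^{n}(R)$ is identified with $P_{n}^{\bullet}$. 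Carrying out these identifications coherently along the whole tower is the only delicate point and, I expect, the main obstacle; it is, however, a standard glueing statement for perfect complexes over an inverse limit of rings with nilpotent transition kernels, which one could also quote directly.

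Granting the tower, put $P^{\bullet}:=\varprojlim_{n}P_{n}^{\bullet}$. Its terms are $P^{i}=\varprojlim_{n}W_{n}(R)^{r_{i}}=W(R)^{r_{i}}$, where $r_{i}$ is the (constant) rank, so $P^{\bullet}$ is a bounded complex of finite free $W(R)$-modules, hence perfect. Being termwise surjective, the tower $\{P_{n}^{\bullet}\}_{n}$ satisfies the Mittag--Leffler condition, so $P^{\bullet}=R\varprojlim_{n}P_{n}^{\bullet}\simeq\mathbb{R}\Gamma_{\cris}(X/W(R),\mathcal{E})$; in particular the construction is independent of the choices made. Finally, since $P^{\bullet}$ is a bounded complex of free modules, $-\otimes^{\mathbb{L}}_{W(R)}W_{n}(R)$ may be computed without deriving, so
\[
\mathbb{R}\Gamma_{\cris}(X/W(R),\mathcal{E})\otimes^{\mathbb{L}}_{W(R)}W_{n}(R)=P^{\bullet}\otimes_{W(R)}W_{n}(R)=P_{n}^{\bullet}\simeq\mathbb{R}\Gamma_{\cris}(X/W_{n}(R),\mathcal{E}_{n})
\]
for every $n$, which is the required identity.
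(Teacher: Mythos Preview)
Your approach is essentially the paper's: define the object as the derived inverse limit, verify compatibility of the finite-level complexes under base change along $W_{n+1}(R)\to W_{n}(R)$, check perfectness, and invoke a glueing lemma for perfect complexes over an inverse limit of rings with nilpotent transition kernels. The paper simply cites \cite[\href{https://stacks.math.columbia.edu/tag/09AR}{Tag 09AR}]{Stacks} for the glueing step you sketch by hand.

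There is one genuine soft spot in your write-up. Your ``first input'' asserts that $\mathbb{R}\Gamma_{\cris}(X/W_{n}(R),\mathcal{E}_{n})$ is perfect for every $n$ by appealing to the finiteness theorem, ``the ring $W_{n}(R)$ being noetherian''. But $W_{n}(R)$ is \emph{not} noetherian in general for a noetherian $\mathbb{Z}_{(p)}$-algebra $R$; one typically needs an $F$-finiteness hypothesis for this. The paper sidesteps the issue: it verifies perfectness only at level $n=1$, where the base is $R$ itself (noetherian by hypothesis), via the comparison $\mathbb{R}\Gamma_{\cris}(X/R,\mathcal{E}_{1})\simeq\mathbb{R}\Gamma_{\mathrm{Zar}}(X,\mathcal{E}_{1,X}\otimes_{\mathcal{O}_{X}}\Omega_{X/R}^{\bullet})$. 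Perfectness at higher levels then comes for free from the base-change isomorphisms and the nilpotence of the transition kernels (this bootstrap is part of what Tag 09AR packages). So your conclusion is correct, but the justification should be rerouted through level $1$ rather than through an unproven noetherianness claim for $W_{n}(R)$.
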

\begin{proof}
By \cite[\href{https://stacks.math.columbia.edu/tag/09AR}{Tag 09AR}]{Stacks} it suffices to prove that for all $n\in\mathbb{N}$ we have
\begin{equation*}
\mathbb{R}\Gamma_{\cris}(X/W_{n+1}(R),\mathcal{E}_{n+1})\otimes_{W_{n+1}(R)}^{\mathbb{L}}W_{n}(R)\simeq\mathbb{R}\Gamma_{\cris}(X/W_{n}(R),\mathcal{E}_{n})
\end{equation*}
and that $\mathbb{R}\Gamma_{\cris}(X/R,\mathcal{E}_{1})$ is perfect. The first fact is the base-change theorem for crystalline cohomology (\cite{BO78}, 7.8), and the second fact follows from the comparison with de Rham cohomology:
\begin{equation*}
\mathbb{R}\Gamma_{\cris}(X/R,\mathcal{E}_{1})\simeq\mathbb{R}\Gamma_{\text{Zar}}(X,\mathcal{E}_{1,X}\otimes_{\mathcal{O}_{X}}\Omega_{X/R}^{\bullet}) 
\end{equation*}
This is (\cite{BO78},7.1) in the special case that $X$ is smooth. The right hand side is perfect since we assumed $R$ to be noetherian and the sheaves of differentials $\Omega_{X/R}^{n}$ are locally free of finite type. Then
\begin{equation*}
\mathbb{R}\Gamma_{\cris}(X/W(R),\mathcal{E}):=\mathbb{R}\varprojlim_{n}\mathbb{R}\Gamma_{\cris}(X/W_{n}(R),\mathcal{E}_{n})
\end{equation*} 
is the object we seek.
\end{proof}
\begin{Def*}
The crystalline cohomology of $X$ over $W(R)$ with coefficients in $\mathcal{E}$ is defined to be
\begin{equation*}
H^{m}_{\mathrm{cris}}(X/W(R),\mathcal{E}):=\mathbb{R}^{m}\Gamma_{\mathrm{cris}}(X/W(R),\mathcal{E})
\end{equation*}
If $\mathcal{E}=\{\mathcal{O}_{X/W_{n}(R)}\}_{n\in\mathbb{N}}=\mathcal{O}_{X/W(R)}$, then the crystalline cohomology of $X$ over $W(R)$ is
\begin{equation*}
H_{\mathrm{cris}}^{m}(X/W(R)):=H^{m}_{\mathrm{cris}}(X/W(R),\mathcal{O}_{X/W(R)})
\end{equation*}
\end{Def*}
Let $S$ be a $\mathbb{Z}_{(p)}$-scheme and $X$ an $S$-scheme. In \cite{LZ04}, the relative de Rham-Witt complex $W\Omega_{X/S}^{\bullet}=\{W_{n}\Omega_{X/S}^{\bullet}\}_{n\in\mathbb{N}}$ is constructed as the initial object in the category of $F$-$V$-procomplexes. In the case that $S=\text{Spec }k$ is the spectrum of a perfect field of characteristic $p$ the relative de Rham-Witt complex recovers the de Rham-Witt complex of Bloch-Illusie. In our situation, the hypercohomology of $W\Omega_{X/R}^{\bullet}$ computes the crystalline cohomology of proper smooth schemes $X/R$:-
\begin{Theorem}\label{comparison}
Let $R$ be a noetherian $\mathbb{Z}_{(p)}$-algebra in which $p$ is nilpotent, and let $X/\text{Spec R}$ be a proper smooth scheme. Then there is a canonical isomorphism
\begin{equation*}
H_{\mathrm{cris}}^{m}(X/W(R))\cong\mathbb{H}^{m}(X,W\Omega_{X/R}^{\bullet})
\end{equation*}
\end{Theorem}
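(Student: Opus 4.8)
The plan is to prove the comparison at each finite level $W_n(R)$ as an isomorphism in the derived category of Zariski sheaves on $X$, and then pass to the inverse limit. Write $u_{X/W_n(R)}\colon (X/W_n(R))_{\cris}\to X_{\mathrm{Zar}}$ for the canonical morphism of topoi; the Leray spectral sequence gives $\mathbb{R}\Gamma_{\cris}(X/W_n(R))\simeq\mathbb{R}\Gamma\big(X,\mathbb{R}u_{X/W_n(R),*}\mathcal{O}_{X/W_n(R)}\big)$, and by definition $\mathbb{R}\Gamma_{\cris}(X/W(R))=\mathbb{R}\varprojlim_n\mathbb{R}\Gamma_{\cris}(X/W_n(R))$. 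Since $X$ is proper over the noetherian ring $R$ and the $W_n\Omega_{X/R}^{i}$ are coherent $W_n(\mathcal{O}_X)$-modules, the groups $\mathbb{H}^{m}(X,W_n\Omega_{X/R}^{\bullet})$ are finitely generated over the noetherian ring $W_n(R)$; the transition maps then satisfy the Mittag--Leffler condition, $\mathbb{R}^{1}\varprojlim$ vanishes, and $\mathbb{H}^{m}(X,W\Omega_{X/R}^{\bullet})\cong\varprojlim_n\mathbb{H}^{m}(X,W_n\Omega_{X/R}^{\bullet})$ -- likewise on the crystalline side the cohomology is finitely generated over $W_n(R)$ by the perfectness established in the Proposition. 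It therefore suffices to construct, functorially in $n$ and compatibly with the maps $\iota_{n,n+1}^{*}$ on one side and the restriction maps $R\colon W_{n+1}\Omega_{X/R}^{\bullet}\to W_n\Omega_{X/R}^{\bullet}$ on the other, a quasi-isomorphism of complexes of abelian sheaves on $X$
\begin{equation*}
c_n\colon W_n\Omega_{X/R}^{\bullet}\xrightarrow{\ \sim\ }\mathbb{R}u_{X/W_n(R),*}\mathcal{O}_{X/W_n(R)},
\end{equation*}
and then to apply $\mathbb{R}\Gamma(X,-)$ and $\mathbb{R}\varprojlim_n$.

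Both sides of $c_n$ are complexes of sheaves, so the claim is local on $X$; restrict to an affine open $U=\mathrm{Spec}\,A$ with $A$ smooth over $R$, and shrink further so that $A$ lifts to a compatible system of smooth $W_n(R)$-algebras $A_n$ (equivalently to a formally smooth lift $\widehat{A}=\varprojlim_n A_n$ of the $p$-adic completion over $W(R)$) carrying an endomorphism $\sigma\colon\widehat{A}\to\widehat{A}$ that lifts the Frobenius of $A$ and is semilinear over the Frobenius $F$ of $W(R)$; all of this exists by formal smoothness (the obstructions to lifting a smooth affine scheme, and to lifting $\sigma$ along the $p$-adic tower, vanish). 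The closed immersion $\mathrm{Spec}\,A\hookrightarrow\mathrm{Spec}\,A_n$ is a PD-thickening over $W_n(R)$ whose ideal $I_R A_n$ already carries the PD-structure extended from $\gamma$, so its divided-power envelope is $A_n$ itself and the crystalline Poincaré lemma (\cite{BO78}, \S6--7; the case $n=1$ is $\mathbb{R}u_{X/R,*}\mathcal{O}_{X/R}\simeq\Omega_{X/R}^{\bullet}$, used above) identifies $\mathbb{R}u_{X/W_n(R),*}\mathcal{O}_{X/W_n(R)}|_U$ with the de Rham complex $\Omega_{A_n/W_n(R)}^{\bullet}$ of the smooth lift. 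The system $\{\Omega_{A_n/W_n(R)}^{\bullet}\}_n$ then acquires the structure of an $F$-$V$-procomplex over $A/R$ from the data $(A_\bullet,\sigma)$ -- restriction from $A_{n+1}\to A_n$ and $W_{n+1}(R)\to W_n(R)$, the operator $F$ from $\sigma$ together with the Witt-vector Frobenius, and $V$ from the formal relation $V=p\sigma^{-1}$ -- exactly as in Illusie's treatment of the absolute case. By the universal property of $W_\bullet\Omega_{A/R}^{\bullet}$ as the initial $F$-$V$-procomplex (\cite{LZ04}) one obtains a unique morphism of $F$-$V$-procomplexes $W_\bullet\Omega_{A/R}^{\bullet}\to\{\Omega_{A_\bullet/W_\bullet(R)}^{\bullet}\}$; one checks that it is independent of the auxiliary choice of $(\widehat{A},\sigma)$, so these local morphisms glue to the desired global $c_n$.

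It remains to show that each $c_n$ is a quasi-isomorphism, which we do by induction on $n$. The base case $n=1$ is the map $\Omega_{X/R}^{\bullet}\to\mathbb{R}u_{X/R,*}\mathcal{O}_{X/R}$, i.e.\ precisely the crystalline Poincaré lemma for smooth $X/R$ already used in the proof of the Proposition. For the inductive step one compares two filtrations: on the de Rham--Witt side the short exact sequence of complexes $0\to\fil^{\,n-1}W_n\Omega_{X/R}^{\bullet}\to W_n\Omega_{X/R}^{\bullet}\xrightarrow{R}W_{n-1}\Omega_{X/R}^{\bullet}\to0$, coming from Langer--Zink's relative analogue of Illusie's standard filtration, whose bottom term $\fil^{\,n-1}W_n\Omega_{X/R}^{\bullet}$ is an explicit complex assembled from $\Omega_{X/R}^{\bullet}$, its truncations, and its subsheaves of cocycles and coboundaries; on the crystalline side the base-change distinguished triangle relating $\mathbb{R}u_{X/W_n(R),*}\mathcal{O}$, $\mathbb{R}u_{X/W_{n-1}(R),*}\mathcal{O}$ and a complex which, via the $n=1$ comparison, is again built from $\Omega_{X/R}^{\bullet}$ and $\ker\!\big(W_n(R)\to W_{n-1}(R)\big)$. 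The morphism $c_\bullet$ is compatible with both, so by the inductive hypothesis the step reduces to showing that it induces an isomorphism on the two bottom pieces; the five lemma then upgrades the quasi-isomorphism from level $n-1$ to level $n$. Applying $\mathbb{R}\Gamma(X,-)$ and $\mathbb{R}\varprojlim_n$ as in the first paragraph gives the canonical isomorphism $H_{\cris}^{m}(X/W(R))\cong\mathbb{H}^{m}(X,W\Omega_{X/R}^{\bullet})$.

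The hard part is exactly this comparison on the bottom graded pieces. Describing the graded quotients of the standard filtration on the relative de Rham--Witt complex $W_\bullet\Omega_{A/R}^{\bullet}$ -- in terms of $\Omega_{A/R}^{\bullet}$, its truncations, and the sheaves of cocycles and coboundaries -- is considerably more delicate over a general noetherian base $R$ than in Illusie's absolute setting, because $R$ need not be perfect and carries no Frobenius endomorphism; matching this, under $c_\bullet$, with the crystalline cohomology of the PD-thickening $W_n(R)\twoheadrightarrow W_{n-1}(R)$ is where the real work lies, and it is carried out in \cite{LZ04}.
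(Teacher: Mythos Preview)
The paper's proof is a one-line citation to \cite[Theorem 3.5]{LZ04}; your proposal is an expanded sketch of the strategy behind that theorem, and you explicitly defer the substantive step (the graded pieces of the standard filtration on $W_\bullet\Omega_{X/R}^\bullet$) to the same reference. In that sense the two ``proofs'' coincide: both are appeals to \cite{LZ04}, yours with more scaffolding.

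Two small technical wobbles in your sketch are worth cleaning up. First, the justification for Mittag--Leffler is not that the $\mathbb{H}^m(X,W_n\Omega_{X/R}^\bullet)$ are finitely generated over the noetherian ring $W_n(R)$ --- decreasing chains in a noetherian module need not stabilise --- but rather the perfectness of $\mathbb{R}\Gamma_{\cris}(X/W(R))$ over $W(R)$ established in the Proposition, which forces the transition maps to be eventually surjective after base change. Second, writing ``$V=p\sigma^{-1}$'' is not literally correct: $\sigma$ is not invertible, and in the $F$-$V$-procomplex structure on $\{\Omega^\bullet_{A_n/W_n(R)}\}_n$ the operator $V$ is constructed more carefully (in \cite{LZ04} one rather uses the Witt lift $W_n(A)$ and its PD-de Rham complex, rather than an arbitrary smooth lift with Frobenius, to get the comparison map). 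Neither point undermines the overall outline, since you hand the hard part back to \cite{LZ04} anyway.
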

\begin{proof}
This is (\cite{LZ04}, Theorem 3.5).
\end{proof}

\section{Displays}\label{Displays section}
Fix a prime number $p$. In this preliminary section we shall briefly collect together some aspects of the theory of displays, mainly drawing from \cite{LZ07}, \cite{LZ15} and \cite{Wid12}.
\begin{Def*}
A \emph{frame} $\mathscr{F}$ is a quintuple $(W,J,R,\sigma,\dot{\sigma})$, where $R$ and $W$ are rings and $J\subseteq W$ is an ideal such that $W/J=R$. The map $\sigma:W\rightarrow W$ is an endomorphism and $\dot{\sigma}:J\rightarrow W$ is a $\sigma$-linear $W$-module homomorphism. We require the following conditions to be satisfied:-
\par
(i) $J+pW\subseteq\mathcal{R}ad(W)$ 
\par
(ii) $\sigma(a)\equiv a^{p}\mod pW$ for all $a\in W$
\par
(iii) $\dot{\sigma}(J)$ generates $W$ as a $W$-module.
\end{Def*}

By definition, condition (iii) says that the linearisation $\dot{\sigma}^{\#}:J^{(\sigma)}\rightarrow W$ is surjective. Choose $b\in J^{(\sigma)}$ such that $\dot{\sigma}^{\#}(b)=1$ and set $\theta=\sigma^{\#}(b)$. Then for any $a\in J$, we compute
\begin{align*}
\sigma(a)
& =\dot{\sigma}^{\#}(b)\sigma(a)\\
& =\dot{\sigma}^{\#}(ab)\\
& =\sigma^{\#}(b)\dot{\sigma}(a)\\
& =\theta\dot{\sigma}(a)
\end{align*}
so we see that there is a unique $\theta\in W$ such that $\sigma(a)=\theta\dot{\sigma}(a)$ for every $a\in J$. We shall always assume that $\theta=p$.

\begin{Def*}
A \emph{Verj\"{u}ngung} for a frame $\mathscr{F}=(W,J,R,\sigma,\dot{\sigma})$ is the data of two $W$-module homomorphisms
\begin{equation*}
\nu:J\otimes_{W}J\rightarrow J \ \ \text{ and } \ \ \pi:J\rightarrow J
\end{equation*} 
satisfying the following four conditions for all $\eta,\eta'\in J$:-
\begin{align*}
& \text{(i)} \ \ \hspace{0.5cm}\pi(\nu(\eta,\eta'))=\eta\eta'\\
& \text{(ii)} \ \hspace{0.5cm}\dot{\sigma}(\nu(\eta,\eta'))=\dot{\sigma}(\eta)\dot{\sigma}(\eta')\\
& \text{(iii)} \hspace{0.5cm}\dot{\sigma}(\pi(\eta))=\sigma(\eta)\\
& \text{(iv)} \hspace{0.5cm}\ker\dot{\sigma}\cap\ker\pi=0
\end{align*}
\end{Def*}

The iterations $\nu^{(k)}:J\otimes_{W}\cdots\otimes_{W}J\rightarrow J$ of the Verj\"{u}ngung are well-defined, and we write $J_{k}$ for the image of $\nu^{(k)}$. 

Before defining what a display over a frame with Verj\"{u}ngung $\mathscr{F}$ is, we shall first introduce the intermediate concept of a predisplay; the category of $\mathscr{F}$-displays will then be a certain well-behaved full subcategory of the category of $\mathscr{F}$-predisplays.
\begin{Notation*}
In the above situation, given a $\sigma$-linear map of $W$-modules $f:M\rightarrow N$, we will denote by $\tilde{f}:J\otimes_{W}M\rightarrow N$ the $\sigma$-linear map given by $\tilde{f}(\eta\otimes m)=\dot{\sigma}(\eta)f(m)$. 
\end{Notation*}
\begin{Def*}
Let $\mathscr{F}=(W,J,R,\sigma,\dot{\sigma})$ be a frame. An \emph{$\mathscr{F}$-predisplay} $\mathcal{P}=(P_{i},\iota_{i},\alpha_{i},F_{i})$ is the following data: for each $i\geq 0$ we have
\newline
\par
(i) a $W$-module $P_{i}$
\par
(ii) two $W$-module homomorphisms 
\begin{equation*}
\iota_{i}:P_{i+1}\rightarrow P_{i} \ \ \text{ and 
} \ \ \alpha_{i}:J\otimes_{W}P_{i}\rightarrow P_{i+1}  
\end{equation*}
\par
(iii) a $\sigma$-linear map $F_{i}:P_{i}\rightarrow P_{0}$.
\newline
\newline
The data is required to satisfy the following properties:-
\newline
\par 
(P1) $\iota_{i}\circ\alpha_{i}=\alpha_{i-1}\circ(\text{id}_{J}\otimes\iota_{i-1})$ for $i\geq 1$
\par 
(P2) $\iota_{i}\circ\alpha_{i}:J\otimes P_{i}\rightarrow P_{i}$ is the multiplication map for each $i\geq 0$
\par 
(P3) $F_{i+1}\circ\alpha_{i}=\tilde{F}_{i}$ for all $i\geq 0$.
\end{Def*}
An immediate consequence of the definition is that $F_{i}\circ\iota_{i}=pF_{i+1}$. Indeed, let $\eta\in J$ and $x\in P_{i+1}$. Then we compute
\begin{equation*}
p\tilde{F}_{i}(\eta\otimes\iota_{i}(x))=pF_{i+1}\circ\alpha_{i}(\eta\otimes\iota_{i}(x))=F_{i+1}(\eta x)=p\sigma(\eta)F_{i+1}(x)
\end{equation*}
but 
\begin{equation*}
p\tilde{F}_{i}(\eta\otimes\iota_{i}(x))=p\dot{\sigma}(\eta)F_{i}(\iota_{i}(x))=\sigma(\eta)F_{i}(\iota_{i}(x))
\end{equation*}
The idea is that we should think of the $F_{i}$ as divided Frobenius maps and we shall often refer to them as such.
\begin{Def*}
Let $\mathcal{P}=(P_{i},\iota_{i},\alpha_{i},F_{i})$ and $\mathcal{P}'=(P'_{i},\iota'_{i},\alpha'_{i},F'_{i})$ be two $\mathscr{F}$-predisplays. A \emph{morphism of predisplays} $\psi:\mathcal{P}\rightarrow\mathcal{P}'$ is a sequence of $W$-module homomorphisms $\psi_{i}:P_{i}\rightarrow P'_{i}$ such that the following squares commute for each $i\geq 0$:-
\begin{center}
\begin{tikzpicture}[descr/.style={fill=white,inner sep=1.5pt}]
        \matrix (m) [
            matrix of math nodes,
            row sep=2.5em,
            column sep=2.5em,
            text height=1.5ex, text depth=0.25ex
        ]
        { P_{i+1} & P_{i} & J\otimes_{W}P_{i} & P_{i+1} & P_{i} & P_{0} \\
          P'_{i+1} & P'_{i} & J\otimes_{W}P'_{i} & P'_{i+1} & P'_{i} & P'_{0}\\
        };

        \path[overlay,->, font=\scriptsize]
        (m-1-1) edge node [left]{$\psi_{i+1}$} (m-2-1)
        (m-1-2) edge node [right]{$\psi_{i}$} (m-2-2)
        (m-1-3) edge node [left]{$\text{id}_{J}\otimes\psi_{i}$} (m-2-3)
        (m-1-4) edge node [right]{$\psi_{i+1}$} (m-2-4)
        (m-1-5) edge node [left]{$\psi_{i}$} (m-2-5)
        (m-1-6) edge node [right]{$\psi_{0}$} (m-2-6)
        (m-1-1) edge node [above]{$\iota_{i}$} (m-1-2)
        (m-1-3) edge node [above]{$\alpha_{i}$} (m-1-4)
        (m-1-5) edge node [above]{$F_{i}$} (m-1-6)
        (m-2-1) edge node [above]{$\iota'_{i}$} (m-2-2)
        (m-2-3) edge node [above]{$\alpha'_{i}$} (m-2-4)
        (m-2-5) edge node [above]{$F'_{i}$} (m-2-6);
\end{tikzpicture}
\end{center} 
\end{Def*}
In this way we have the category $\mathscr{F}$-$\underline{\mathcal{P}redisp}$ of $\mathscr{F}$-predisplays.
\par
Let $\mathcal{P}$ and $\mathcal{P}'$ be $\mathscr{F}$-predisplays as in the definition above. Then we define the direct sum of $\mathcal{P}$ and $\mathcal{P'}$ as 
\begin{equation*}
\mathcal{P}\oplus\mathcal{P'}:=(P_{i}\oplus P'_{i},\iota_{i}\oplus\iota'_{i},\alpha_{i}\oplus\alpha'_{i},F_{i}\oplus F'_{i})
\end{equation*}
Clearly this gives another $\mathscr{F}$-predisplay.
\par
Given a morphism of predisplays $\psi:\mathcal{P}\rightarrow\mathcal{P}'$, we can define the \emph{kernel} of $\psi$ as
\begin{equation*}
\mathcal{K}er\ \psi:=(\ker\psi_{i},\bar{\iota}_{i},\bar{\alpha}_{i},\bar{F}_{i})
\end{equation*}
where $\bar{\iota}_{i}=\iota_{i}|_{\ker\psi_{i+1}}$, $\bar{\alpha}_{i}=\alpha_{i}|_{J\otimes_{W}\ker\psi_{i}}$ and $\bar{F}_{i}=F_{i}|_{\ker\psi_{i}}$. Similarly we can also define $\mathcal{C}oker\ \psi$. It is immediate that $\mathcal{K}er\ \psi$ and $\mathcal{C}oker\ \psi$ are both $\mathscr{F}$-predisplays.
\begin{Corollary}
$\mathscr{F}$-$\underline{\mathcal{P}redisp}$ is an abelian category.
\end{Corollary}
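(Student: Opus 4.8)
The plan is to reduce everything to the fact that the category of $W$-modules is abelian, exploiting that all of the constructions above are performed degreewise. Concretely, I would verify the axioms of an abelian category in turn. For additivity: the predisplay with $P_{i}=0$ for all $i$ is a zero object; the $\mathrm{Hom}$-sets are the subgroups of $\prod_{i\geq 0}\mathrm{Hom}_{W}(P_{i},P'_{i})$ consisting of families satisfying the three compatibility squares, and this subset is closed under addition because $\iota_{i},\alpha_{i},F_{i}$ are additive, tensoring with $J$ is additive in each variable, and $F_{i}$ (being $\sigma$-linear) is in particular additive; composition is bilinear for the same reasons; and $\mathcal{P}\oplus\mathcal{P}'$ as defined above is a biproduct, its projections and inclusions satisfying the biproduct identities degreewise. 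For kernels and cokernels: these are the predisplays $\mathcal{K}er\,\psi$ and $\mathcal{C}oker\,\psi$ recalled above, and their universal properties are checked degreewise --- a morphism into $\mathcal{P}$ (resp. out of $\mathcal{P}'$) that vanishes after composition with $\psi$ has each component factoring uniquely through $\ker\psi_{i}$ (resp. $\mathrm{coker}\,\psi_{i}$) in $W$-modules, and these factorisations automatically assemble into a morphism of predisplays because the structure maps of the source (resp. target) already respect the sub- (resp. quotient-) modules.

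It then remains to check that every monomorphism is a kernel and every epimorphism is a cokernel, and I would do this via the standard reformulation that for every $\psi\colon\mathcal{P}\to\mathcal{P}'$ the canonical morphism $\overline{\psi}\colon\mathcal{C}oim\,\psi\to\mathcal{I}m\,\psi$ is an isomorphism, where $\mathcal{C}oim\,\psi:=\mathcal{C}oker(\mathcal{K}er\,\psi\hookrightarrow\mathcal{P})$ and $\mathcal{I}m\,\psi:=\mathcal{K}er(\mathcal{P}'\twoheadrightarrow\mathcal{C}oker\,\psi)$. By the degreewise descriptions just used, $\mathcal{C}oim\,\psi$ has $i$-th module $P_{i}/\ker\psi_{i}$, $\mathcal{I}m\,\psi$ has $i$-th module $\mathrm{im}\,\psi_{i}\subseteq P'_{i}$, and $\overline{\psi}_{i}$ is the first-isomorphism-theorem isomorphism $P_{i}/\ker\psi_{i}\xrightarrow{\sim}\mathrm{im}\,\psi_{i}$ in the category of $W$-modules. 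Finally, a morphism of predisplays that is an isomorphism in each degree is an isomorphism in $\mathscr{F}$-$\underline{\mathcal{P}redisp}$, since the degreewise inverses satisfy the compatibility squares (invert the squares for $\overline{\psi}$) and furnish a two-sided inverse; hence $\overline{\psi}$ is an isomorphism, which gives the claim.

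The only point requiring care --- and the place where a reader might want more detail --- is the assertion, already implicit in calling $\mathcal{K}er\,\psi$ and $\mathcal{C}oker\,\psi$ predisplays, that the restricted and induced structure maps are well-defined and still satisfy (P1)--(P3): that $\alpha_{i}$ carries $J\otimes_{W}\ker\psi_{i}$ (the honest tensor product, which need not inject into $J\otimes_{W}P_{i}$, so ``restriction'' here means precomposition with $\mathrm{id}_{J}\otimes(\ker\psi_{i}\hookrightarrow P_{i})$) into $\ker\psi_{i+1}$, that $\iota_{i}$ and $F_{i}$ do likewise, and dually for the cokernels. This is a routine diagram chase using $W$-linearity of $\iota_{i},\alpha_{i}$ and $\sigma$-linearity of $F_{i}$ together with commutativity of the defining squares of $\psi$. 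I do not anticipate any genuine obstacle: the substance of the corollary is simply that $\mathscr{F}$-predisplays form a full subcategory of a diagram category valued in an abelian category that is closed under the relevant kernels, cokernels and biproducts.
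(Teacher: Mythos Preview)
Your proposal is correct and essentially matches the paper's approach: the paper gives no proof at all, treating the corollary as immediate from the preceding observations that direct sums, kernels, and cokernels of predisplays are again predisplays. You have simply spelled out the degreewise verification that the paper leaves implicit, including the one genuinely delicate point (that $\bar{\alpha}_{i}$ must be defined via precomposition with $\mathrm{id}_{J}\otimes(\ker\psi_{i}\hookrightarrow P_{i})$ rather than literal restriction), which the paper glosses over.
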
 
When $\mathscr{F}$ is a frame with Verj\"{u}ngung, we can build $\mathscr{F}$-predisplays using \emph{standard data}:-
\begin{Def*}
Let $\mathscr{F}=(W,J,R,\sigma,\dot{\sigma},\nu,\pi)$ be a frame with Verj\"{u}ngung. A \emph{standard datum} for $\mathscr{F}$ is a a set of finitely generated projective $W$-modules $L_{0},\ldots, L_{d}$, and $\sigma$-linear homomorphisms 
\begin{equation*}
\Phi_{i}:L_{i}\rightarrow L_{0}\oplus\cdots\oplus L_{d}
\end{equation*}
for each $i=0,\ldots, d$, such that the induced map $\Phi_{0}\oplus\cdots\oplus \Phi_{d}$ is a $\sigma$-linear isomorphism.
\end{Def*}
Given a standard datum $(L_{i},\Phi_{i})$, we set
\begin{equation*}
P_{i}:=J_{i}L_{0}\oplus\cdots\oplus JL_{i-1}\oplus\cdots\oplus L_{d}
\end{equation*}
Then the maps $\iota_{i}$, $\alpha_{i}$ and $F_{i}$ are defined by the following diagrams:-
\begin{center}
\begin{tikzpicture}[descr/.style={fill=white,inner sep=1.5pt}]
        \matrix (m) [
            matrix of math nodes,
            row sep=2.5em,
            column sep=-0.3em,
            text height=1.5ex, text depth=0.25ex
        ]
        { P_{i+1} & = & J_{i+1}L_{0} & \oplus & J_{i}L_{1} & \oplus & \cdots & \oplus & JL_{i} & \oplus & L_{i+1} & \oplus & \cdots & \oplus & L_{d} \\
            P_{i} & = & J_{i}L_{0} & \oplus & J_{i-1}L_{1} & \oplus & \cdots & \oplus & L_{i} & \oplus & L_{i+1} & \oplus & \cdots & \oplus & L_{d}\\
        };

        \path[overlay,->, font=\scriptsize]
        (m-1-1) edge node [left]{$\iota_{i}$} (m-2-1)
        (m-1-3) edge node [left]{$\pi$} (m-2-3)
        (m-1-5) edge node [left]{$\pi$} (m-2-5)
        (m-1-9) edge node [left]{$\text{id}$} (m-2-9)
        (m-1-11) edge node [left]{$\text{id}$} (m-2-11)
        (m-1-15) edge node [left]{$\text{id}$} (m-2-15);
\end{tikzpicture}
\end{center}
\begin{center}
\begin{adjustbox}{width=12cm}
\begin{tikzpicture}[descr/.style={fill=white,inner sep=1.5pt}]
        \matrix (m) [
            matrix of math nodes,
            row sep=2.5em,
            column sep=-0.3em,
            text height=1.5ex, text depth=0.25ex
        ]
        { J\otimes P_{i} & = & J\otimes J_{i}L_{0} & \oplus & J\otimes J_{i-1}L_{1} & \oplus & \cdots & \oplus & J\otimes L_{i} & \oplus & J\otimes L_{i+1} & \oplus & \cdots & \oplus & J\otimes L_{d} \\
            P_{i+1} & = & J_{i+1}L_{0} & \oplus & J_{i}L_{1} & \oplus & \cdots & \oplus & JL_{i} & \oplus & L_{i+1} & \oplus & \cdots & \oplus & L_{d}\\
        };

        \path[overlay,->, font=\scriptsize]
        (m-1-1) edge node [left]{$\alpha_{i}$} (m-2-1)
        (m-1-3) edge node [left]{$\nu$} (m-2-3)
        (m-1-5) edge node [left]{$\nu$} (m-2-5)
        (m-1-9) edge node [left]{$\nu_{1}=\text{id}$} (m-2-9)
        (m-1-11) edge node [left]{$\text{mult}$} (m-2-11)
        (m-1-15) edge node [left]{$\text{mult}$} (m-2-15);
\end{tikzpicture}
\end{adjustbox}
\end{center}
\begin{center}
\begin{tikzpicture}[descr/.style={fill=white,inner sep=1.5pt}]
        \matrix (m) [
            matrix of math nodes,
            row sep=2.5em,
            column sep= 0em,
            text height=1.5ex, text depth=0.25ex
        ]
        { P_{i} & = & J_{i}L_{0} & \oplus & \cdots & \oplus & JL_{i-1} & \oplus & L_{i} & \oplus & L_{i+1} & \oplus & L_{i+2} & \oplus & \cdots & \oplus & L_{d}\\
            P_{0} & = & L_{0} & \oplus & \cdots & \oplus & L_{i-1} & \oplus & L_{i} & \oplus & L_{i+1} & \oplus & L_{i+2} & \oplus & \cdots & \oplus & L_{d} \\
        };

        \path[overlay,->, font=\scriptsize]
        (m-1-1) edge node [left]{$F_{i}$} (m-2-1)
        (m-1-3) edge node [left]{$\tilde{\Phi}_{0}$} (m-2-3)
        (m-1-7) edge node [left]{$\tilde{\Phi}_{i-1}$} (m-2-7)
        (m-1-9) edge node [left]{$\Phi_{i}$} (m-2-9)
        (m-1-11) edge node [left]{$p\Phi_{i+1}$} (m-2-11)
        (m-1-13) edge node [left]{$p^{2}\Phi_{i+2}$} (m-2-13)
        (m-1-17) edge node [left]{$p^{d-i}\Phi_{d}$} (m-2-17);
       
\end{tikzpicture}
\end{center}
\begin{Lemma}
The tuple $(P_{i}, \iota_{i},\alpha_{i},F_{i})$ constructed from the standard datum $(L_{i},\Phi_{i})$ in the above fashion is an $\mathscr{F}$-predisplay.
\end{Lemma}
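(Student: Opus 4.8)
The plan is to verify the three predisplay axioms (P1)--(P3) directly, reducing each to the individual summands of the explicit direct-sum descriptions of the $P_i$; no input beyond the Verj\"{u}ngung axioms and a careful bookkeeping of the filtration degrees $J_k$ is needed. First the structural points. Each $J_k$ is a $W$-submodule of $W$, and since the $L_j$ are projective, hence flat, one may identify $J_kL_j$ with the submodule $J_k\otimes_W L_j$ of $L_j$; thus every $P_i$ is a $W$-module. Moreover $\pi$ restricts to maps $J_k\to J_{k-1}$ and $\nu$ to maps $J\otimes_W J_k\to J_{k+1}$, compatibly with the iterated Verj\"{u}ngung. Granting this, $\iota_i$ and $\alpha_i$ are $W$-linear, being assembled from $\pi$, $\nu$ and multiplication maps, and $F_i$ is $\sigma$-linear, being a sum of the $\sigma$-linear maps $\widetilde{\Phi}_j$, $\Phi_j$ and $p^{\ell}\Phi_j$. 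Note that the hypothesis that $\Phi_0\oplus\cdots\oplus\Phi_d$ is a $\sigma$-linear isomorphism plays no role here: it enters only when one promotes a predisplay to a display.

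For (P2) one decomposes $J\otimes_W P_i=\bigoplus_{j=0}^{d}J\otimes_W J_{i-j}L_j$ (with the convention $J_k:=W$ for $k\le 0$) and evaluates $\iota_i\circ\alpha_i$ on each summand. For $0\le j\le i-1$ the composite sends $\eta\otimes(\xi\otimes m)$ to $\pi(\nu(\eta,\xi))\otimes m=\eta\xi\otimes m$ by Verj\"{u}ngung~(i), that is, to $\eta\cdot(\xi\otimes m)$; for $j=i$ it sends $\eta\otimes m$ to $\eta m$, via $\nu_1=\mathrm{id}$ followed by the inclusion $JL_i\hookrightarrow L_i$; and for $j\ge i+1$ both maps are the evident multiplications. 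In every case $\iota_i\circ\alpha_i$ is multiplication by $J$, which is (P2).

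Axiom (P1) is the identity $\iota_i\circ\alpha_i=\alpha_{i-1}\circ(\mathrm{id}_J\otimes\iota_{i-1})$ of maps $J\otimes_W P_i\to P_i$; one checks that both composites preserve the summand index $j$, so the comparison can be made there. For $j\ge i$ both sides are straightforward multiplications (involving $\nu_1=\mathrm{id}$ when $j=i$) and coincide at once, as does the boundary term $j=i-1$ after one application of Verj\"{u}ngung~(i). The one case with genuine content is $0\le j\le i-2$: there the left-hand composite produces $\eta\xi\otimes m$ (again via Verj\"{u}ngung~(i)) whereas the right-hand composite produces $\nu\bigl(\eta,\pi(\xi)\bigr)\otimes m$, so one is reduced to proving $\eta\xi=\nu\bigl(\eta,\pi(\xi)\bigr)$ for $\eta\in J$ and $\xi\in J_{i-j}\subseteq J$. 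Applying $\dot{\sigma}$ to the two sides gives $\sigma(\eta)\dot{\sigma}(\xi)=p\,\dot{\sigma}(\eta)\dot{\sigma}(\xi)$ and $\dot{\sigma}(\eta)\dot{\sigma}(\pi(\xi))=\dot{\sigma}(\eta)\sigma(\xi)=p\,\dot{\sigma}(\eta)\dot{\sigma}(\xi)$ respectively, using $\sigma$-linearity of $\dot{\sigma}$, the standing convention $\theta=p$, and Verj\"{u}ngung~(ii)--(iii); applying $\pi$ to the two sides gives $\eta\pi(\xi)$ in both cases, using $W$-linearity of $\pi$ and Verj\"{u}ngung~(i). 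Since $\ker\dot{\sigma}\cap\ker\pi=0$ by Verj\"{u}ngung~(iv), the two elements agree. This is the main---and essentially the only---non-formal step; it is precisely where axiom~(iv) is used, and it explains why the ranges $j\le i-2$, $j=i-1$, $j=i$ and $j\ge i+1$ have to be treated separately.

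Finally (P3), $F_{i+1}\circ\alpha_i=\widetilde{F}_i$, is verified on the same summands. For $0\le j\le i-1$ the left side sends $\eta\otimes(\xi\otimes m)$ to $\dot{\sigma}(\nu(\eta,\xi))\,\Phi_j(m)$, which equals $\dot{\sigma}(\eta)\dot{\sigma}(\xi)\,\Phi_j(m)=\dot{\sigma}(\eta)F_i(\xi\otimes m)$ by Verj\"{u}ngung~(ii); for $j=i$ both sides send $\eta\otimes m$ to $\dot{\sigma}(\eta)\,\Phi_i(m)$; and for $j\ge i+1$ the left side sends $\eta\otimes m$ to $p^{\,j-i-1}\Phi_j(\eta m)=p^{\,j-i-1}\sigma(\eta)\Phi_j(m)=p^{\,j-i}\dot{\sigma}(\eta)\Phi_j(m)=\dot{\sigma}(\eta)F_i(m)$, using $\sigma$-linearity of $\Phi_j$ together with $\sigma(\eta)=p\,\dot{\sigma}(\eta)$. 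This gives (P3), and together with (P1) and (P2) completes the verification.
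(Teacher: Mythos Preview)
Your proof is correct and follows the same summand-by-summand strategy as the paper. For (P2) and (P3) your argument is essentially identical to the paper's, with the same three-way case split on the index $j$.

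The interesting point of comparison is (P1). The paper dispatches (P1) in one line (``immediate from the definition of Verj\"{u}ngung''), whereas you give a careful case analysis and isolate the identity $\eta\xi=\nu(\eta,\pi(\xi))$ for $\eta\in J$, $\xi\in J_{i-j}$ as the one non-formal step, proving it via axioms (ii), (iii), and crucially (iv). Your argument here is valid and in fact supplies content that the paper's one-liner suppresses: without this identity (or something equivalent) it is not even obvious that $\pi$ carries $J_k$ into $J_{k-1}$, which is implicitly needed for the map $\iota_i$ to be well-defined in the first place. So your version has the advantage of making transparent exactly where each Verj\"{u}ngung axiom---including the injectivity axiom (iv)---is used, at the cost of being longer. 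The paper's version is terser but leaves the reader to reconstruct this step.
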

\begin{proof}
We verify the properties (P1),(P2) and (P3) in turn. The strategy is to look at the individual direct summands:-
\par
(P1) This is immediate from the definition of Verj\"{u}ngung.
\par
(P2) We are required to prove that $\iota_{i}\circ\alpha_{i}=\text{mult}$. This is obvious for the $j^{th}$-summand with $j>i$, and also true for $j=i$ by the definition of Verj\"{u}ngung. For $j<i$, we need to check that $\pi\nu=\text{mult}$. So choose $x\in L_{j}$ and $\eta\in J$, $\eta'\in J_{i-j}$. Then the map $\iota_{i}\circ\alpha_{i}$ on the $j^{th}$-summand is
\begin{align*}
& \ \ J\otimes_{W}J_{i-j}L_{j}\xrightarrow{\nu}J_{i-j+1}L_{j}\xrightarrow{\pi}J_{i-j}L_{j} \\
& \ \eta\otimes\eta'\otimes x \ \mapsto \ \nu(\eta,\eta')\otimes x \ \mapsto \ \pi(\nu(\eta,\eta'))\otimes x
\end{align*}
but $\pi(\nu(\eta,\eta'))=\eta\eta'$.
\par
(P3) We are required to show $F_{i+1}\circ\alpha_{i}=\tilde{F}_{i}$ for $i\geq 0$. Consider the $j^{th}$-summand. Then we have three cases:- 
\par
The case $j=i$ is the trivial statement $\tilde{\Phi}_{j}=\tilde{\Phi}_{j}$. If $j>i$ then the map $\tilde{F}_{i}$ on the $j^{th}$-summand is $p^{j-i}\tilde{\Phi}_{j-i}$, $F_{i+1}$ is $p^{j-i-1}\Phi_{j+1}$ and $\alpha_{i}$ is $\text{mult}$. Let $x\in L_{j}$ and $\eta\in J$. Then we see that
\begin{align*}
F_{i+1}\circ\alpha_{i}(\eta\otimes x)
& = F_{i+1}(\eta x) \\
& = p^{j-i-1}\Phi_{j-i}(\eta x) \\
& = p^{j-i-1}\sigma(\eta)\Phi_{j-i}(x) \\
& = p^{j-i}\dot{\sigma}(\eta)\Phi_{j-i}(x) \\
& = p^{j-1}\tilde{\Phi}_{j-i}(\eta\otimes x) \\
& = \tilde{F}_{i}(\eta\otimes x)
\end{align*}
The final case is when $j<i$. In this case, the map $\tilde{F}_{i}$ on the $j^{th}$-summand is $\tilde{\tilde{\Phi}}_{j}$, $F_{i+1}$ is $\tilde\Phi_{j}$ and $\alpha_{i}$ is $\nu$. Let $x\in L_{j}$, $\eta\in J$ and $\eta'\in J_{i-j}$. Then we see that
\begin{align*}
F_{i+1}\circ\alpha_{i}(\eta\otimes\eta'\otimes x)
& =\tilde{\Phi}_{j}(\nu(\eta,\eta')\otimes x) \\
& =\dot{\sigma}(\nu(\eta,\eta'))\Phi_{j}(x) \\
& =\dot{\sigma}(\eta)\dot{\sigma}(\eta')\Phi_{j}(x) \\
& =\dot{\sigma}(\eta)\tilde{\Phi}_{j}(\eta'\otimes x) \\
& =\tilde{\tilde{\Phi}}_{j}(\eta\otimes\eta'\otimes x) \\
& =\tilde{F}_{i}(\eta\otimes\eta'\otimes x)
\end{align*}
\end{proof}
\begin{Def*}
The $\mathscr{F}$-predisplay constructed from the standard datum $(L_{i},\Phi_{i})$ $i=0,\ldots,d$ as above is called the \emph{$d$-$\mathscr{F}$-display} of the standard datum $(L_{i},\Phi_{i})$. A \emph{$d$-$\mathscr{F}$-display} is an $\mathscr{F}$-predisplay which is isomorphic to the $d$-$\mathscr{F}$-display of a standard datum. Such an isomorphism is called a \emph{normal decomposition}. 
\end{Def*}
A morphism of displays is simply a morphism of predisplays, and we denote the full subcategory of $\mathscr{F}$-displays by $\mathscr{F}$-$\underline{\mathcal{D}isp}$. One can define the usual linear algebra operations such as tensor product, duals, base change and so forth for $\mathscr{F}$-displays by appealing to the normal decomposition. These definitions turn out to be independent (up to canonical isomorphism) of the choice of normal decomposition. This gives $\mathscr{F}$-$\underline{\mathcal{D}isp}$ the structure of an exact tensor category. Note that it is not an abelian category, since ``quotients'' of displays will fail to be displays in general.

\begin{Def*}
Let $\mathscr{F}=(W,J,R,\sigma,\dot{\sigma},\nu,\pi)$ and $\mathscr{F}'=(W',J',R',\sigma',\dot{\sigma}',\nu',\pi')$ be two frames with Verj\"{u}ngung, and suppose that $u:\mathscr{F}'\rightarrow\mathscr{F}$ is a frame homomorphism. Let $\mathscr{D}=(P_{i},\iota_{i},\alpha_{i}, F_{i})$ be an $\mathscr{F}$-predisplay. Then the \emph{pullback $u^{\bullet}\mathscr{D}=(P'_{i},\iota'_{i},\alpha'_{i}, F'_{i})$ of $\mathscr{D}$ along $u$} is the $\mathscr{F}'$-predisplay where $P'_{i}=P_{i}$ but considered as $W'$-modules, $\iota'_{i}=\iota_{i}$, $F'_{i}=F_{i}$ and $\alpha'_{i}$ is the composite 
\begin{equation*}
J'\otimes_{W'}P'_{i}\rightarrow J\otimes_{W}P'_{i}\xrightarrow{\alpha_{i}}P'_{i+1}
\end{equation*}
The functor $u^{\bullet}:\mathscr{F}$-$\underline{\mathcal{D}isp}\rightarrow\mathscr{F'}$-$\underline{\mathcal{D}isp}$ has a left adjoint 
\begin{equation*}
u_{\bullet}:\mathscr{F'}\text{-}\underline{\mathcal{D}isp}\rightarrow\mathscr{F}\text{-}\underline{\mathcal{D}isp}
\end{equation*}
(see \cite{LZ15}, Prop. 6). If $\mathscr{D}'$ is an $\mathscr{F}'$-display, then the $\mathscr{F}$-display $u_{\bullet}\mathscr{D}'$ is called the \emph{base-change of $\mathscr{D}'$ along $u$}. If $(L'_{i},\Phi'_{i})$ is a standard datum for $\mathscr{D}'$, then $(W\otimes_{W'}L'_{i},\sigma\otimes\Phi_{i})$ is a standard datum for $u_{\bullet}\mathscr{D}'$. 
\end{Def*}
\section{$\mathscr{W}_{S/R}$-displays}\label{relative displays section}
Let $S\twoheadrightarrow R$ be a surjection of $p$-adic rings, with kernel $\mathfrak{a}$ equipped with a PD-structure, and such that $\mathfrak{a}$ is nilpotent modulo $pS$. In this note we are interested in displays over the relative Witt frame $\mathscr{W}_{S/R}$ of $S\twoheadrightarrow R$, which we now describe:

Let $I_{S}=\tensor*[^V]{W(S)}{}$ denote the image of Verschiebung. The PD-structure on $\mathfrak{a}$ allows us to define the divided Witt polynomials ${\bf{w}}_{n}/p^{n}$ on the ideal $W(\mathfrak{a})\subset W(S)$. The divided Witt polynomials give an isomorphism of additive groups
\begin{equation*}
\log:W(\mathfrak{a})\xrightarrow{\sim}\prod_{\mathbb{N}}\mathfrak{a}
\end{equation*}
Then $\tilde{a}:=\log^{-1}(\mathfrak{a})=(\mathfrak{a},0,0,\ldots)$ is an ideal in $W(S)$. In this way, we view $\mathfrak{a}$ as an ideal in $W(S)$.

Writing $\mathcal{J}=\ker(W(S)\rightarrow S\rightarrow R)$, we have a direct sum decomposition $\mathcal{J}=\tilde{a}\oplus I_{S}$, with $\sigma(\tilde{a})=0$ and $\tilde{a}\cdot I_{S}=0$, where $\sigma:W(S)\rightarrow W(S)$ is the Witt vector Frobenius. We define $\dot{\sigma}:\mathcal{J}\rightarrow W(S)$ using this decomposition; $\dot{\sigma}(a+\tensor*[^V]{\xi}{})=\xi$. Since $S$ is $p$-adically complete, $W(S)$ is both $p$-adically and $I_{S}$-adically complete (\cite{Zin02}, Prop. 3), and it follows that $\mathcal{J}+pW(S)\subseteq\mathcal{R}ad(W(S))$.

\begin{Def*}
The frame $\mathscr{W}_{S/R}=(W(S),\mathcal{J},R,\sigma,\dot{\sigma})$ is called the \emph{relative Witt frame} of $S/R$. If $S=R$, we simply write $\mathscr{W}_{R}$. 
\end{Def*}
The relative Witt frame $\mathscr{W}_{S/R}$ is endowed with a Verj\"{u}ngung $(\nu,\pi)$ as follows:
\begin{align*}
& \nu(a_{1}+\tensor*[^V]{\xi}{_1},a_{2}+\tensor*[^V]{\xi}{_2})=a_{1}a_{2}+\tensor*[^V]{(}{}\xi_{1}\xi_{2}) \\
& \pi(a+\tensor*[^V]{\xi}{})=a+p\tensor*[^V]{\xi}{}
\end{align*}

\begin{Def*}
Let $\mathscr{D}=(P_{i},\iota_{i},\alpha_{i}, F_{i})$ be a $\mathscr{W}_{S/R}$-display. The \emph{Hodge filtration} of $\mathscr{D}$ is the decreasing filtration of $P_{0}/\mathcal{J}P_{0}$ by the (direct summand) $R$-modules 
\begin{equation*}
\text{Fil}^{i}\mathscr{D}:=\text{im}\left(P_{i}\xrightarrow{\iota_{i}^{\text{iter}}}P_{0}\rightarrow P_{0}/\mathcal{J}P_{0}\right)
\end{equation*}
where $\iota_{i}^{\text{iter}}:=\iota_{i-1}\circ\cdots\circ\iota_{0}$.
\end{Def*}

\begin{Def*}
Let $\mathscr{D}=(P_{i},\iota_{i},\alpha_{i},F_{i})$ be a $\mathscr{W}_{S/R}$-display. Then a filtration by direct summands $E^{\bullet}\subseteq P_{0}/I_{S}P_{0}$ lifting the Hodge filtration $\text{Fil}^{\bullet}\mathscr{D}$ is called \emph{admissible} if 
\begin{equation*}
E^{i}\subseteq\text{im}\left(P_{i}\xrightarrow{\iota_{i}^{\text{iter}}}P_{0}\rightarrow P_{0}/I_{S}P_{0}\right)
\end{equation*}
for each $i\geq 0$.
\end{Def*}
\begin{Proposition}
Let $u:\mathscr{W}_{S}\rightarrow\mathscr{W}_{S/R}$. Then there is an equivalence of categories
\begin{center}
\begin{tikzpicture}
 \node (a) [align=center] {$\mathcal{W}_{S}\text{-displays } \ $};
\node (b) [align=center] [right =0.8cm of a]{$\mathcal{W}_{S/R}\text{-displays together with an}$ \\ $ \ \text{ admissible lifting of the Hodge filtration}$};
\node (z) [below =0.4cm of a]{};
\node (c) [right =0.1cm of z]{$\mathscr{D}$};
\node (d) [right =2.9cm of c]{$(u_{\bullet}\mathscr{D},\text{Fil}^{\bullet}\mathscr{D}$)};
 \draw[->](a) to node [above]{$\sim$}(b);
 \draw[|->](c) to node {}(d);
 \draw [decorate,decoration={brace,amplitude=2pt},xshift=0pt,yshift=0pt]
(-1.1,-0.25) -- (-1.1,0.25)node [black,xshift=0pt] {};
 \draw [decorate,decoration={brace,amplitude=2pt},xshift=0pt,yshift=0pt]
(1,0.25) -- (1,-0.25)node [black,xshift=0pt] {};
 \draw [decorate,decoration={brace,amplitude=2pt},xshift=0pt,yshift=0pt]
(2.2,-0.5) -- (2.2,0.5)node [black,xshift=0pt] {};
\draw [decorate,decoration={brace,amplitude=2pt},xshift=0pt,yshift=0pt]
(8.6,0.5) -- (8.6,-0.5)node [black,xshift=0pt] {};
\end{tikzpicture}
\end{center} 
Given a $\mathscr{W}_{S/R}$-display $\mathscr{D}$ and an admissible lifting $E^{\bullet}$ of $\text{Fil}^{\bullet}\mathscr{D}$, we shall write $\mathscr{D}_{E^{\bullet}}$ for the $\mathscr{W}_{S}$-display obtained from the above equivalence. By construction we have $\text{Fil}^{\bullet}\mathscr{D}_{E^{\bullet}}=E^{\bullet}$.
\end{Proposition}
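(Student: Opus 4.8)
The plan is to exhibit an explicit quasi-inverse, built from normal decompositions. First one checks that $\mathscr{D}\mapsto(u_{\bullet}\mathscr{D},\text{Fil}^{\bullet}\mathscr{D})$ is well-defined, i.e.\ that the Hodge filtration of a $\mathscr{W}_{S}$-display $\mathscr{D}$, which lives in $P_{0}/I_{S}P_{0}$, really is an admissible lifting of the Hodge filtration of $u_{\bullet}\mathscr{D}$, which lives in $P_{0}/\mathcal{J}P_{0}$: taking a normal decomposition $(L_{i},\Phi_{i})$ of $\mathscr{D}$, one has $\text{Fil}^{j}\mathscr{D}=\bigoplus_{i\geq j}(L_{i}\otimes_{W(S)}S)$, which maps onto $\text{Fil}^{j}(u_{\bullet}\mathscr{D})=\bigoplus_{i\geq j}(L_{i}\otimes_{W(S)}R)$ under $P_{0}/I_{S}P_{0}\to P_{0}/\mathcal{J}P_{0}$ and visibly lies in the image of $P_{j}\to P_{0}/I_{S}P_{0}$; this is routine. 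For the quasi-inverse, start from a $\mathscr{W}_{S/R}$-display $\mathscr{D}=(P_{i},\iota_{i},\alpha_{i},F_{i})$ and an admissible lifting $E^{\bullet}$ of $\text{Fil}^{\bullet}\mathscr{D}$, and produce a normal decomposition of $\mathscr{D}$ \emph{adapted} to $E^{\bullet}$: a standard datum $(L_{i},\Phi_{i})$ with $\mathscr{D}$ isomorphic to its $d$-$\mathscr{W}_{S/R}$-display and with $E^{j}=\bigoplus_{i\geq j}(L_{i}\otimes_{W(S)}S)$ for all $j$. Such a datum exists because $E^{\bullet}$ is a filtration by direct summands, admissibility forces each $E^{j}$ into the image of $P_{j}$, and $I_{S}\subseteq\mathcal{R}ad(W(S))$ lets one lift a splitting of $E^{\bullet}$ to a decomposition $P_{0}=\bigoplus L_{i}$ through which $\iota^{\text{iter}}$ factors compatibly; one then sets $\Phi_{i}:=F_{i}|_{L_{i}}$, and the display axioms of $\mathscr{D}$ force $\bigoplus\Phi_{i}$ to be a $\sigma$-linear isomorphism. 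Finally define $\mathscr{D}_{E^{\bullet}}$ to be the $d$-$\mathscr{W}_{S}$-display of the \emph{same} datum $(L_{i},\Phi_{i})$, which is legitimate because $\sigma$, $\dot{\sigma}|_{I_{S}}$, and the Verj\"{u}ngung of $\mathscr{W}_{S}$ are exactly the ones pulled back from $\mathscr{W}_{S/R}$ along $u$.

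That the two constructions are mutually quasi-inverse on objects is then formal. Since $u$ is the identity on $W(S)$, the functor $u_{\bullet}$ carries the $d$-$\mathscr{W}_{S}$-display of $(L_{i},\Phi_{i})$ to the $d$-$\mathscr{W}_{S/R}$-display of $(L_{i},\Phi_{i})$, so there is a canonical isomorphism $u_{\bullet}\mathscr{D}_{E^{\bullet}}\cong\mathscr{D}$, and by construction $\text{Fil}^{\bullet}\mathscr{D}_{E^{\bullet}}=E^{\bullet}$. Conversely, applying $u_{\bullet}$ to a normal decomposition of a $\mathscr{W}_{S}$-display $\mathscr{D}'$ yields a normal decomposition of $u_{\bullet}\mathscr{D}'$ adapted to $\text{Fil}^{\bullet}\mathscr{D}'$, whence $(u_{\bullet}\mathscr{D}')_{\text{Fil}^{\bullet}\mathscr{D}'}\cong\mathscr{D}'$. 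Independence of $\mathscr{D}_{E^{\bullet}}$ of the chosen adapted normal decomposition, and functoriality of everything in sight, will both drop out of full faithfulness, which I treat last.

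The crux is full faithfulness: for $\mathscr{W}_{S}$-displays $\mathscr{D}_{1}',\mathscr{D}_{2}'$ one must show $\mathrm{Hom}_{\mathscr{W}_{S}}(\mathscr{D}_{1}',\mathscr{D}_{2}')$ maps bijectively onto the set of morphisms $u_{\bullet}\mathscr{D}_{1}'\to u_{\bullet}\mathscr{D}_{2}'$ carrying $\text{Fil}^{\bullet}\mathscr{D}_{1}'$ into $\text{Fil}^{\bullet}\mathscr{D}_{2}'$. Injectivity is immediate, since the two copies of $P_{0}$ coincide and a morphism of displays is determined by its components on $P_{0}$ together with the divided Frobenii. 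For surjectivity, choose adapted normal decompositions on both sides: a morphism $f$ respecting the liftings is then a $W(S)$-linear map $\bigoplus L_{i}^{(1)}\to\bigoplus L_{i}^{(2)}$ preserving the filtrations $\bigoplus_{i\geq j}L_{i}^{(\bullet)}\otimes_{W(S)}S$ and intertwining the $\Phi_{i}$'s, and one must check that the induced maps on the $P_{i}$ are compatible with the $\iota_{i},\alpha_{i}$ of the $\mathscr{W}_{S}$-structure rather than of the $\mathscr{W}_{S/R}$-structure. This is exactly where the hypotheses on $\mathfrak{a}$ are used essentially: the discrepancy between the two frames is concentrated along $\tilde{a}=\log^{-1}(\mathfrak{a})$, on which $\sigma$ vanishes and which satisfies $\tilde{a}\cdot I_{S}=0$ with $\mathcal{J}=\tilde{a}\oplus I_{S}$, and the divided-power structure on $\mathfrak{a}$ (equivalently the isomorphism $\log$), together with $\mathfrak{a}$ being nilpotent modulo $pS$, forces the obstruction terms — which are values of the divided Frobenii $F_{i}$ on $W(\mathfrak{a})$-components — to vanish. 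I expect this verification to be the only genuinely delicate point; the rest is lifting of idempotents along radical ideals and bookkeeping. It is the relative-displays analogue of the corresponding comparison in \cite{LZ15}, from which it could alternatively be deduced.
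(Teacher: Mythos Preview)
The paper does not prove this proposition; its entire proof reads ``This is (\cite{LZ15}, Corollary 11).'' So there is no argument in the paper to compare against, and your closing remark that the result ``could alternatively be deduced'' from \cite{LZ15} is in fact exactly what the paper does---this \emph{is} \cite{LZ15}, Corollary 11, not merely an analogue.

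Your sketch is a reasonable reconstruction of how one would argue directly, and its overall architecture (pick a normal decomposition adapted to $E^{\bullet}$, use the same standard datum over $\mathscr{W}_{S}$, reduce everything to full faithfulness) is the right one. Two comments on the details. First, your injectivity claim (``a morphism of displays is determined by its components on $P_{0}$ together with the divided Frobenii'') deserves a word of justification: it holds because, in a normal decomposition, the maps $\psi_{i}$ on $P_{i}=\bigoplus_{j<i}(I_{S})_{i-j}L_{j}\oplus\bigoplus_{j\geq i}L_{j}$ are forced by $\psi_{0}$ via compatibility with $\iota$ and $\alpha$. Second, the surjectivity step you flag as ``the only genuinely delicate point'' is indeed where the work lies; your heuristic that the obstructions are Frobenius values on $\tilde{\mathfrak{a}}$-components and therefore vanish because $\sigma(\tilde{\mathfrak{a}})=0$ is morally correct, but the precise verification that a filtration-preserving $\mathscr{W}_{S/R}$-morphism lifts to a $\mathscr{W}_{S}$-morphism requires writing $\psi_{0}(L_{j}^{(1)})\subseteq\bigoplus_{k\geq j}L_{k}^{(2)}\oplus I_{S}P_{0}^{(2)}$ (this is exactly what ``respects $E^{\bullet}$'' gives, since $E^{j}$ lives in $P_{0}/I_{S}P_{0}$) and then checking compatibility with the $\mathscr{W}_{S}$-Verj\"{u}ngung term by term. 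You have identified the right ingredients; the argument just needs to be written out rather than gestured at.
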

\begin{proof}
This is (\cite{LZ15}, Corollary 11).
\end{proof}

\section{Display-structure on crystalline cohomology}\label{display on crystalline}
For smooth projective schemes whose cohomology behaves well with respect to base-change, \cite{LZ07} showed that the Nygaard complexes endow crystalline cohomology, which \emph{a priori} are $F$-crystals, with the richer structure of a display. We briefly recall the construction:
\par
Let $X/\text{Spec }R$ be a smooth projective scheme. Let $\mathcal{N}^{i}W_{n}\Omega_{X/R}^{\bullet}$ denote the complex of $W_{n}(R)$-modules
\begin{equation*}
W_{n-1}\mathcal{O}_{X}\xrightarrow{d}\cdots\xrightarrow{d}W_{n-1}\Omega_{X/R}^{i-1}\xrightarrow{dV}W_{n}\Omega_{X/R}^{i}\xrightarrow{d}W_{n}\Omega_{X/R}^{i}\xrightarrow{d}\cdots
\end{equation*} 
where we consider $W_{n-1}\Omega_{X/R}^{j}$ as a $W_{n}(R)$-module via restriction of scalars along the Witt vector Frobenius $\tensor*[^F]{}{}:W_{n}(R)\rightarrow W_{n-1}(R)$. Then $\mathcal{N}^{0}W_{n}\Omega_{X/R}^{\bullet}=W_{n}\Omega_{X/R}^{\bullet}$. Write $I_{R,n}:=\tensor*[^V]{W_{n-1}(R)}{}$. Then there are morphisms of complexes 
\begin{align*}
& \hat{\iota}_{i}:\mathcal{N}^{i+1}W_{n}\Omega_{X/R}^{\bullet}\rightarrow\mathcal{N}^{i}W_{n}\Omega_{X/R}^{\bullet} \\
& \hat{F}_{i}:\mathcal{N}^{i}W_{n}\Omega_{X/R}^{\bullet}\rightarrow W_{n-1}\Omega_{X/R}^{\bullet}\\
& \hat{\alpha}_{i}:I_{R,n}\otimes_{W_{n}(R)}\mathcal{N}^{i}W_{n}\Omega_{X/R}^{\bullet}\rightarrow\mathcal{N}^{i+1}W_{n}\Omega_{X/R}^{\bullet}
\end{align*}
given by
\begin{center}
\begin{adjustbox}{width=12cm}
\begin{tikzpicture}[descr/.style={fill=white,inner sep=1.5pt}]
        \matrix (m) [
            matrix of math nodes,
            row sep=2.5em,
            column sep=-0.5em,
            text height=1.5ex, text depth=0.25ex
        ]
        {  W_{n-1}\mathcal{O}_{X} & \xrightarrow{d} & \cdots & \xrightarrow{d} & W_{n-1}\Omega_{X/R}^{i-1} & \xrightarrow{d} & W_{n-1}\Omega_{X/R}^{i} & \xrightarrow{dV} & W_{n}\Omega_{X/R}^{i+1} & \xrightarrow{d} & W_{n}\Omega_{X/R}^{i+2} & \xrightarrow{d} & \cdots \\
            W_{n-1}\mathcal{O}_{X} & \xrightarrow{d} & \cdots & \xrightarrow{d} & W_{n-1}\Omega_{X/R}^{i-1} & \xrightarrow{dV} & W_{n}\Omega_{X/R}^{i} & \xrightarrow{d} & W_{n}\Omega_{X/R}^{i+1} & \xrightarrow{d} & W_{n}\Omega_{X/R}^{i+2} & \xrightarrow{d} & \cdots\\
        };

        \path[overlay,->, font=\scriptsize]
        (m-1-1) edge node [left]{$p$} (m-2-1)
        (m-1-5) edge node [left]{$p$} (m-2-5)
        (m-1-7) edge node [left]{$V$} (m-2-7)
        (m-1-9) edge node [left]{$\text{id}$} (m-2-9)
        (m-1-11) edge node [left]{$\text{id}$} (m-2-11);
\end{tikzpicture}
\end{adjustbox}
\end{center}
\begin{center}
\begin{adjustbox}{width=12cm}
\begin{tikzpicture}[descr/.style={fill=white,inner sep=1.5pt}]
        \matrix (m) [
            matrix of math nodes,
            row sep=2.5em,
            column sep=-0.5em,
            text height=1.5ex, text depth=0.25ex
        ]
        {   W_{n-1}\mathcal{O}_{X} & \xrightarrow{d} & \cdots & \xrightarrow{d} & W_{n-1}\Omega_{X/R}^{i-1} & \xrightarrow{dV} & W_{n}\Omega_{X/R}^{i} & \xrightarrow{d} & W_{n}\Omega_{X/R}^{i+1} & \xrightarrow{d} & W_{n}\Omega_{X/R}^{i+2} & \xrightarrow{d} & \cdots\\
            W_{n-1}\mathcal{O}_{X} & \xrightarrow{d} & \cdots & \xrightarrow{d} & W_{n-1}\Omega_{X/R}^{i-1} & \xrightarrow{d} & W_{n-1}\Omega_{X/R}^{i} & \xrightarrow{d} & W_{n-1}\Omega_{X/R}^{i+1} & \xrightarrow{d} & W_{n-1}\Omega_{X/R}^{i+2} & \xrightarrow{d} & \cdots\\
        };

        \path[overlay,->, font=\scriptsize]
        (m-1-1) edge node [left]{$\text{id}$} (m-2-1)
        (m-1-5) edge node [left]{$\text{id}$} (m-2-5)
        (m-1-7) edge node [left]{$F$} (m-2-7)
        (m-1-9) edge node [left]{$pF$} (m-2-9)
        (m-1-11) edge node [left]{$p^{2}F$} (m-2-11);
\end{tikzpicture}
\end{adjustbox}
\end{center}
\begin{center}
\begin{adjustbox}{width=12cm}
\begin{tikzpicture}[descr/.style={fill=white,inner sep=1.5pt}]
        \matrix (m) [
            matrix of math nodes,
            row sep=2.5em,
            column sep=-0.5em,
            text height=1.5ex, text depth=0.25ex
        ]
        {   I_{R}\otimes W\mathcal{O}_{X} & \rightarrow & \cdots & \rightarrow & I_{R}\otimes W\Omega_{X/R}^{i-1} & \rightarrow & I_{R}\otimes W\Omega_{X/R}^{i} & \rightarrow & I_{R}\otimes W\Omega_{X/R}^{i+1} & \rightarrow & I_{R}\otimes W\Omega_{X/R}^{i+2} & \rightarrow & \cdots\\
           W\mathcal{O}_{X} & \xrightarrow{d} & \cdots & \xrightarrow{d} & W\Omega_{X/R}^{i-1} & \xrightarrow{d} & W\Omega_{X/R}^{i} & \xrightarrow{dV} & W\Omega_{X/R}^{i+1} & \xrightarrow{d} & W\Omega_{X/R}^{i+2} & \xrightarrow{d} & \cdots \\
        };

        \path[overlay,->, font=\scriptsize]
        (m-1-1) edge node [left]{} (m-2-1)
        (m-1-5) edge node [left]{} (m-2-5)
        (m-1-7) edge node [left]{$\tilde{F}$} (m-2-7)
        (m-1-9) edge node [left]{$\text{mult.}$} (m-2-9)
        (m-1-11) edge node [left]{$\text{mult.}$} (m-2-11);
\end{tikzpicture}
\end{adjustbox}
\end{center}
respectively.
We omitted the subscript in the final diagram for space, and in that diagram the first unlabelled arrows are given by
\begin{align*}
& I_{R,n}\otimes W_{n-1}\Omega_{X/R}^{j}\rightarrow W_{n-1}\Omega_{X/R}^{j} \\
& \ \ \ \ \ \ \ \ \ \ \ \ \ \ \tensor*[^V]{\xi}{}\otimes\omega\mapsto\xi\omega
\end{align*}
\begin{Def*}
The \emph{$i$-th Nygaard complex} of $X/R$ is
\begin{equation*}
\mathcal{N}^{i}W\Omega_{X/R}^{\bullet}:=\varprojlim_{n}\mathcal{N}^{i}W_{n}\Omega_{X/R}^{\bullet}
\end{equation*}
\end{Def*}
Now suppose that $R$ is a ring in which $p$ is nilpotent. Then the hypercohomology of the relative de Rham-Witt complex computes crystalline cohomology
\begin{equation*}
\mathbb{H}^{n}(X,W\Omega_{X/R}^{\bullet})\cong H_{\cris}^{n}(X/W(R))
\end{equation*}
by Theorem \ref{comparison}. Write $\iota_{i},\alpha_{i},F_{i}$ for the maps on hypercohomology induced by the morphisms $\hat{\iota}_{i},\hat{\alpha}_{i},\hat{F}_{i}$. Then the data $(\mathbb{H}^{n}(X,\mathcal{N}^{i}W\Omega_{X/R}^{\bullet}),\iota_{i},\alpha_{i},F_{i})$ is a predisplay structure on $P_{0}=H_{\cris}^{n}(X/W(R))$ and we seek a natural condition on $X$ to ensure that this is a display structure.
\par
Let $A$ be a torsion-free $p$-adic PD-thickening of $R$ and $\mathscr{A}=(A,J,R,\sigma,\dot{\sigma})$ a frame for $R$. Write $A_{n}:=A/p^{n}A$. Suppose that $X/R$ admits a compatible system of smooth and projective liftings $(X_{n}/A_{n})_{n\in\mathbb{N}}$ and suppose moreover that for each $n\geq 0$, $X_{n}$ satisfies the following condition:-
\newline
\begin{align*}
& \text{(LZ1) } \ H^{q}(X_{n},\Omega_{X_{n}/A_{n}}^{p}) \text{ is a locally free finitely generated }A_{n}\text{-module} \\
& \ \ \ \ \ \ \ \ \ \ \text{for all }p,q. \\
& \text{(LZ2) \ The Hodge-de Rham spectral sequence}\\
& \ \ \ \ \ \ \ \ \ \ \ \ \ \ \ \ \ \ \ \ \ \ \ \ \ \ E_{1}^{p,q}=H^{q}(X_{n},\Omega_{X_{n}/A_{n}}^{p})\Rightarrow H_{\dR}^{p+q}(X_{n}/A_{n}) \\
& \ \ \ \ \ \ \ \ \ \ \text{degenerates at }E_{1}.
\end{align*}
\begin{Theorem}\label{Langer-Zink display} 
Let $R$ be a noetherian ring and $p$ be nilpotent in $R$. Then:-
\
\par 
(i) Let $X/R$ be a smooth projective scheme with a with a compatible system of liftings  $(X_{n}/W_{n}(R))_{n\in\mathbb{N}}$ satisfying the analogue of (LZ1) and (LZ2). Then for $m<p$ the data $(P_{i}=\mathbb{H}^{m}(X,\mathcal{N}^{i}W\Omega_{X/R}^{\bullet}),\iota_{i},\alpha_{i}, F_{i})$ is a display-structure $\mathscr{D}_{R}(X)$ on $H_{\mathrm{cris}}^{m}(X/W(R))$. If $R$ is a reduced ring, then we have isomorphisms
\begin{equation*}
P_{i}=\mathbb{H}^{m}(X,\mathcal{N}^{i}W\Omega_{X/R}^{\bullet})\cong H^{m}_{\mathrm{cris}}(X,\mathcal{J}_{X/W(R)}^{[i]})
\end{equation*}
where $\mathcal{J}_{X/W(R)}^{[i]}$ is the $i$-th divided power of the usual crystalline ideal sheaf $\mathcal{J}_{X/W(R)}:=\ker(\mathcal{O}_{X/W(R)}\rightarrow\mathcal{O}_{X})$.
\par
(ii) Let $X/R$ be a smooth projective scheme with a smooth formal lifting  $\mathcal{X}/\text{Spf }A$. For each $n\in\mathbb{N}$ write $X_{n}:=\mathcal{X}\times_{\text{Spf }A}\text{Spec }A_{n}$ and suppose that the system $(X_{n}/A_{n})_{n\in\mathbb{N}}$ satisfies (LZ1) and (LZ2) as above. Then, for $m<p$, there is an $m$-$\mathscr{A}$-window structure (see \cite{LZ07} \S5 for the definition) on $H_{\mathrm{cris}}^{m}(X/A)$, with underlying $A$-modules
\begin{equation*}
P_{i}:=H^{m}_{\mathrm{cris}}(X,\mathcal{J}_{X/A}^{[i]})
\end{equation*}
and divided Frobenius maps $F_{i}:=\frac{1}{p^{i}}F$, where $F$ is the composite 
\begin{equation*}
H_{\mathrm{cris}}^{m}(X/A,\mathcal{J}_{X/A}^{[i]})\rightarrow H_{\mathrm{cris}}^{m}(X/A)\xrightarrow{F} H_{\mathrm{cris}}^{m}(X/A)
\end{equation*}
The base-change of the the $m$-$\mathscr{A}$-window on $H^{m}_{\mathrm{cris}}(X/A)$ along the frame homomorphism $\mathscr{A}\rightarrow\mathscr{W}_{R}$ is the $m$-$\mathscr{W}_{R}$-display $\mathscr{D}_{R}(X)$ of part (i).
\end{Theorem}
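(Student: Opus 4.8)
The plan is to treat part (ii) first, where the hypotheses are cleanest (an honest smooth formal lift $\mathcal{X}/\mathrm{Spf}\,A$ over a torsion-free $p$-adic PD-base), and then to run the parallel argument for part (i) and deduce the last assertion. For (ii) the starting point is the crystalline--de Rham comparison of \cite{BO78}, $H^m_{\cris}(X/A)\cong H^m_{\dR}(\mathcal{X}/A)$, together with its refinement --- via the Poincar\'e lemma for the divided-power de Rham complex --- identifying $P_i:=H^m_{\cris}(X,\mathcal{J}^{[i]}_{X/A})$ with a submodule of $P_0=H^m_{\dR}(\mathcal{X}/A)$. The first place the hypothesis $m<p$ enters is that in this range the filtration of $\mathcal{O}_{X/A}$ by the divided-power ideals $\mathcal{J}^{[i]}$ agrees with the Hodge filtration, so that under (LZ1) and (LZ2) each $\iota_i^{\mathrm{iter}}\colon P_i\to P_0$ is injective with image the Hodge filtration step $\fil^i H^m_{\dR}(\mathcal{X}/A)$, a direct summand whose graded pieces are the locally free modules $H^{m-i}(\mathcal{X},\Omega^i_{\mathcal{X}/A})$. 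The Frobenius lift $\sigma$ on $A$ together with crystalline functoriality produces a $\sigma$-linear endomorphism $F$ of $H^m_{\cris}(X/A)$; because Frobenius carries $\mathcal{J}^{[i]}$ into $p^i\mathcal{O}_{X/A}$ for $i<p$, the restriction of $F$ to $P_i$ is divisible by $p^i$, and since $A$ is torsion-free the divided Frobenius $F_i:=p^{-i}F$ is unambiguously defined.

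With these pieces in hand, the window axioms (the analogues of (P1)--(P3), see \cite{LZ07}, \S5) --- that $\iota_i\circ\alpha_i$ is multiplication, $F_{i+1}\circ\alpha_i=\tilde F_i$, and $F_i\circ\iota_i=pF_{i+1}$ --- are formal consequences of the definitions of $\iota_i$, $\alpha_i$ (induced by $\mathcal{J}^{[i+1]}\hookrightarrow\mathcal{J}^{[i]}$ and by the multiplication $\mathcal{J}\otimes\mathcal{J}^{[i]}\to\mathcal{J}^{[i+1]}$) together with the PD-structure, and I would check them summand by summand as in the proof of the Lemma above. The substantive step is the existence of a normal decomposition. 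Here I would choose a splitting of the Hodge filtration by finitely generated projective $A$-modules $L_0,\dots,L_d$ with $\fil^i=L_i\oplus\cdots\oplus L_d$, lift the $L_i$ into $P_i$ via the isomorphism $P_i\cong\fil^i$, and set $\Phi_i:=F_i|_{L_i}\colon L_i\to P_0$. The claim to establish is that $\Phi_0\oplus\cdots\oplus\Phi_d$ is a $\sigma$-linear isomorphism; since $A$ is $p$-adically complete it suffices to check this modulo $p$, where it becomes the assertion that the divided Frobenius induces an isomorphism on the Hodge-graded pieces of the de Rham cohomology of the special fibre --- a Mazur-type statement on the Frobenius and the Hodge filtration of a smooth proper scheme, valid precisely because $m<p$. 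Granting this, $(L_i,\Phi_i)$ is a standard datum whose associated $m$-$\mathscr{A}$-window is, by construction, isomorphic to $(P_i,\iota_i,\alpha_i,F_i)$.

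Part (i) is parallel but routed through the relative de Rham--Witt complex rather than an honest lift. By Theorem \ref{comparison}, $\mathbb{H}^m(X,W\Omega^\bullet_{X/R})$ computes $H^m_{\cris}(X/W(R))$, and the discussion preceding the theorem already exhibits $(P_i=\mathbb{H}^m(X,\mathcal{N}^iW\Omega^\bullet_{X/R}),\iota_i,\alpha_i,F_i)$ as a $\mathscr{W}_R$-predisplay. To upgrade it to a display I would again produce a normal decomposition, using the properties of the Nygaard complexes recorded in \cite{LZ04}: local freeness of the $P_i$ over $W(R)$, injectivity of the $\iota_i^{\mathrm{iter}}$, and the identification $P_i/I_RP_i\cong\fil^iH^m_{\dR}(X/R)$ coming from the liftings $X_n/W_n(R)$ and the analogues of (LZ1), (LZ2). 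Lifting a splitting of the Hodge filtration of $X/R$ to submodules $L_i\subseteq P_i$ and setting $\Phi_i:=F_i|_{L_i}$, the verification that $\bigoplus_i\Phi_i$ is a $\sigma$-linear isomorphism reduces modulo $I_R$ (which lies in the radical) to the same divided-Frobenius statement on $H^m_{\dR}(X/R)$, again needing $m<p$. When $R$ is reduced, the extra identification $P_i\cong H^m_{\cris}(X,\mathcal{J}^{[i]}_{X/W(R)})$ follows from the comparison between the Nygaard complexes and the divided-power de Rham--Witt complex over a reduced base, for which I would invoke the corresponding result of \cite{LZ04}. Finally, for the last assertion of (ii): the frame homomorphism $\mathscr{A}\to\mathscr{W}_R$ induces a base-change map $H^m_{\cris}(X/A)\otimes_AW(R)\to H^m_{\cris}(X/W(R))$, an isomorphism by crystalline base change, and this map is compatible with the $\iota_i$, $\alpha_i$, $F_i$ and with the two filtrations; hence it identifies the base-changed window $u_\bullet$ of the $m$-$\mathscr{A}$-window with the display $\mathscr{D}_R(X)$.

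The main obstacle, in both parts, is the one non-formal ingredient: showing that the assembled divided Frobenius $\bigoplus_i F_i|_{L_i}$ is a $\sigma$-linear isomorphism. This is exactly where the hypotheses conspire --- (LZ1) and (LZ2) to split the Hodge filtration with finite projective graded pieces, and $m<p$ to guarantee both that the divided-power (respectively Nygaard) filtration coincides with the Hodge filtration and that $p^{-i}F$ is defined and induces an isomorphism on graded pieces. Everything else reduces to standard properties of crystalline cohomology and of the relative de Rham--Witt complex of \cite{LZ04}.
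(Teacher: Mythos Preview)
The paper does not actually prove this theorem: its entire proof is a citation to \cite{GL21}, Theorem 1.1 (with the reduced case attributed to \cite{LZ07}, \S5). So there is no argument in the paper to compare yours against.

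That said, your sketch is a fair outline of what those references do, and you have correctly isolated the one genuinely non-formal step, namely the $\sigma$-linear bijectivity of $\bigoplus_i\Phi_i$ (the Mazur--Ogus type input, which is exactly where $m<p$, (LZ1) and (LZ2) are consumed). A couple of points worth tightening. First, the Nygaard complexes and their identification with $H^m_{\cris}(X,\mathcal{J}^{[i]}_{X/W(R)})$ in the reduced case are in \cite{LZ07}, not \cite{LZ04}; the latter only builds $W\Omega^\bullet_{X/R}$ and proves Theorem~\ref{comparison}. Second, for part (i) over a not-necessarily-reduced $R$, producing the normal decomposition directly from a Hodge splitting and a Nakayama reduction modulo $I_R$ is delicate: $W(R)$ need not be $p$-torsion-free, so ``$p^{-i}F$'' is not a priori available, and the divided Frobenii must be extracted from the Nygaard complexes themselves. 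The route the paper (and \cite{GL21}) actually indicates, in the paragraph following the theorem, is to compare $\mathcal{N}^iW_n\Omega^\bullet_{X/R}$ with the auxiliary complexes $\mathcal{F}^r\Omega^\bullet_{X_n/W_n(R)}$ via \cite{Lan18} and then use the degenerating spectral sequence of \cite[Prop.~3.2]{LZ07} to obtain the splitting $P_r\cong I_RL_0\oplus\cdots\oplus I_RL_{r-1}\oplus L_r\oplus\cdots\oplus L_m$ with $L_i=H^{m-i}(X_\bullet,\Omega^i)$. Alternatively --- and this is the cleanest logical order, which you yourself gesture at --- one proves (ii) over the torsion-free frame $\mathscr{A}$ first and then obtains (i) by base change along $\mathscr{A}\to\mathscr{W}_R$; your independent treatment of (i) is therefore redundant once (ii) and the compatibility are in hand.
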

\begin{proof}
In this generality, this is (\cite{GL21}, 1.1), although it was shown first for reduced rings $R$ in (\cite{LZ07}, \S5). 
\end{proof}
Let $X/R$ be as above and $m<p$. Let $\mathcal{F}^{r}\Omega^{\bullet}_{X_{n}/W_{n}(R)}$ denote the complex 
\begin{equation*}
I_{R}\otimes\mathcal{O}_{X_{n}}\xrightarrow{pd}I_{R}\otimes\Omega^{1}_{X_{n}/W_{n}(R)}\xrightarrow{pd}\cdots\xrightarrow{pd}I_{R}\otimes\Omega^{r-1}_{X_{n}/W_{n}(R)}\xrightarrow{d}\Omega^{r}_{X_{n}/W_{n}(R)}\xrightarrow{d}\cdots
\end{equation*}
Then $\mathcal{F}^{r}\Omega^{\bullet}_{X_{n}/W_{n}(R)}$ and $\mathcal{N}^{i}W_{n}\Omega_{X/R}^{\bullet}$ are quasi-isomorphic for $r<p$ by \cite[Theorem 1.2]{Lan18}, and the proof shows that these quasi-isomorphisms are compatible for varying $n$, so we get a quasi-isomorphism of pro-complexes $\mathcal{F}^{r}\Omega^{\bullet}_{X_{\bullet}/W_{\bullet}(R)}\cong\mathcal{N}^{i}W_{\bullet}\Omega_{X/R}^{\bullet}$. By \cite[Proposition 3.2]{LZ07}, there is a degenerating spectral sequence $E_{1}^{i,j}\Rightarrow\mathbb{H}^{i+j}(X_{n},\mathcal{F}^{r}\Omega^{\bullet}_{X_{n}/W_{n}(R)})$ where
\begin{equation*}
E_{1}^{i,j}=\left\{
\begin{array}{cc}
I_{R}\otimes H^{j}(X_{n},\Omega_{X_{n}/W_{n}(R)}^{i}) & \text{for }i<r \\
H^{j}(X_{n},\Omega_{X_{n}/W_{n}(R)}^{i}) & \text{for }i\geq r
\end{array}\right.
\end{equation*}
which induces an isomorphism 
\begin{equation*}
P_{r}=\mathbb{H}^{m}(X,\mathcal{F}^{r}\Omega^{\bullet}_{X_{\bullet}/W_{\bullet}(R)})\cong I_{R}L_{0}\oplus I_{R}L_{1}\oplus\cdots\oplus I_{R}L_{r-1}\oplus L_{r}\oplus\cdots\oplus L_{m}
\end{equation*}
where $L_{i}=H^{m-i}(X,\Omega^{i}_{X_{\bullet}/W_{\bullet}(R)})$. These are the $L_{i}$ appearing in a standard datum of the $\mathcal{W}_{R}$-display $\mathscr{D}_{R}(X)$ on $H^{m}_{\mathrm{cris}}(X/W(R))$. In particular, we see that the Hodge filtration of $\mathscr{D}_{R}(X)$ is identified with the Hodge filtration of $X$:-
\begin{equation*}
\text{Fil}^{\bullet}\mathscr{D}_{R}(X)=\text{Fil}^{\bullet}H_{\dR}^{m}(X/R).
\end{equation*}
\section{Crystals of relative displays}\label{Crystals of relative displays section}
Let $k$ be a perfect field of characteristic $p>0$ and $W=W(k)$. Let $R\in\underline{\mathcal{A}rt}_{W,k}$, where we write $\underline{\mathcal{A}rt}_{W,k}$ for the category of local artinian $W$-algebras with residue field $k$. Instructive examples of objects in $\underline{\mathcal{A}rt}_{W,k}$ are $W_{n}(k)$ and $k[t]/(t^{n})$. Let $X_{0}$ be a smooth projective variety with smooth versal deformation space $\mathfrak{S}$ and versal family $\mathfrak{X}/\mathfrak{S}$. Then $\mathfrak{S}\cong\text{Spf }A$, where $A=W(k)\llbracket t_{1},\ldots,t_{h}\rrbracket$ and $h=\dim_{k}H^{1}(X_{0},\mathcal{T}_{X_{0}/k})$. Suppose that for $f:X\rightarrow\text{Spec }R$ a deformation of $X_{0}/k$, the following two conditions analogous to (LZ1) and (LZ2) of Section \ref{display on crystalline} hold:-
\newline
\begin{align*}
& \bullet \ R^{q}f_{\ast}\Omega_{X/R}^{p} \text{ is a free }R\text{-module and commutes with base-change }R\rightarrow R' \\
& \ \ \ \text{ in }\underline{\mathcal{A}rt}_{W,k}\text{, for all }p,q. \\
& \bullet \ \text{The spectral sequence}\\
& \ \ \ \ \ \ \ \ \ \ \ \ \ \ \ \ \ \ \ \ \ \ \ \ \ \ E_{1}^{p,q}=R^{q}f_{\ast}\Omega_{X/R}^{p}\Rightarrow\mathbb{R}^{p+q}f_{\ast}\Omega_{X/R}^{\bullet} \\
& \ \ \ \ \ \ \ \ \ \ \text{degenerates at }E_{1}.
\end{align*}

Consider the Frobenius $\sigma:A\rightarrow A$ given by $\sigma=\tensor*[^{F}]{}{}$ on $W(k)$ and by $T_{i}\mapsto T_{i}^{p}$. Set $C_{m}:=A/(T_{1}^{m},\cdots,T_{h}^{m})$ and $R_{m}:=C_{m}/p^{m}C_{m}$, and write $\sigma:C_{m}\rightarrow C_{m}$ for the endomorphism induced by $\sigma$. Then $(C_{m},p^{m}C_{m},R_{m},\sigma,\sigma/p)$ is a frame for $R_{m}$. So, by Theorem \ref{Langer-Zink display} we have a $n$-$\mathscr{W}_{R_{m}}$-display $\mathscr{D}_{R_{m}}(\mathfrak{X}_{R_{m}})$ on $H_{\cris}^{n}(\mathfrak{X}_{R_{m}}/W(R_{m}))$, for $n<p$. For $m$ large enough, there is a homomorphism $R_{m}\rightarrow R$, and hence a frame homomorphism $\mathscr{W}_{R_{m}}\rightarrow\mathscr{W}_{R}$. Base-changing along this frame homomorphism gives the $n$-$\mathscr{W}_{R}$-display structure $\mathscr{D}_{R}(X)$ on $H_{\cris}^{n}(X/W(R))$ on the deformation $X/R$ of $X_{0}/k$ corresponding to $\text{Spec }R\rightarrow\mathfrak{S}$.  
\begin{Theorem}\label{crystal}
Suppose that $X'/S$ is a deformation of $X/R$ over a PD-thickening $S\twoheadrightarrow R$. Write 
\begin{equation*}
u:\mathcal{W}_{S}\rightarrow\mathcal{W}_{S/R}
\end{equation*} 
for the frame homomorphism. Then $u_{\bullet}\mathscr{D}_{S}(X')$ depends only on $X$ and $S$. 
\end{Theorem}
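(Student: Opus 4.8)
The plan is to unwind the construction of $\mathscr{D}_S(X')$ in \S\ref{Crystals of relative displays section}, pinpoint exactly where the lift $X'$ enters, and check that applying $u_\bullet$ washes out that dependence; the essential input will be the crystal property of relative displays established in \cite{GL21}.

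First I would fix an integer $m$ large enough that the homomorphism $A\to S$ classifying $X'$ kills $(T_1^m,\dots,T_h^m)$ and that $p^m=0$ in $S$, so that $A\to S$ factors as $A\to C_m\to R_m\xrightarrow{g_{X'}}S$ with the composite $R_m\xrightarrow{g_{X'}}S\twoheadrightarrow R$ equal to the fixed map $R_m\to R$ classifying $X$. By construction $\mathscr{D}_S(X')$ is the base change, along the frame homomorphism $\iota_{X'}\colon\mathscr{W}_{R_m}\to\mathscr{W}_S$ attached to $g_{X'}$, of the $\mathscr{W}_{R_m}$-display $\mathscr{D}_{R_m}(\mathfrak{X}_{R_m})$ of the versal family, and $\mathscr{D}_{R_m}(\mathfrak{X}_{R_m})$ does not depend on $X$ or $X'$. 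Since $u$ is the identity on $W(S)$ and merely enlarges the frame ideal from $I_S$ to $\mathcal{J}$, functoriality of base change gives
\begin{equation*}
u_\bullet\mathscr{D}_S(X')\;\cong\;(u\circ\iota_{X'})_\bullet\,\mathscr{D}_{R_m}(\mathfrak{X}_{R_m}),
\end{equation*}
so it suffices to show that the base-change functor determined by $u\circ\iota_{X'}\colon\mathscr{W}_{R_m}\to\mathscr{W}_{S/R}$ is, up to canonical isomorphism, independent of $X'$.

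Next I would compare two lifts. If $X''$ is another lift of $X$ over $S$, with classifying map $g_{X''}\colon R_m\to S$, then $g_{X'}$ and $g_{X''}$ are two liftings of the same map $R_m\to R$ along the PD-surjection $S\twoheadrightarrow R$ and hence agree modulo $\mathfrak{a}$; applying $W(-)$ and composing with $W(S)\to W(R)$ shows $W(g_{X'})$ and $W(g_{X''})$ agree modulo $W(\mathfrak{a})$, and since $W(\mathfrak{a})\subseteq\mathcal{J}=\tilde{\mathfrak{a}}\oplus I_S$ the frame homomorphisms $u\circ\iota_{X'}$ and $u\circ\iota_{X''}$ agree modulo $\mathcal{J}$ (they remain $\sigma$- and $\dot{\sigma}$-compatible on $\mathscr{W}_{S/R}$ because $\dot{\sigma}$ kills $\tilde{\mathfrak{a}}$).

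The crux is then a rigidity statement: two frame homomorphisms $\mathscr{W}_{R_m}\to\mathscr{W}_{S/R}$ that agree modulo the PD-ideal $\mathcal{J}$ induce canonically isomorphic base-change functors on displays. This is precisely the assertion that the display structures on crystalline cohomology assemble into a crystal of relative displays, which is the content of \cite{GL21} (extending \cite{LZ15}); conceptually it is the display-theoretic shadow of the insensitivity of crystalline cohomology — together with its Hodge-filtered de Rham--Witt, equivalently Nygaard, refinement — to a PD-deformation of the base, the point being that the divided Frobenius maps $F_i$ of a $\mathscr{W}_{S/R}$-display are built from $\dot{\sigma}$, which annihilates $\tilde{\mathfrak{a}}$. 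Granting this, $u\circ\iota_{X'}$ and $u\circ\iota_{X''}$ give canonically isomorphic functors, hence a canonical isomorphism $u_\bullet\mathscr{D}_S(X')\cong u_\bullet\mathscr{D}_S(X'')$; these isomorphisms satisfy the cocycle condition for three lifts, so $u_\bullet\mathscr{D}_S(X')$ is well defined as a $\mathscr{W}_{S/R}$-display depending only on the fixed map $R_m\to R$ — that is, on $X$ — and on $S$ (independence of the auxiliary $m$ following from the compatibility of the constructions of \S\ref{Crystals of relative displays section} for varying $m$). I expect the main obstacle to be this last step: one must not merely observe that the two displays are abstractly isomorphic but genuinely produce the canonical comparison isomorphisms and check their coherence — which is exactly the work carried out in \cite{GL21}. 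A self-contained argument would instead exhibit $u_\bullet\mathscr{D}_S(X')$ directly in crystalline terms over $W(S)$ relative to $R$, in a form that no longer involves the chosen lift.
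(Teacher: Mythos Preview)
Your proposal is correct and ultimately takes the same approach as the paper: both defer the actual content to \cite{GL21}, Theorem~1.2. The paper's proof is a one-line citation, whereas you have supplied the surrounding scaffolding (factoring through the versal family, reducing to a rigidity statement for two frame homomorphisms to $\mathscr{W}_{S/R}$ agreeing modulo $\mathcal{J}$) that explains \emph{why} that citation is the right one, but at the crux you yourself acknowledge that producing the canonical comparison isomorphism and its cocycle coherence ``is exactly the work carried out in \cite{GL21}'' --- so there is no genuine methodological difference.
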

\begin{proof}
This is (\cite{GL21}. Theorem 1.2).
\end{proof}
\begin{Def*}
We define $\mathscr{D}_{S/R}(X):=u_{\bullet}\mathscr{D}_{S}(X')$, where $X'$ is any deformation over $\mathrm{Spec} \ S$ of $X$. 
\end{Def*}
Then Theorem \ref{crystal} says that the collection of $\mathscr{D}_{S/R}(X)$ for all PD-thickenings $S\twoheadrightarrow R$ defines a crystal of relative displays.

The rest of (\cite{GL21}, Thm. 1.2) proves that, much like $\mathscr{D}_{R}(X)$, the relative display $\mathscr{D}_{S/R}(X)$ has an explicit description in terms of a relative version of the Nygaard complexes. Indeed, let $\mathcal{N}_{rel/R}^{i}W\Omega_{X'/S}^{\bullet}$ be defined as follows
\begin{align*}
W\mathcal{O}_{X'}\oplus\tilde{\mathfrak{a}}^{i}W\mathcal{O}_{X'}\xrightarrow{d\oplus d}
& W\Omega_{X'/S}^{1}\oplus\tilde{\mathfrak{a}}^{i-1}W\Omega_{X'/S}^{1}\xrightarrow{d\oplus d}\cdots \\
& \cdots\xrightarrow{d\oplus d}W\Omega_{X'/S}^{i-1}\oplus\tilde{\mathfrak{a}}W\Omega_{X'/S}^{i-1}\xrightarrow{dV+d}W\Omega_{X'/S}^{i}\xrightarrow{d}\cdots
\end{align*}
where $\tilde{\mathfrak{a}}:=\log^{-1}(\mathfrak{a})$ is the logarithmic Teichm\"{u}ller ideal of $\mathfrak{a}:=\ker(S\twoheadrightarrow R)$. Then it is shown that, under the analogous conditions (LZ1) and (LZ2) for $X'$, the underlying $W(S)$-modules of $\mathscr{D}_{S/R}(X)$ are 
\begin{equation*}
\tilde{P}_{i}=\mathbb{H}^{n}(X',\mathcal{N}_{rel/R}^{i}W\Omega_{X'/S}^{\bullet})
\end{equation*}
The maps $F_{i},\iota_{i},\alpha_{i}$ are induced by the obvious morphisms of complexes
\begin{align*}
& \hat{\alpha}_{i}:\mathcal{J}\otimes\mathcal{N}_{rel/R}^{i}W\Omega_{X'/S}^{\bullet}\rightarrow\mathcal{N}_{rel/R}^{i+1}W\Omega_{X'/S}^{\bullet} \\
& \hat{\iota}_{i}:\mathcal{N}_{rel/R}^{i+1}W\Omega_{X'/S}^{\bullet}\rightarrow\mathcal{N}_{rel/R}^{i}W\Omega_{X'/S}^{\bullet} \\
& \hat{F}_{i}:\mathcal{N}_{rel/R}^{i}W\Omega_{X'/S}^{\bullet}\rightarrow W\Omega_{X'/S}^{\bullet}
\end{align*}
where multiplication by $p$ on $W\Omega_{X'/S}^{j}$ is replaced by the map
\begin{equation*}
\pi:W\Omega_{X'/S}^{j}\oplus\tilde{\mathfrak{a}}^{i-j}W\Omega_{X'/S}^{j}\rightarrow W\Omega_{X'/S}^{j}\oplus\tilde{\mathfrak{a}}^{i-j-1}W\Omega_{X'/S}^{j}
\end{equation*}
which is multiplication by $p$ on $W\Omega_{X'/S}^{j}$ and the inclusion on the other summand. The divided Frobenius $\hat{F}_{r}$ is defined on the subcomplex $\mathcal{N}^{i}W\Omega_{X'/S}^{\bullet}$ as before and on $\tilde{\mathfrak{a}}^{i-j}W\Omega_{X'/S}^{j}$ it is defined to be the zero map.

It is then easy to see from this explicit description in terms of hypercohomology of relative Nygaard complexes that the Hodge filtration of the relative display $\mathscr{D}_{S/R}(X)$ coincides with the Hodge filtration of $X$:-
\begin{equation*}
\text{Fil}^{\bullet}\mathscr{D}_{S/R}(X)=\text{Fil}^{\bullet}H_{\dR}^{n}(X/R).
\end{equation*}

\section{Deformation theory of Calabi-Yau threefolds}\label{Deformation theory of Calabi-Yau threefolds section}
Let $k$ be a perfect field of characteristic $p>0$ and $W=W(k)$. Let $X_{0}$ be a Calabi-Yau threefold over $\mathrm{Spec} \ k$, i.e. a smooth projective $k$-variety with trivial canonical bundle and $H^{1}(X_{0},\mathcal{O}_{X_{0}})=H^{2}(X_{0},\mathcal{O}_{X_{0}})=0$. 
\par
There are two immediate consequences of the definition. First, triviality of the canonical bundle gives the following identification of the tangent sheaf
\begin{equation*}
\Omega_{X_{0}/k}^{2}\cong\Omega_{X_{0}/k}^{3}\wedge\Omega_{X_{0}/k}^{-1}\cong\mathcal{O}_{X_{0}}\wedge\Omega_{X_{0}/k}^{-1}\cong\mathcal{T}_{X_{0}/k}
\end{equation*}
Second, Serre duality along with triviality of the canonical bundle shows that the Hodge diamond of $X_{0}$ is as follows:-
\begin{center}
\begin{tikzpicture}[descr/.style={fill=white,inner sep=1.5pt}]
        \matrix (m) [
            matrix of math nodes,
            row sep=0.2em,
            column sep=0.5em,
            text height=1.5ex, text depth=0.25ex
        ]
        { & & & 1 & & & \\
          & & h^{1,0} &  & 0 & & \\
           & h^{2,0} & & h^{1,1} & & 0 & \\
          1 & & h^{1,2} &  & h^{1,2} &  & 1 \\
          & 0 & & h^{1,1} & & h^{2,0} & \\
          & & 0 &  & h^{1,0} & & \\
          & & & 1 & & & \\
        };

\end{tikzpicture} 
\end{center}
where $h^{i,j}:=\dim_{k}H^{j}(X_{0},\Omega_{X_{0}/k}^{j})$. One should note that Hodge symmetry (i.e. $h^{i,j}=h^{j,i}$) fails in positive characteristic, in general. It is clear that the Calabi-Yau threefold $X_{0}$ satisfies Hodge symmetry if and only if $H^{0}(X_{0},\Omega_{X_{0}/k}^{1})=H^{0}(X_{0},\mathcal{T}_{X_{0}/k})=0$. 

We make the assumption that $X_{0}$ has a smooth versal deformation space $\mathfrak{S}$. Then $\mathfrak{S}=\mathrm{Spf}\,A$ where $A=W\llbracket t_{1},\ldots,t_{h}\rrbracket$ and $h=h^{2,1}$. Moreover we assume that the Hodge-de Rham spectral sequence
\begin{equation*}
E_{1}^{i,j}=H^{j}(\mathfrak{X},\Omega_{\mathfrak{X}/A}^{i})\Rightarrow H_{\mathrm{dR}}^{i+j}(\mathfrak{X}/A)
\end{equation*}
degenerates at $E_{1}$ and the $E_{1}$-terms are free $\mathcal{O}_{\mathfrak{S}}$-modules. Note that this implies that the Hodge and de Rham cohomology computes with arbitrary base change.

\begin{Remark}\label{remark}\emph{
We point out that the assumption of the smoothness of the versal deformation space is non-trivial. Indeed, Calabi-Yau threefolds in positive characteristic that have obstructed deformations are constructed in \cite{CvS09}, \cite{Hir99}, \cite{HIS07}, \cite{HIS08}, \cite{Sch09} and \cite{Sch04}. Such examples are rather rare however, and Calabi-Yau threefolds having unobstructed deformations should in fact be ``generic''. Rather than formulate this in a precise sense, let us instead point out that a special case of \cite[Theorem 4.1]{Sch03} shows that if $X_{0}$ admits a smooth proper formal lifting to characteristic zero and has torsion-free crystalline cohomology, then the deformation theory of $X_{0}$ is unobstructed (also see \cite[Theorem C]{ESB05} for a related criterion). It is conjectured in \cite[Conjecture 7.13]{Jos20} that a Calabi-Yau threefold $X_{0}$ over an algebraically closed field $k$ of characteristic $p$ lifts to characteristic zero if and only if $h^{1,0}=0$ and $h_{W}^{1,2}\geq 0$. The first condition is equivalent to $H_{\mathrm{cris}}^{2}(X_{0}/W(k))$ being torsion-free, and is also equivalent to $\mathrm{Pic}(X_{0})$ having no $p$-torsion \cite[Theorem 18]{Tak17}. The non-negativity of the Hodge-Witt number in the second condition is equivalent to $H_{\mathrm{cris}}^{3}(X_{0}/W(k))$ being torsion-free by \cite[Theorem 7.4]{Jos20}. Note that the Artin-Mazur formal group of $X_{0}$ has finite height if and only if $X_{0}$ is Hodge-Witt \cite[Theorem 6.1]{Jos07}, and $X_{0}$ being Hodge-Witt implies $h_{W}^{1,2}\geq 0$ \cite[Theorem 7.4]{Jos20}. If we admit Joshi's conjecture, therefore, we see that any Calabi-Yau threefold with finite height and $p$-torsion-free $\mathrm{Pic}(X_{0})$ will have unobstructed deformations (at least if we work over an algebraically closed field).} 

\emph{Let us point out that one can easily construct examples satisfying the conditions of Schr\"{o}er's theorem by the following standard argument: Let $X/\mathbb{C}$ be a complex Calabi-Yau threefold. By spreading-out we can assume that $X$ is the base-change of a smooth projective family $f:\mathcal{X}\rightarrow S$ where $S$ is an integral flat $\mathbb{Z}$-scheme of finite type. By shrinking $S$ if necessary, we may assume that the sheaves $R^{p}f_{\ast}\Omega_{\mathcal{X}/S}^{p}$ and $\mathbb{R}f_{\ast}\Omega_{\mathcal{X}/S}^{\bullet}$ are locally free of finite type, and hence commute with base change. Then any fibre over a closed point in the smooth locus of $f$ will be a Calabi-Yau threefold $X_{0}$ with the desired properties; $X_{0}$ lifts by definition and the crystalline cohomology is torsion-free by the comparison isomorphism with de Rham cohomology and that $\mathbb{R}f_{\ast}\Omega_{\mathcal{X}/S}^{\bullet}$ is locally free.}

\emph{The assumptions of Hodge-de Rham degeneration and freeness of the Hodge cohomology groups are also non-trivial. Note that they satisfied for families of smooth complete intersections by \cite[Th\'{e}or\`{e}me 1.5]{Del73}. Since the deformations of a complete intersection are unobstructed (see e.g. \cite[Theorem 9.4]{Har10}),  complete intersection Calabi-Yau threefolds (so-called CICY's) satisfy all of our hypotheses. Note that $h^{2,0}=0$ for a CICY by \cite[Th\'{e}orem\`{e} 2.3]{Del73}, so in this case the deformation space $\mathfrak{S}$ is universal.}

\end{Remark}

Write $\mathfrak{X}\rightarrow\mathfrak{S}$ for the versal family and let $\mathfrak{X}_{n}:=\mathfrak{X}\times_{A}A/p^{n+1}A$ for $n\geq 0$. Then
\begin{equation*}
H^{i}(\mathfrak{X}_{0},\mathcal{O}_{\mathfrak{X}_{0}})\otimes_{A}k=H^{i}(X_{0},\mathcal{O}_{X_{0}})=0
\end{equation*}
and therefore $H^{1}(\mathfrak{X}_{0},\mathcal{O}_{\mathfrak{X}_{0}})=H^{2}(\mathfrak{X}_{0},\mathcal{O}_{\mathfrak{X}_{0}})=0$ by Nakayama's Lemma. From the long exact sequence on cohomology we have
\begin{equation*}
H^{1}(\mathfrak{X}_{0},\mathcal{O}_{\mathfrak{X}_{0}})\rightarrow H^{1}(\mathfrak{X}_{n+1},\mathcal{O}_{\mathfrak{X}_{n+1}}^{\ast})\rightarrow H^{1}(\mathfrak{X}_{n},\mathcal{O}_{\mathfrak{X}_{n}}^{\ast})\rightarrow H^{2}(\mathfrak{X}_{0},\mathcal{O}_{\mathfrak{X}_{0}})
\end{equation*}
and we conclude that the natural restriction $\text{Pic}(X_{n+1})\rightarrow\text{Pic}(X_{n})$ is an isomorphism. For each $n$ let $\omega_{X_{n}}$ be a $3$-form. Since ${\omega_{X_{n+1}}|}_{X_{n}}\simeq\omega_{X_{n}}$, induction gives $\Omega^{3}_{\mathfrak{X}/A}\simeq\mathcal{O}_{\mathfrak{X}}$, and so we see that $\mathcal{T}_{\mathfrak{X}/\mathfrak{S}}\simeq\Omega^{2}_{\mathfrak{X}/A}$ on $\mathfrak{X}$.  
\par
We denote by $\fil^{i}H_{\dR}^{n}(\mathfrak{X}/A)$ the $i^{\text{th}}$-step in the Hodge filtration of $H_{\dR}^{n}(\mathfrak{X}/A)$, and write $\text{gr}^{i}H_{\dR}^{n}(\mathfrak{X}/A)$ for the subquotients of the filtration, i.e.
\begin{equation*}
\text{gr}^{i}H_{\dR}^{n}(\mathfrak{X}/A):=\frac{\fil^{i}H_{\dR}^{n}(\mathfrak{X}/A)}{\fil^{i+1}H_{\dR}^{n}(\mathfrak{X}/A)}
\end{equation*}
Our assumption on that the Hodge-de Rham spectral sequence degenerates implies that we have isomorphisms
\begin{equation*}
H^{n-i}(\mathfrak{X},\Omega_{\mathfrak{X}/A}^{i})\xrightarrow{\sim}\text{gr}^{i}H_{\dR}^{n}(\mathfrak{X}/A)
\end{equation*} 
for all $i,n$. Consider the Gauss-Manin connection 
\begin{equation*}
\nabla:H_{\dR}^{n}(\mathfrak{X}/A)\rightarrow H_{\dR}^{n}(\mathfrak{X}/A)\otimes_{A}\Omega^{1}_{A/W}
\end{equation*}
Then Griffiths transversality states that
\begin{equation*}
\nabla\left(\fil^{i}H_{\dR}^{n}(\mathfrak{X}/A)\right)\subseteq\fil^{i-1}H_{\dR}^{n}(\mathfrak{X}/A)\otimes_{A}\Omega^{1}_{A/W}
\end{equation*}
and hence $\nabla$ induces an $A$-linear map:-
\begin{equation*}
\text{gr}^{i}\nabla:\text{gr}^{i}H_{\dR}^{n}(\mathfrak{X}/A)\rightarrow\text{gr}^{i-1}H_{\dR}^{n}(\mathfrak{X}/A)\otimes_{A}\Omega^{1}_{A/W}
\end{equation*}
Taking duals gives an $A$-linear map
\begin{equation*}
\mathcal{T}_{\mathfrak{S}/W}\rightarrow\text{Hom}_{A}\left(\text{gr}^{i}H_{\dR}^{n}(\mathfrak{X}/A),\text{gr}^{i-1}H_{\dR}^{n}(\mathfrak{X}/A)\right)
\end{equation*}
In particular, putting $n=i=3$ gives
\begin{equation}\label{3}
\mathcal{T}_{\mathfrak{S}/W}\rightarrow\text{Hom}_{A}\left(H^{0}(\mathfrak{X},\mathcal{O}_{\mathfrak{X}}),H^{1}(\mathfrak{X},\mathcal{T}_{\mathfrak{X}/\mathfrak{S}})\right)
\end{equation}
\begin{Proposition}\label{Kodaira-Spencer}
The map (\ref{3}) is surjective.
\end{Proposition}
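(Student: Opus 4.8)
The plan is to identify the map (\ref{3}) with the Kodaira--Spencer map of the versal family $f\colon\mathfrak{X}\to\mathfrak{S}$ and then to deduce its surjectivity from versality together with the standing freeness hypotheses, via Nakayama's lemma. So (\ref{3}) does not need displays at all; it is a statement about the Gauss--Manin connection of the versal family.

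First I would rewrite the source and target of (\ref{3}) using the trivialisation $\omega_{\mathfrak{X}}$ of the canonical bundle fixed above. Since $H^{0}(X_{0},\mathcal{O}_{X_{0}})=k$ and $R^{0}f_{\ast}\mathcal{O}_{\mathfrak{X}}$ is free and commutes with base change, $H^{0}(\mathfrak{X},\mathcal{O}_{\mathfrak{X}})=A$ is free of rank one, so $\text{gr}^{3}H_{\dR}^{3}(\mathfrak{X}/A)\cong H^{0}(\mathfrak{X},\Omega_{\mathfrak{X}/A}^{3})\cong H^{0}(\mathfrak{X},\mathcal{O}_{\mathfrak{X}})=A$ via $\omega_{\mathfrak{X}}$, and evaluation at $1\in A$ identifies $\text{Hom}_{A}\bigl(H^{0}(\mathfrak{X},\mathcal{O}_{\mathfrak{X}}),H^{1}(\mathfrak{X},\mathcal{T}_{\mathfrak{X}/\mathfrak{S}})\bigr)$ with $H^{1}(\mathfrak{X},\mathcal{T}_{\mathfrak{X}/\mathfrak{S}})$. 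Under these identifications I claim (\ref{3}) becomes the Kodaira--Spencer map $\kappa^{\flat}\colon\mathcal{T}_{\mathfrak{S}/W}\to H^{1}(\mathfrak{X},\mathcal{T}_{\mathfrak{X}/\mathfrak{S}})$. This is the standard cup-product description of the graded Gauss--Manin connection: if $\kappa\in H^{1}(\mathfrak{X},\mathcal{T}_{\mathfrak{X}/\mathfrak{S}})\otimes_{A}\Omega_{A/W}^{1}$ denotes the Kodaira--Spencer class, then $\text{gr}^{i}\nabla$ is, up to sign, cup product with $\kappa$ followed by interior multiplication $\mathcal{T}_{\mathfrak{X}/\mathfrak{S}}\otimes\Omega_{\mathfrak{X}/A}^{i}\to\Omega_{\mathfrak{X}/A}^{i-1}$. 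For $i=3$, contracting a $3$-form against a vector field and then using $\omega_{\mathfrak{X}}$ to identify $\Omega_{\mathfrak{X}/A}^{3}\cong\mathcal{O}_{\mathfrak{X}}$ and $\Omega_{\mathfrak{X}/A}^{2}\cong\mathcal{T}_{\mathfrak{X}/\mathfrak{S}}$ turns this interior multiplication into the canonical isomorphism $\mathcal{T}_{\mathfrak{X}/\mathfrak{S}}\otimes\mathcal{O}_{\mathfrak{X}}\cong\mathcal{T}_{\mathfrak{X}/\mathfrak{S}}$; hence $\text{gr}^{3}\nabla\colon A\to H^{1}(\mathfrak{X},\mathcal{T}_{\mathfrak{X}/\mathfrak{S}})\otimes_{A}\Omega_{A/W}^{1}$ sends $1$ to $\kappa$, and its $A$-dual, which is (\ref{3}), sends $\partial$ to $\langle\kappa,\partial\rangle=\kappa^{\flat}(\partial)$. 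Sign ambiguities are irrelevant for surjectivity.

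It then remains to prove that $\kappa^{\flat}$ is surjective. Both $\mathcal{T}_{\mathfrak{S}/W}=\text{Hom}_{A}(\Omega_{A/W}^{1},A)$ and $H^{1}(\mathfrak{X},\mathcal{T}_{\mathfrak{X}/\mathfrak{S}})=H^{1}(\mathfrak{X},\Omega_{\mathfrak{X}/A}^{2})$ are finite free $A$-modules of rank $h=h^{2,1}=\dim_{k}H^{1}(X_{0},\mathcal{T}_{X_{0}/k})$ — the former because $A=W\llbracket t_{1},\ldots,t_{h}\rrbracket$, the latter by the standing freeness and base-change hypothesis on the Hodge cohomology. Reducing modulo the maximal ideal $\mathfrak{m}_{A}$, the map $\kappa^{\flat}\otimes_{A}k$ is the Kodaira--Spencer map $T_{0}(\mathfrak{S}\times_{W}k)\to H^{1}(X_{0},\mathcal{T}_{X_{0}/k})$ of the central fibre; since $\mathfrak{S}$ is a smooth versal deformation space of $X_{0}$ of dimension $h$, this map is surjective by versality, hence bijective by the dimension count. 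By Nakayama's lemma $\kappa^{\flat}$ is therefore surjective (in fact an isomorphism of free $A$-modules), and so (\ref{3}) is surjective.

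The main obstacle is the first step: matching (\ref{3}) precisely with $\kappa^{\flat}$. This rests on the cup-product formula for the graded Gauss--Manin connection and, above all, on carefully tracking the two trivialisations — that of $\Omega_{\mathfrak{X}/A}^{3}$ and the induced one of $\Omega_{\mathfrak{X}/A}^{2}\cong\mathcal{T}_{\mathfrak{X}/\mathfrak{S}}$ — through the interior-multiplication pairing, so as to see it collapse to the identity on $\mathcal{T}_{\mathfrak{X}/\mathfrak{S}}$. Once this identification is in place, surjectivity is a formal consequence of versality together with the freeness and Hodge--de Rham degeneration hypotheses imposed on the family.
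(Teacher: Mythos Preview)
Your proposal is correct and follows essentially the same approach as the paper: factor (\ref{3}) through the Kodaira--Spencer map using a generator of $H^{0}(\mathfrak{X},\mathcal{O}_{\mathfrak{X}})\cong A$, then deduce surjectivity from versality. The only cosmetic difference is that the paper asserts surjectivity of the Kodaira--Spencer map over $A$ directly from versality, whereas you reduce modulo $\mathfrak{m}_{A}$ and apply Nakayama; and the paper phrases the identification of the target as ``cup-product is a perfect pairing'' rather than your more explicit tracking of the interior-multiplication through the trivialisation $\omega_{\mathfrak{X}}$.
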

\begin{proof}
This is essentially the derivative of the period map. It is well-known (e.g. \cite{Voi02}, 10.4) that the map (\ref{3}) is the composition of the cup-product with the Kodaira-Spencer map:-
\begin{equation*}
\text{KS}(\mathfrak{X}/\mathfrak{S}):\mathcal{T}_{\mathfrak{S}/W}\rightarrow H^{1}(\mathfrak{X},\mathcal{T}_{\mathfrak{X}/\mathfrak{S}})
\end{equation*}
Since the formal deformation is versal, $\text{Kod}(\mathfrak{X}/\mathfrak{S})$ is a surjection. Thus we are in the following situation
\begin{center}
\begin{tikzpicture}
 \node (a) {$\mathcal{T}_{\mathfrak{S}/W}$};
\node (b) [right =3cm of a]{$H^{1}(\mathfrak{X},\mathcal{T}_{\mathfrak{X}/\mathfrak{S}})$};
\node (c) [below =1.5cm of b]{$\text{Hom}_{A}\left(H^{0}(\mathfrak{X},\mathcal{O}_{\mathfrak{X}}),H^{1}(\mathfrak{X},\mathcal{T}_{\mathfrak{X}/\mathfrak{S}})\right)$};
  \draw[->>](a) to node [above]{$\text{KS}(\mathfrak{X}/\mathfrak{S})$}(b);
\draw[->](b) to node {}(c);
\draw[->](a) to node {}(c);
\end{tikzpicture}
\end{center} 
where the right-hand arrow is given by cup-product. But choosing a generator $\omega\in H^{0}(\mathfrak{X},\mathcal{O}_{\mathfrak{X}})$ and using that cup-product is a perfect pairing shows that this is surjective. 
\end{proof}
Composing $\nabla$ with the natural maps
\begin{equation*}
\partial/\partial t_{i}:\Omega^{1}_{A/W}\rightarrow A \ \ \ \ \ (i=1,\ldots,h)
\end{equation*}
gives maps 
\begin{equation*}
\nabla_{i}:H^{3}_{\mathrm{dR}}(\mathfrak{X}/A)\rightarrow H^{3}_{\mathrm{dR}}(\mathfrak{X}/A)
\end{equation*}
and Proposition \ref{Kodaira-Spencer} shows that $\{\nabla_{1}(\omega),\ldots,\nabla_{h}(\omega)\}$ generates $H^{1}(\mathfrak{X},\mathcal{T}_{\mathfrak{X}/\mathfrak{S}})$ as an $A$-module, for $\omega\in H^{0}(\mathfrak{X},\mathcal{O}_{\mathfrak{X}})$.
\newline
\par
We will now describe the deformation theory of $X_{0}$ over nilpotent PD-thickenings. Let $\alpha:(S,\mathfrak{m}_{S})\twoheadrightarrow(R,\mathfrak{m}_{R})$ be a surjection in $\underline{\mathcal{A}rt}_{W,k}$ with $\mathfrak{a}\mathfrak{m}_{S}=0$, where $\mathfrak{a}=\ker\alpha$ (a so-called \emph{`small surjection'}).  Let $X$ be a lifting of $X_{0}/k$ to $R$. For now, endow $\alpha$ with the trivial PD-structure.
and let $X',Y$ be deformations of $X/R$ over $\text{Spec }S$. Then, since $\mathfrak{X}\rightarrow\mathfrak{S}=\text{Spf }A$ is the versal family, $X'$ and $Y$ are given by $W$-algebra homomorphisms $f,g:A\rightarrow S$ respectively, and we have a commutative diagram:-
\begin{center}
\begin{tikzpicture}
 \node (a) {$A$};
\node (b) [right =1cm of a]{$S$};
\node (c) [right =1cm of b]{$R$};
\path[->]
([yshift= 2pt]a.east) edge node[above] {$f$} ([yshift= 2pt]b.west)
([yshift= -2pt]a.east) edge node[below] {$g$} ([yshift= -2pt]b.west);
 \draw[->>](b) to node [above]{$\alpha$}(c);
\end{tikzpicture}
\end{center} 
Given $u\in H_{\dR}^{3}(X'/S)$, let $\tilde{u}\in H_{\dR}^{3}(\mathfrak{X}/A)$ be such that $f^{\ast}(\tilde{u})=u$, and set $v=g^{\ast}(\tilde{u})$. Then the Gauss-Manin connection induces an isomorphism 
\begin{align*}\tag{4}\label{4}
& \Psi:H_{\dR}^{3}(X'/S)\xrightarrow{\sim}H_{\dR}^{3}(Y/S)\\
& \ \ \ \ \ \ \ \ u\mapsto v+\sum_{i=1}^{h}(f(t_{i})-g(t_{i}))\breve{\nabla}_{i}(\tilde{u})
\end{align*} 
where $\breve{\nabla}_{i}(\tilde{u})$ denotes the image of $\nabla_{i}(\tilde{u})$ in $H_{\dR}^{3}(X_{0}/k)$ (see \cite{Del81}).
\par
Write $F'=\fil^{3}H_{\dR}^{3}(X'/S)$. Let $F\subset H_{\dR}^{3}(X'/S)$ be a \emph{``lifting of the Hodge filtration''} on $X$, i.e. a direct summand lifting $\fil^{3}H_{\dR}^{3}(X/R)$. Since the canonical map
\begin{equation*}
F\rightarrow H_{\dR}^{3}(X'/S)/F'
\end{equation*}
factors through $F\rightarrow\fil^{3}H_{\dR}^{3}(X_{0}/k)=H^{0}(X_{0},\Omega^{3}_{X_{0}/k})$ and has image in
\begin{equation*}
\mathfrak{a}\left(H_{\dR}^{3}(X'/S)/F'\right)\cong\mathfrak{a}\otimes_{k}\left(H_{\dR}^{3}(X_{0}/k)/\fil^{3}H_{\dR}^{3}(X_{0}/k)\right)
\end{equation*}
we see that liftings of the Hodge filtration are classified by $k$-linear maps
\begin{equation*}
\varkappa(F):H^{0}(X_{0},\Omega^{3}_{X_{0}/k})\rightarrow \mathfrak{a}\otimes_{k}\left(H_{\dR}^{3}(X_{0}/k)/\fil^{3}H_{\dR}^{3}(X_{0}/k)\right)
\end{equation*} 
\begin{Proposition}\label{inverse image is in fil2}
We have that $\Psi^{-1}(\emph{\fil}^{3}H_{\emph{\dR}}^{3}(Y/S))\subseteq\emph{\fil}^{2}H_{\emph{\dR}}^{3}(X'/S)$.
\end{Proposition}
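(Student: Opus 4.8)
The plan is to combine a reduction modulo $\mathfrak{a}$ with the explicit Gauss--Manin formula (\ref{4}) for $\Psi$ and Griffiths transversality. Throughout I will use that, $\alpha$ being a small surjection, $\mathfrak{a}\subseteq\mathfrak{m}_{S}$ and $\mathfrak{a}^{2}\subseteq\mathfrak{a}\mathfrak{m}_{S}=0$; that $\alpha\circ f=\alpha\circ g$ since $X'$ and $Y$ both lift $X/R$; that under our standing hypotheses de Rham cohomology and its Hodge filtration commute with base change and the Hodge steps are direct summands; and that $\Psi$ reduces modulo $\mathfrak{a}$ to the tautological isomorphism $H_{\dR}^{3}(X'/S)\otimes_{S}R\cong H_{\dR}^{3}(X/R)\cong H_{\dR}^{3}(Y/S)\otimes_{S}R$, which respects Hodge filtrations.

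First I would locate $u$ up to an $\mathfrak{a}$-error. Reducing the hypothesis $\Psi(u)\in\fil^{3}H_{\dR}^{3}(Y/S)$ modulo $\mathfrak{a}$ and using that $\Psi\bmod\mathfrak{a}$ is the tautological isomorphism (which carries $\fil^{3}$ to $\fil^{3}$), the image $\bar u$ of $u$ in $H_{\dR}^{3}(X/R)$ lies in $\fil^{3}H_{\dR}^{3}(X/R)=(\alpha f)^{\ast}\fil^{3}H_{\dR}^{3}(\mathfrak{X}/A)$. I would then choose $\omega_{0}\in\fil^{3}H_{\dR}^{3}(\mathfrak{X}/A)$ with $(\alpha f)^{\ast}\omega_{0}=\bar u$, so that $a:=u-f^{\ast}\omega_{0}\in\mathfrak{a}H_{\dR}^{3}(X'/S)=\mathfrak{a}\otimes_{k}H_{\dR}^{3}(X_{0}/k)$, and fix any lift $\tilde u\in H_{\dR}^{3}(\mathfrak{X}/A)$ of $u$ along $f^{\ast}$. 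Since $\tilde u-\omega_{0}$ maps to $0$ in $H_{\dR}^{3}(X/R)=H_{\dR}^{3}(\mathfrak{X}/A)\otimes_{A}R$, I can write $\tilde u-\omega_{0}=\sum_{j}b_{j}\eta_{j}$ with $b_{j}\in\ker(\alpha f)\subseteq\mathfrak{m}_{A}$ and $\eta_{j}\in H_{\dR}^{3}(\mathfrak{X}/A)$; note $f(b_{j}),g(b_{j})\in\mathfrak{a}$ and $a=\sum_{j}f(b_{j})\bar\eta_{j}$, where $\bar\eta_{j}$ is the image of $\eta_{j}$ in $H_{\dR}^{3}(X_{0}/k)$.

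Next I would run this lift through (\ref{4}) with $\epsilon_{i}:=f(t_{i})-g(t_{i})\in\mathfrak{a}$. One computes $g^{\ast}\tilde u=g^{\ast}\omega_{0}+\sum_{j}g(b_{j})\bar\eta_{j}$ (using $g(b_{j})\in\mathfrak{a}$) and $\breve\nabla_{i}(\tilde u)=\breve\nabla_{i}(\omega_{0})+\sum_{j}\overline{\partial b_{j}/\partial t_{i}}\,\bar\eta_{j}$ (the terms $b_{j}\nabla_{i}(\eta_{j})$ dying as $b_{j}\in\mathfrak{m}_{A}$), and since $\mathfrak{a}^{2}=0$ a first-order Taylor expansion gives $f(b_{j})-g(b_{j})=\sum_{i}\epsilon_{i}\,\overline{\partial b_{j}/\partial t_{i}}$; plugging in, the $g(b_{j})$-terms and the derivative terms recombine into $a$ and (\ref{4}) collapses to
\begin{equation*}
\Psi(u)=g^{\ast}\omega_{0}+\sum_{i}\epsilon_{i}\breve\nabla_{i}(\omega_{0})+a.
\end{equation*}
Here $g^{\ast}\omega_{0}\in\fil^{3}H_{\dR}^{3}(Y/S)$, and Griffiths transversality gives $\nabla_{i}(\omega_{0})\in\fil^{2}H_{\dR}^{3}(\mathfrak{X}/A)$, hence $\breve\nabla_{i}(\omega_{0})\in\fil^{2}H_{\dR}^{3}(X_{0}/k)$ and $\epsilon_{i}\breve\nabla_{i}(\omega_{0})\in\mathfrak{a}\,\fil^{2}H_{\dR}^{3}(Y/S)\subseteq\fil^{2}H_{\dR}^{3}(Y/S)$ (using that $\fil^{2}$ is a direct summand). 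So $\Psi(u)-a\in\fil^{2}H_{\dR}^{3}(Y/S)$, and since $\Psi(u)\in\fil^{3}\subseteq\fil^{2}$ I conclude $a\in\fil^{2}H_{\dR}^{3}(Y/S)\cap\mathfrak{a}H_{\dR}^{3}(Y/S)=\mathfrak{a}\otimes_{k}\fil^{2}H_{\dR}^{3}(X_{0}/k)$. As this group is intrinsic to $X_{0}$, the same element $a$ lies in $\mathfrak{a}\,\fil^{2}H_{\dR}^{3}(X'/S)\subseteq\fil^{2}H_{\dR}^{3}(X'/S)$, and therefore $u=f^{\ast}\omega_{0}+a\in\fil^{3}H_{\dR}^{3}(X'/S)+\fil^{2}H_{\dR}^{3}(X'/S)=\fil^{2}H_{\dR}^{3}(X'/S)$.

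I expect the bookkeeping in the middle step to be the main obstacle: one must verify that $\Psi$ is genuinely independent of the chosen lift (so that the convenient lift $\omega_{0}+\sum_{j}b_{j}\eta_{j}$ is admissible), that the various reductions to $H_{\dR}^{3}(X_{0}/k)$ appearing in $\Psi$, in $a$, and in the Taylor identity are the same one, and that the $\eta_{j}$-contributions really do recombine into $a=u-f^{\ast}\omega_{0}$. The geometric input is slight --- only Griffiths transversality and the inclusion $\fil^{3}\subseteq\fil^{2}$ of Hodge steps, which together give the one-step drop --- so the Calabi--Yau hypotheses themselves enter only through the standing base-change and direct-summand assumptions.
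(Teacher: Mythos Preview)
Your argument is correct. The bookkeeping in the middle step goes through: since $b_{j}\in\ker(\alpha f)\subseteq\mathfrak{m}_{A}$ one has $f(b_{j}),g(b_{j})\in\mathfrak{a}$, the Leibniz term $b_{j}\nabla_{i}(\eta_{j})$ dies modulo $\mathfrak{m}_{A}$, and the first-order Taylor identity $f(b_{j})-g(b_{j})=\sum_{i}\epsilon_{i}\,\overline{\partial b_{j}/\partial t_{i}}$ holds because $\epsilon_{i}\in\mathfrak{a}$ and $\mathfrak{a}^{2}=0$; the $\eta_{j}$-terms then recombine exactly into $a=\sum_{j}f(b_{j})\bar{\eta}_{j}$ as you claim. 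The conclusion $a\in\mathfrak{a}\otimes_{k}\fil^{2}H_{\dR}^{3}(X_{0}/k)$ and hence $u=f^{\ast}\omega_{0}+a\in\fil^{2}H_{\dR}^{3}(X'/S)$ is then clear.

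The paper's proof is shorter and proceeds differently. It exploits the fact that $\fil^{3}H_{\dR}^{3}(Y/S)$ is a \emph{line}: one simply takes a generator $\tilde{u}$ of $\fil^{3}H_{\dR}^{3}(\mathfrak{X}/A)$, sets $u=f^{\ast}\tilde{u}$, and reads off from formula~(\ref{4}) (run backwards) that $u-\sum_{i}(f(t_{i})-g(t_{i}))\breve{\nabla}_{i}(\tilde{u})$ generates $F_{Y}$. The whole proposition then reduces to the single observation, via Griffiths transversality, that $\breve{\nabla}_{i}(\tilde{u})\in\text{gr}^{2}H_{\dR}^{3}(X_{0}/k)$; this is phrased through Deligne's classifying map $\varkappa(F_{Y})$. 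Your approach, by contrast, handles an arbitrary element of $\Psi^{-1}(\fil^{3})$ directly and never invokes the rank-one property or the classifying-map formalism of \cite{Del81}. The trade-off is that the paper's argument is almost immediate once the generator is written down, whereas yours is self-contained and would go through unchanged if $\fil^{3}$ had higher rank --- at the cost of the Taylor-expansion bookkeeping you rightly flag as the main obstacle.
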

\begin{proof}
Set $F_{Y}:=\Psi^{-1}(\fil^{3}H_{\dR}^{3}(Y/S)$. By the above discussion, it is equivalent to show that $\varkappa(F_{Y})$ has image contained in $\mathfrak{a}\otimes_{k}\text{gr}^{2}H_{\dR}^{3}(X_{0}/k)$. We will do this using the explicit expression given in the isomorphism (\ref{4}):-
\par
As above, let $u\in H_{\dR}^{3}(X'/S)$, choose a generator $\tilde{u}\in\fil^{3}H_{\dR}^{3}(\mathfrak{X}/A)$ such that $f_{\ast}(\tilde{u})=u$, and write $u_{0}$ for the image of $\tilde{u}$ in $H^{0}(X_{0},\Omega^{3}_{X_{0}/k})$. Then the isomorphism (\ref{4}) shows that
\begin{equation*}
u-\sum_{i=1}^{r}(f(t_{i})-g(t_{i}))\breve{\nabla}_{i}(\tilde{u})
\end{equation*}
is a generator of $F_{Y}$. Then (\cite{Del81}, 1.1.2) says that the map $\varkappa(F_{Y})$ on $u_{0}$ is given by
\begin{equation*}
\varkappa(F_{Y})(u_{0})=-\sum_{i=1}^{r}(f(t_{i})-g(t_{i}))\otimes\breve{\nabla}_{i}(\tilde{u})\in\mathfrak{a}\otimes_{k}\text{gr}^{2}H_{\dR}^{3}(X_{0}/k) 
\end{equation*}
since the $\breve{\nabla}_{i}(\tilde{u})$ are in $H^{1}(X_{0},\mathcal{T}_{X_{0}/k})=\text{gr}^{2}H_{\dR}^{3}(X_{0}/k)$.
\end{proof}
\begin{Proposition}\label{bijection between lines and deformations}
In the above situation, we have an identification
\begin{align*}
& \text{\emph{Def}}_{X}(S)=\left\{\text{lines }E\subseteq\emph{\fil}^{2}H_{\mathrm{dR}}^{3}(X'/S)\text{ lifting }\emph{\fil}^{3}H_{\mathrm{dR}}^{3}(X/R)\right\} \\
& \ \ \ \ \ \  Y \ \ \ \mapsto \ \ \ F_{Y}:=\Psi^{-1}(\emph{\fil}^{3}H_{\mathrm{dR}}^{3}(Y/S))
\end{align*}
where $\mathrm{Def}_{X}(S)$ denotes the set of isomorphism classes of deformations of $X$ over $\mathrm{Spec}\,S$.
\end{Proposition}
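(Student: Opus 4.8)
The plan is to realise both sides as torsors under the $k$-vector space $H^{1}(X_{0},\mathcal{T}_{X_{0}/k})\otimes_{k}\mathfrak{a}$ and then to check that $Y\mapsto F_{Y}$ respects the torsor structures; a morphism of torsors under one and the same group is automatically a bijection. Before that I would verify that the map is well-defined: that $F_{Y}$ is a line, that it lifts $\fil^{3}H_{\dR}^{3}(X/R)$, that it lies in $\fil^{2}H_{\dR}^{3}(X'/S)$, and that it depends only on the isomorphism class of $Y$. Containment in $\fil^{2}$ is exactly Proposition~\ref{inverse image is in fil2}. That $F_{Y}$ reduces mod $\mathfrak{a}$ to $\fil^{3}H_{\dR}^{3}(X/R)$ is clear from the formula~(\ref{4}): since $f(t_{i})-g(t_{i})\in\mathfrak{a}$, the isomorphism $\Psi$ reduces to the identity of $H_{\dR}^{3}(X/R)$, while $\fil^{3}H_{\dR}^{3}(Y/S)$ reduces to $\fil^{3}H_{\dR}^{3}(X/R)$; and $F_{Y}$ is a line because $\fil^{3}H_{\dR}^{3}(Y/S)=H^{0}(Y,\Omega_{Y/S}^{3})$ is an invertible $S$-module (using $h^{3,0}=1$ together with our freeness and base-change hypotheses), so $\Psi^{-1}$ carries it to a rank-one direct summand. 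Independence of the auxiliary choices hidden in $\Psi$ (the lift $\tilde{u}$ and the classifying maps $f,g$) is the content of Deligne's parallel-transport construction, \cite{Del81}.

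Next I would make the two torsor structures explicit. On the left, $\mathrm{Def}_{X}(S)\ne\emptyset$ (it contains $X'$), and since $S\twoheadrightarrow R$ is a square-zero extension with $k$-linear kernel $\mathfrak{a}$, $\mathrm{Def}_{X}(S)$ is a torsor under $H^{1}(X_{0},\mathcal{T}_{X_{0}/k})\otimes_{k}\mathfrak{a}$ by standard deformation theory; writing two deformations as $g,g'\colon A\to S$ with $\alpha g=\alpha g'$, the difference of their classes is $\sum_{i}(g(t_{i})-g'(t_{i}))\otimes\mathrm{KS}_{0}(\partial/\partial t_{i})$, where $\mathrm{KS}_{0}$ is the reduction mod $\mathfrak{m}_{S}$ of the Kodaira--Spencer map of $\mathfrak{X}/\mathfrak{S}$ at the point of $\mathfrak{S}$ determined by $X$. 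On the right, by the classifying maps $\varkappa$ introduced just before the proposition — and the requirement that their image land in $\mathrm{gr}^{2}$ rather than the whole of $H_{\dR}^{3}(X_{0}/k)/\fil^{3}$ — the set of lines in $\fil^{2}H_{\dR}^{3}(X'/S)$ lifting $\fil^{3}H_{\dR}^{3}(X/R)$ is a torsor under $\mathrm{Hom}_{k}\bigl(H^{0}(X_{0},\Omega_{X_{0}/k}^{3}),\,\mathfrak{a}\otimes_{k}\mathrm{gr}^{2}H_{\dR}^{3}(X_{0}/k)\bigr)$; the identification $\Omega_{X_{0}/k}^{2}\cong\mathcal{T}_{X_{0}/k}\otimes\Omega_{X_{0}/k}^{3}$ coming from the trivial canonical bundle, together with $h^{3,0}=1$, identifies this group canonically with $H^{1}(X_{0},\mathcal{T}_{X_{0}/k})\otimes_{k}\mathfrak{a}$.

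Finally I would check equivariance. By the explicit formula~(\ref{4}) and Deligne's computation as recalled in the proof of Proposition~\ref{inverse image is in fil2}, for a generator $u$ of $\fil^{3}H_{\dR}^{3}(X'/S)$ with image $u_{0}\in H^{0}(X_{0},\Omega_{X_{0}/k}^{3})$ one has $\varkappa(F_{Y})(u_{0})=-\sum_{i}(f(t_{i})-g(t_{i}))\otimes\breve{\nabla}_{i}(\tilde{u})$, so replacing $g$ by $g'$ changes $\varkappa(F_{Y})$ by $\sum_{i}(g(t_{i})-g'(t_{i}))\otimes\breve{\nabla}_{i}(\tilde{u})$. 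By Proposition~\ref{Kodaira-Spencer} and the remark following it — the derivative of the period map is cup product with the Kodaira--Spencer class — the element $\breve{\nabla}_{i}(\tilde{u})\in H^{1}(X_{0},\mathcal{T}_{X_{0}/k})$ coincides, under the identification above, with the image of $\mathrm{KS}_{0}(\partial/\partial t_{i})$. Hence $Y\mapsto F_{Y}$ intertwines the two torsor actions up to an overall sign, and is therefore a bijection. I expect the only real work to be in this last step: pinning down, with correct signs and without smuggling in extra choices, the dictionary between the Kodaira--Spencer description of $\mathrm{Def}_{X}(S)$ and the $\varkappa$-description of the Hodge liftings through the Gauss--Manin isomorphism~(\ref{4}); once that dictionary is fixed, bijectivity is purely formal.
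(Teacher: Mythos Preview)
Your argument is correct and relies on the same ingredients as the paper --- the explicit formula~(\ref{4}) for $\Psi$, Proposition~\ref{inverse image is in fil2}, Proposition~\ref{Kodaira-Spencer}, and Deligne's computation of $\varkappa(F_{Y})$ --- but it packages them differently. The paper does not invoke torsors; instead it constructs the inverse map directly. Since the $\breve{\nabla}_{i}(\tilde{u})$ form a $k$-basis of $\mathrm{gr}^{2}H_{\mathrm{dR}}^{3}(X_{0}/k)$ (Proposition~\ref{Kodaira-Spencer} gives surjectivity of a map between spaces of the same dimension $h=h^{2,1}$), any line $E$ on the right-hand side has a generator of the form $u-\sum_{i}a_{i}\breve{\nabla}_{i}(\tilde{u})$ with uniquely determined $a_{i}\in\mathfrak{a}$; one then sends $E$ to the pull-back of the versal family along $g\colon t_{i}\mapsto f(t_{i})-a_{i}$ and checks from~(\ref{4}) that both composites are the identity. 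Your torsor formulation is cleaner conceptually and sidesteps writing down the inverse, while the paper's explicit description has the mild advantage that the same formula for the inverse is reused verbatim in the inductive step of Theorem~\ref{full bijection between lines and deformations}.
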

\begin{proof}
Proposition \ref{inverse image is in fil2} exactly tells us that $F_{Y}$ is in the right-hand side. We provide an inverse map:- 
\par
As was already noted, Proposition \ref{Kodaira-Spencer} shows that the $\breve{\nabla}_{i}(\tilde{u})$ form a basis of $\text{gr}^{2}H_{\dR}^{3}(X_{0}/k)$. Therefore, $F_{Y}$ determines $a_{i}\in\mathfrak{a}$ ($i=1,\ldots,r$) such that 
\begin{equation*}
u-\sum_{i=1}^{r}a_{i}\breve{\nabla}_{i}(\tilde{u})
\end{equation*}
generates $F_{Y}$. Then we get a Calabi-Yau threefold $Y$ over $\mathrm{Spec}\,S$ by pulling back the versal family along the $W$-algebra homomorphism
\begin{align*}
& g:A\rightarrow S\\
& \ \ \ \ t_{i}\mapsto f(t_{i})-a_{i}
\end{align*}
To see that the map $F_{Y}\mapsto Y$ just constructed is indeed the inverse of the map in the statement of the proposition, first note that $\Psi^{-1}(\mathrm{Fil}^{3}H^{3}_{\mathrm{dR}}(Y/S))$ is generated by 
\begin{equation*}
u-\displaystyle\sum_{i}^{r}(f(t_{i})-g(t_{i}))\breve{\nabla}_{i}(\tilde{u})=u-\displaystyle\sum_{i}^{r}(f(t_{i})-(f(t_{i})-a_{i}))\breve{\nabla}_{i}(\tilde{u})=u-\displaystyle\sum_{i}^{r}a_{i}\breve{\nabla}_{i}(\tilde{u})
\end{equation*}
and hence $\Psi^{-1}(\mathrm{Fil}^{3}H^{3}_{\mathrm{dR}}(Y/S))=F_{Y}$. This shows that one composition of the two maps is the identity. To see that the reverse composition is the identity, let $Y$ be the a deformation over $\mathrm{Spec}\,S$ of $X$. Then $Y$ is the pullback of the versal family $\mathfrak{X}$ along some $W$-algebra homomorphism $g:A\rightarrow S$. We have seen already that $F_{Y}$ is generated by 
\begin{equation*}
u-\displaystyle\sum_{i}^{r}(f(t_{i})-g(t_{i}))\breve{\nabla}_{i}(\tilde{u})
\end{equation*}
and therefore the deformation associated to $F_{Y}$ is, by definition, $\mathfrak{X}\times_{\mathrm{Spf}\,A}\mathrm{Spec}\,S$ where the homomorphism $A\rightarrow S$ is given by $t_{i}\mapsto f(t_{i})-(f(t_{i})-g(t_{i}))=g(t_{i})$, that is $\mathfrak{X}\times_{\mathrm{Spf}\,A}\mathrm{Spec}\,S=Y$.

\end{proof}
In the following, given a line $E\subset H^{3}_{\mathrm{dR}}(X'/S)$ we shall write $\nabla E$ for the $S$-module generated by $\nabla_{1}(e),\ldots,\nabla_{h}(e)$, where $e$ is a generator of $E$.

\begin{Lemma}\label{no perp}
Suppose that $\mathfrak{a}^{2}=0$. Let $E\subset H^{3}_{\mathrm{dR}}(X'/S)$ be a line lifting $\mathrm{Fil}^{3}H^{3}_{\mathrm{dR}}(X/R)$. If $E\subset\mathrm{Fil}^{2}H^{3}_{\mathrm{dR}}(X'/S)$ then $(E\oplus\nabla E)^{\perp}=E\oplus\nabla E$ under the cup-product.
\end{Lemma}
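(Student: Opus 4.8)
The plan is to show that $M:=E\oplus\nabla E$ is a direct summand of $H^{3}_{\mathrm{dR}}(X'/S)$ which is free of rank $h+1=\tfrac12\operatorname{rk}H^{3}_{\mathrm{dR}}(X'/S)$ and is isotropic for the cup-product pairing
\[
\langle\,\cdot\,,\,\cdot\,\rangle\colon H^{3}_{\mathrm{dR}}(X'/S)\times H^{3}_{\mathrm{dR}}(X'/S)\longrightarrow H^{6}_{\mathrm{dR}}(X'/S)\cong S,
\]
which is perfect (Poincar\'e duality for the smooth proper $X'/S$, using that the de Rham cohomology is locally free) and skew-symmetric. Once this is done, a maximal isotropic direct summand of half the rank of a unimodular skew form must equal its own orthogonal complement, which is the assertion.

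\emph{Step 1: $M$ is a direct summand of rank $h+1$.} A generator $e$ of $E$ together with $\nabla_{1}(e),\dots,\nabla_{h}(e)$ generates $M$, so by Nakayama it suffices to identify $M\otimes_{S}k$ inside $H^{3}_{\mathrm{dR}}(X_{0}/k)$. Since $E$ lifts $\mathrm{Fil}^{3}H^{3}_{\mathrm{dR}}(X/R)$, the reduction of $e$ generates $\mathrm{Fil}^{3}H^{3}_{\mathrm{dR}}(X_{0}/k)=H^{0}(X_{0},\Omega^{3}_{X_{0}/k})$; by Griffiths transversality the reductions of the $\nabla_{i}(e)$ lie in $\mathrm{Fil}^{2}H^{3}_{\mathrm{dR}}(X_{0}/k)$, and by the remark following Proposition~\ref{Kodaira-Spencer} their images span $\operatorname{gr}^{2}H^{3}_{\mathrm{dR}}(X_{0}/k)=H^{1}(X_{0},\mathcal{T}_{X_{0}/k})$. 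Counting dimensions with the Hodge numbers $h^{3,0}=h^{0,3}=1$ and $h^{2,1}=h^{1,2}=h$, these $h+1$ classes form a basis of $\mathrm{Fil}^{2}H^{3}_{\mathrm{dR}}(X_{0}/k)$; hence $M$ is free of rank $h+1$, and as $M\otimes_{S}k\hookrightarrow H^{3}_{\mathrm{dR}}(X_{0}/k)$ the inclusion $M\hookrightarrow H^{3}_{\mathrm{dR}}(X'/S)$ is split. The same argument applied to any deformation $Y/S$, combined with $\nabla(\mathrm{Fil}^{3}H^{3}_{\mathrm{dR}}(Y/S))\subseteq\mathrm{Fil}^{2}H^{3}_{\mathrm{dR}}(Y/S)$ (Griffiths transversality again), shows $\mathrm{Fil}^{2}H^{3}_{\mathrm{dR}}(Y/S)=\mathrm{Fil}^{3}H^{3}_{\mathrm{dR}}(Y/S)\oplus\nabla(\mathrm{Fil}^{3}H^{3}_{\mathrm{dR}}(Y/S))$.

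\emph{Step 2: $M$ is isotropic.} This is the heart of the matter. Let $Y/S$ be the deformation of $X/R$ corresponding to $E$ under Proposition~\ref{bijection between lines and deformations}, so $E=\Psi^{-1}(\mathrm{Fil}^{3}H^{3}_{\mathrm{dR}}(Y/S))$ for the isomorphism $\Psi$ of (\ref{4}). Two facts make the pairing transparent. First, $\Psi$ is an isometry for the cup-product pairings: because $\mathfrak{a}\mathfrak{m}_{S}=0$ the first-order formula (\ref{4}) is exact, and a direct check, using that the Gauss--Manin connection is compatible with cup product and acts as $d$ on $H^{6}_{\mathrm{dR}}(\mathfrak{X}/A)\cong A$, reduces the required identity to the first-order Taylor expansion of sections of $A$. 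Second, combining $E=\Psi^{-1}(\mathrm{Fil}^{3}H^{3}_{\mathrm{dR}}(Y/S))$ with Step~1 for $Y$ and the fact that $\Psi$ intertwines the operators $\nabla_{i}$ (again exact here, since $\mathfrak{a}^{2}=0$) identifies $M$ with $\Psi^{-1}(\mathrm{Fil}^{2}H^{3}_{\mathrm{dR}}(Y/S))$. It therefore suffices to observe that $\mathrm{Fil}^{2}H^{3}_{\mathrm{dR}}(Y/S)$ is isotropic, which is a matter of filtration degrees: cup product carries $\mathrm{Fil}^{p}\times\mathrm{Fil}^{q}$ into $\mathrm{Fil}^{p+q}H^{6}_{\mathrm{dR}}(Y/S)$, and $\mathrm{Fil}^{j}H^{6}_{\mathrm{dR}}(Y/S)=0$ for $j>3$ because $\operatorname{gr}^{j}H^{6}_{\mathrm{dR}}(Y/S)=H^{6-j}(Y,\Omega^{j}_{Y/S})=0$ for $j>3$. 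Alternatively one can bypass $\Psi$ and compute $\langle e,e\rangle$, $\langle e,\nabla_{i}(e)\rangle$, $\langle\nabla_{i}(e),\nabla_{j}(e)\rangle$ directly: writing $e=u-\sum_{l}a_{l}\breve{\nabla}_{l}(\tilde{u})$ as in Proposition~\ref{bijection between lines and deformations}, with $u=f^{\ast}\tilde{u}$ and $\tilde{u}$ a generator of $\mathrm{Fil}^{3}H^{3}_{\mathrm{dR}}(\mathfrak{X}/A)$, every term not killed outright by filtration degrees or by $\mathfrak{a}^{2}=0$ involves the trilinear Yukawa/period coupling on $H^{1}(X_{0},\mathcal{T}_{X_{0}/k})$, which is totally symmetric (a consequence of the flatness of Gauss--Manin), and these cross-terms cancel pairwise by the skew-symmetry of the cup product.

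\emph{Step 3: conclusion.} Since $\langle\,\cdot\,,\,\cdot\,\rangle$ is perfect, the orthogonal complement $M^{\perp}$ of the direct summand $M$ is again a direct summand, of rank $\operatorname{rk}H^{3}_{\mathrm{dR}}(X'/S)-\operatorname{rk}M=(2h+2)-(h+1)=h+1$. By Step~2, $M\subseteq M^{\perp}$; this is an inclusion of free $S$-modules of equal rank which stays injective after $\otimes_{S}k$ (both sides reduce to $\mathrm{Fil}^{2}H^{3}_{\mathrm{dR}}(X_{0}/k)$), so Nakayama gives $M=M^{\perp}$, that is $(E\oplus\nabla E)^{\perp}=E\oplus\nabla E$. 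I expect the main obstacle to be Step~2: one must make precise in what sense the period isomorphism $\Psi$ of (\ref{4}) is compatible with both the cup-product pairing and the operators $\nabla_{i}$, and check that the $\mathfrak{a}$-linear correction terms in $\langle\nabla_{i}(e),\nabla_{j}(e)\rangle$ really do cancel — the total symmetry of the Yukawa coupling, a consequence of the flatness and self-duality of $H^{3}$, being precisely what is needed.
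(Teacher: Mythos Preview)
Your overall structure is sound, and your Steps~1 and~3 making the rank and direct-summand argument explicit are correct and in fact fill in what the paper leaves implicit (the paper only checks isotropy and tacitly uses that $E\oplus\nabla E$ has half the rank). The real divergence is in Step~2.

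The paper's proof of isotropy is considerably more direct than either of your routes. It does not pass through Proposition~\ref{bijection between lines and deformations} at all, and so never needs to know that $\Psi$ is an isometry or that it intertwines the $\nabla_i$. Instead it simply observes that the hypothesis $E\subset\mathrm{Fil}^{2}H^{3}_{\mathrm{dR}}(X'/S)$ forces a generator of $E$ to have the shape
\[
e=u+\sum_{i=1}^{h}\alpha_{i}\,\breve{\nabla}_{i}(\tilde{u}),\qquad \alpha_{i}\in\mathfrak{a},
\]
with $u$ a generator of $\mathrm{Fil}^{3}H^{3}_{\mathrm{dR}}(X'/S)$, and then expands the three kinds of pairings $\langle e,\nabla_{s}e\rangle$ and $\langle\nabla_{s}e,\nabla_{t}e\rangle$ term by term. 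Using only that the cup product is Hodge-orthogonal (nonzero only between $H^{0}(\Omega^{3})$ and $H^{3}(\mathcal{O})$, or between $H^{1}(\Omega^{2})$ and $H^{2}(\Omega^{1})$), every term either vanishes outright or carries a coefficient $\alpha_{i}\alpha_{j}\in\mathfrak{a}^{2}=0$. That is the whole argument.

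Your Approach~A is a genuine detour: it buys a clean conceptual picture ($M$ is the pullback of an honest Hodge step $\mathrm{Fil}^{2}H^{3}_{\mathrm{dR}}(Y/S)$) at the cost of having to justify the compatibilities of $\Psi$ you flag at the end, and it also imports the small-surjection hypothesis $\mathfrak{a}\mathfrak{m}_{S}=0$ used in Proposition~\ref{bijection between lines and deformations}, which is stronger than the $\mathfrak{a}^{2}=0$ assumed here. Your Approach~B is the paper's computation, but the appeal to the total symmetry of the Yukawa coupling and cross-term cancellation is not invoked by the paper: in its bookkeeping the problematic-looking cross terms are already absorbed into the $\alpha_{i}\alpha_{j}$ contributions. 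In short, your proof is correct but the paper gets there with a two-line expansion and no auxiliary geometry.
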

\begin{proof}
$E$ is generated by an element of the form
\begin{equation*}
u+\sum_{i=1}^{h}\alpha_{i}\breve{\nabla}_{i}(\tilde{u})
\end{equation*}
for some $\alpha_{1},\ldots,\alpha_{h}\in\mathfrak{a}$. Since the cup-product $\langle -,-\rangle$ is anti-symmetric on $H^{3}_{\mathrm{dR}}(X'/S)$, to see that $E\oplus\nabla E=(E\oplus\nabla E)^{\perp}$ it suffices to show that 
\begin{equation}\label{first}
\langle u+\sum_{i=1}^{h}\alpha_{i}\breve{\nabla}_{i}(\tilde{u}),\nabla_{s}(u)+\sum_{i=1}^{h}\alpha_{i}\nabla_{s}\breve{\nabla}_{i}(\tilde{u})\rangle=0
\end{equation} 
for each $s=1,\ldots, h$ and
\begin{equation}\label{second}
\langle \nabla_{s}(u)+\sum_{i=1}^{h}\alpha_{i}\nabla_{s}\breve{\nabla}_{i}(\tilde{u}),\nabla_{t}(u)+\sum_{i=1}^{h}\alpha_{i}\nabla_{t}\breve{\nabla}_{i}(\tilde{u})\rangle=0
\end{equation} 
for each $s,t=1,\ldots, h$ with $s\neq t$. The only non-trivial cup-product pairings are between $H^{0}(X',\Omega_{X'/S}^{3})$ and $H^{3}(X',\mathcal{O}_{X'})$, and between $H^{1}(X',\Omega_{X'/S}^{2})$ and $H^{2}(X',\Omega_{X'/S}^{1})$. Therefore the left-hand side of \eqref{first} is
\begin{equation*}
\sum_{i,j=1}^{h}\alpha_{i}\alpha_{j}\langle\breve{\nabla}_{i}(\tilde{u}),\nabla_{s}\breve{\nabla}_{i}(\tilde{u})\rangle
\end{equation*}
and the left-hand side of \eqref{second} is 
\begin{equation*}
\sum_{i,j=1}^{h}\alpha_{i}\alpha_{j}\langle\nabla_{s}\breve{\nabla}_{i}(\tilde{u}),\nabla_{t}\breve{\nabla}_{i}(\tilde{u})\rangle
\end{equation*}
Both expressions vanish because $\mathfrak{a}^{2}=0$.
\end{proof}

We now extend Proposition \ref{bijection between lines and deformations} to arbitrary nilpotent PD-thickenings:-
\begin{Theorem}\label{full bijection between lines and deformations}
Let $X_{0}/k$ be a Calabi-Yau threefold and let $\alpha:S\twoheadrightarrow R$ be a surjection in $\underline{\mathcal{A}rt}_{W,k}$ such that $\mathfrak{a}=\ker\alpha$ is endowed with a nilpotent PD-structure which is compatible with the canonical PD-structure on $pW$. Write $\mathfrak{a}^{[n]}$ for the $n^{th}$-divided power ideal of $\mathfrak{a}$ for $n\geq 1$, and $\mathfrak{a}^{[0]}=S$.
\par
Let $X$ be a lifting of $X_{0}/k$ to $R$, and let $X'$ be a deformation of $X/R$ over $\text{Spec }S$. Then if $Y$ is a deformation of $X/R$ over $S$, the Gauss-Manin connection induces an isomorphism
\begin{equation*}
\Psi:H_{\mathrm{dR}}^{3}(X'/S)\xrightarrow{\sim}H_{\mathrm{dR}}^{3}(Y/S)
\end{equation*} 
We have a bijection between $\text{\emph{Def}}_{X}(S)$ and 
\begin{equation*}
\left\{\begin{array}{c}
\text{lines }E\subseteq\sum_{i+j=3}\mathfrak{a}^{[i]}\emph{\fil}^{j}H_{\mathrm{dR}}^{3}(X'/S)\text{ lifting }\emph{\fil}^{3}H_{\mathrm{dR}}^{3}(X/R)\\
\text{ such that }(E\oplus\nabla E)^{\perp}=E\oplus\nabla E
\end{array}\right\}
\end{equation*}
given by $Y\mapsto F_{Y}:=\Psi^{-1}(\emph{\fil}^{3}H_{\mathrm{dR}}^{3}(Y/S))$.
\end{Theorem}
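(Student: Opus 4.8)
The plan is to reduce the general statement to Proposition~\ref{bijection between lines and deformations} by d\'evissage along a tower of small surjections, presenting both sides of the claimed bijection as compatible torsors over the appropriate Hodge cohomology groups and exploiting the self-duality of the Hodge filtration in the middle degree. Before that, one must check that the assignment $Y\mapsto F_Y$ is well-defined, i.e.\ that $F_Y$ really lies in the indicated set.

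Since $X'$ and $Y$ are deformations of the same $X/R$ and $\mathfrak a=\ker\alpha$ is a nilpotent PD-ideal, the crystal structure on de Rham cohomology (which here coincides with crystalline cohomology, everything being liftable) supplies a canonical isomorphism $\Psi\colon H^{3}_{\dR}(X'/S)\xrightarrow{\sim}H^{3}_{\dR}(Y/S)$ reducing to the identity modulo $\mathfrak a$; writing $X'=\mathfrak X\times_{A,f}S$ and $Y=\mathfrak X\times_{A,g}S$ with $f\equiv g\bmod\mathfrak a$, it is the PD-Taylor expansion of the Gauss--Manin connection,
\[
\Psi(f^{*}\tilde\xi)=\sum_{\nu}\gamma_{\nu}\big(f(t)-g(t)\big)\,g^{*}\!\big(\nabla^{\nu}\tilde\xi\big),
\]
a finite sum by nilpotence of $\mathfrak a$, where $\gamma_{\nu}\big(f(t)-g(t)\big)=\prod_{i}\gamma_{\nu_i}\big(f(t_i)-g(t_i)\big)\in\mathfrak a^{[|\nu|]}$ and $\nabla^{\nu}=\prod_{i}\nabla_{i}^{\nu_i}$. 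Applying Griffiths transversality $\nabla(\fil^{i})\subseteq\fil^{i-1}\otimes\Omega^{1}_{A/W}$ termwise shows at once that $F_Y:=\Psi^{-1}(\fil^{3}H^{3}_{\dR}(Y/S))$ lies in $\sum_{i+j=3}\mathfrak a^{[i]}\fil^{j}H^{3}_{\dR}(X'/S)$, and, since $\Psi\equiv\mathrm{id}\bmod\mathfrak a$, lifts $\fil^{3}H^{3}_{\dR}(X/R)$ --- this is the generalisation of Proposition~\ref{inverse image is in fil2}. For the last condition, the Poincar\'e pairing is horizontal for Gauss--Manin, so $\Psi$ is an isometry; since $\fil^{3}\oplus\nabla(\fil^{3})=\fil^{2}H^{3}_{\dR}(Y/S)$ (Proposition~\ref{Kodaira-Spencer} pulled back to $Y/S$, together with Nakayama and the equality $h=h^{2,1}$) and $\fil^{2}$ is Lagrangian --- because $(\fil^{p})^{\perp}=\fil^{4-p}$ in degree $3$ --- one obtains $(F_Y\oplus\nabla F_Y)^{\perp}=F_Y\oplus\nabla F_Y$ once $F_Y\oplus\nabla F_Y$ is identified with $\Psi^{-1}(\fil^{2}H^{3}_{\dR}(Y/S))$.

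For bijectivity, factor $\alpha$ as a composite of small surjections $S=S_{N}\twoheadrightarrow\cdots\twoheadrightarrow S_{0}=R$, each a nilpotent PD-thickening compatible with the given data (refine the divided-power filtration $\mathfrak a\supseteq\mathfrak a^{[2]}\supseteq\cdots$ by powers of $\mathfrak m_{S}$; recall $3<p$), and induct on $N$. For $N=1$ the kernel is square-zero with its induced (essentially trivial) PD-structure, so $\sum_{i+j=3}\mathfrak a^{[i]}\fil^{j}=\fil^{3}+\mathfrak a\fil^{2}$ and a line lifting $\fil^{3}H^{3}_{\dR}(X/R)$ lies there exactly when it lies in $\fil^{2}H^{3}_{\dR}(X'/S)$; Lemma~\ref{no perp} then makes the self-duality condition automatic, so the target set coincides with that of Proposition~\ref{bijection between lines and deformations}, which is the base case. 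For the inductive step, with $S\twoheadrightarrow S'\twoheadrightarrow R$ and $S\twoheadrightarrow S'$ small of kernel $\bar{\mathfrak a}$, compare $\mathrm{Def}_{X}(S)\to\mathrm{Def}_{X}(S')$ with the reduction $E\mapsto(E\bmod\bar{\mathfrak a})$ on lines: both are surjective with fibres torsors under $H^{1}(X_{0},\mathcal T_{X_{0}/k})\otimes_{k}\bar{\mathfrak a}=\mathrm{gr}^{2}H^{3}_{\dR}(X_{0}/k)\otimes_{k}\bar{\mathfrak a}$ --- for deformations by the unobstructedness hypothesis and the standard tangent-space identification, for lines by the description of liftings within a fixed reduction, where a Lemma~\ref{no perp}-type computation shows the self-duality condition imposes nothing new on this torsor layer --- and $Y\mapsto F_Y$ is equivariant for these actions because the $\mathrm{gr}^{2}$-component of the linear term of the Taylor formula is $-\sum_{i}(f(t_i)-g(t_i))\breve\nabla_{i}(\tilde u)$, with the $\breve\nabla_{i}(\tilde u)$ a basis of $\mathrm{gr}^{2}$ by Proposition~\ref{Kodaira-Spencer}. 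An equivariant map of torsors lying over a bijection of base sets is a bijection, so the inductive hypothesis completes the argument.

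The step I expect to be hardest is the self-duality in the well-definedness argument: first, verifying that $\Psi$ intertwines the connection-twists closely enough that $F_Y\oplus\nabla F_Y=\Psi^{-1}(\fil^{2}H^{3}_{\dR}(Y/S))$ --- the individual $\nabla_{i}$ do not commute with $\Psi$, only the full parallel transport does, so this needs care --- and second, more conceptually, that $(E\oplus\nabla E)^{\perp}=E\oplus\nabla E$ is not merely necessary but exactly sufficient, i.e.\ that it cuts the lines in $\sum_{i+j=3}\mathfrak a^{[i]}\fil^{j}$ down to precisely the \emph{integrable} ones; in the d\'evissage this is the statement that the condition is empty on each square-zero layer, which is Lemma~\ref{no perp}. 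Two secondary points also need attention: that the span $\nabla E$ is independent of the choice of lift of a generator of $E$ to $H^{3}_{\dR}(\mathfrak X/A)$, and that the induced PD-structures on the layers of the d\'evissage are handled so that Proposition~\ref{bijection between lines and deformations} genuinely applies at the base (harmlessly, since $3<p$).
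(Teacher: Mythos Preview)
Your proposal is correct and follows the same overall arc as the paper --- reduce by d\'evissage to Proposition~\ref{bijection between lines and deformations} --- but the d\'evissage is organised differently, and the difference is instructive.

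The paper factors $S\twoheadrightarrow R$ as $S\twoheadrightarrow R_{1}:=S/\mathfrak a^{[t-1]}\twoheadrightarrow R$, applies the inductive hypothesis to $R_{1}\twoheadrightarrow R$ to convert the reduced line $E_{R_{1}}$ into a deformation $Z/R_{1}$, chooses a lift $Z'/S$, and then transports $E$ across the Gauss--Manin isomorphism $H^{3}_{\dR}(X'/S)\cong H^{3}_{\dR}(Z'/S)$. This \emph{change of reference deformation} converts the problem into one for $S\twoheadrightarrow R_{1}$, where the kernel $\mathfrak b=\mathfrak a^{[t-1]}$ is square-zero with trivial PD-structure, so that $\sum_{i+j=3}\mathfrak b^{[i]}\fil^{j}=\fil^{3}+\mathfrak b\,\fil^{2}$ and one is squarely in the situation of Proposition~\ref{bijection between lines and deformations} (after a further tower of small surjections). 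You instead keep the reference $X'$ fixed throughout and run a single tower $S=S_{N}\twoheadrightarrow\cdots\twoheadrightarrow S_{0}=R$ of small surjections, arguing that both sides are towers of torsors under $\mathrm{gr}^{2}H^{3}_{\dR}(X_{0}/k)\otimes\bar{\mathfrak a}$ and that $Y\mapsto F_{Y}$ is equivariant via the linear term of the Taylor formula. This works, but the step you gloss as ``a Lemma~\ref{no perp}-type computation'' is exactly where the paper's trick pays off: with the reference unchanged, you must check directly that among all lifts of a given $E'$ the two conditions $E\subseteq\sum\mathfrak a^{[i]}\fil^{j}$ and $(E\oplus\nabla E)^{\perp}=E\oplus\nabla E$ together cut the torsor down to precisely $\mathrm{gr}^{2}\otimes\bar{\mathfrak a}$, whereas after changing reference this collapses to the visibly equivalent pair ``$E\subseteq\fil^{2}$'' plus Lemma~\ref{no perp}.

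On well-definedness you are more explicit than the paper, which simply cites \cite[Lemme 1.1.2]{Del81} (and \cite[Theorem 2.4]{Ogu78}) together with horizontality of the cup-product for both the Griffiths-transversality bound and the self-duality of $F_{Y}\oplus\nabla F_{Y}$. The concern you flag about identifying $F_{Y}\oplus\nabla F_{Y}$ with $\Psi^{-1}(\fil^{2}H^{3}_{\dR}(Y/S))$ is not confronted any more directly there either.
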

\begin{proof}
The map $Y\mapsto F_{Y}$ is well-defined by \cite[Lemme 1.1.2]{Del81} (see also \cite[Theorem 2.4]{Ogu78}), and the fact that the cup-product is horizontal for the Gauss-Manin connection.

In the usual way, we first reduce the statement by decomposing $\alpha$ into a series of ``smaller'' PD-thickenings:-
\par
Let $t$ be such that
\begin{equation*}
0=\mathfrak{a}^{[t]}\subset\mathfrak{a}^{[t-1]}\subset\cdots\subset\mathfrak{a}^{[2]}\subset\mathfrak{a}
\end{equation*}
Then we can write $\alpha$ as the composition of two nilpotent PD-thickenings:-
\begin{equation*}
\alpha:S\twoheadrightarrow R_{1}:=S/\mathfrak{a}^{[t-1]}\twoheadrightarrow R
\end{equation*}
Notice that, by induction, we can assume the theorem holds for the second PD-thickening (the base case $t=2$ is exactly Proposition \ref{bijection between lines and deformations}).
\par
Now, given a line $E\subseteq\sum_{i+j=3}\mathfrak{a}^{[i]}\fil^{j}H_{\mathrm{dR}}^{3}(X'/S)$ lifting $\fil^{3}H_{\dR}^{3}(X/R)$ such that $(E\oplus\nabla E)^{\perp}=E\oplus\nabla E$, we get an induced $E_{R_{1}}\subseteq\sum_{i+j=3}\mathfrak{a'}^{[i]}\fil^{j}H_{\dR}^{3}(X'\times_{S}R_{1}/R_{1})$, where $\mathfrak{a}'=\ker(R_{1}\twoheadrightarrow R)$, such that $(E_{R_{1}}\oplus\nabla E_{R_{1}})^{\perp}=E_{R_{1}}\oplus\nabla E_{R_{1}}$. But since the theorem holds for $R_{1}\twoheadrightarrow R$, there corresponds to $E_{R_{1}}$ a deformation $Z$ of $X/R$ over $R_{1}$. Choose some deformation $Z'$ of $Z/R_{1}$ over $S$. Then both $X'$ and $Z'$ lift $X$ to $\text{Spec S}$, so we have an isomorphism
\begin{equation*}
H_{\dR}^{3}(X'/S)\xrightarrow{\sim}H_{\dR}^{3}(Z'/S)
\end{equation*}  
Let $e$ be a generator of $E$. Let $\tilde{e}\in H^{3}_{\mathrm{dR}}(\mathfrak{X}/A)$ be such that $f^{\ast}(\tilde{e})=e$ and set $e'=g^{\ast}(\tilde{e})$, where $f,g:A\rightrightarrows S$ are $W$-algebra homomorphisms giving $X'$ and $Z$. Then the image of $e$ under this isomorphism is given by (see \cite[Lemme 1.1.2]{Del81})
\begin{equation*}
e'+\sum_{\underline{m}}(f(\underline{t})-g(\underline{t}))^{[\underline{m}]}\breve{\nabla}^{\underline{m}}(\tilde{e})
\end{equation*}
where the sum is over multi-indices $\underline{m}=(m_{1},\ldots,m_{h})\in\mathbb{N}^{h}$. The notation $(f(\underline{t})-g(\underline{t}))^{[\underline{m}]}$ means 
\begin{equation*}
(f(t_{1})-g(t_{1}))^{[m_{1}]}\cdots(f(t_{h})-g(t_{h}))^{[m_{h}]}
\end{equation*}
and $\breve{\nabla}^{\underline{m}}(\tilde{e})$ is the image in $H^{3}_{\mathrm{dR}}(X_{0}/k)$ of 
\begin{equation*}
\nabla^{\underline{m}}(\tilde{e}):=\nabla_{1}(\tilde{e})^{m_{1}}\cdots\nabla_{h}(\tilde{e})^{m_{h}}
\end{equation*}
The image $E'$ of $E$ under this isomorphism is then a line in $\sum_{i+j=3}\mathfrak{a}^{[i]}\fil^{j}H_{\dR}^{3}(Z'/S)$ lifting $\fil^{3}H_{\dR}^{3}(Z/R_{1})$, and $(E'\oplus\nabla E')^{\perp}=E'\oplus\nabla E'$ because the cup-product is horizontal for the Gauss-Manin connection. In this way we are reduced to proving the theorem for the nilpotent PD-thickening
\begin{equation*}
S\twoheadrightarrow R_{1}=S/\mathfrak{a}^{[t-1]}
\end{equation*}
But the divided powers on $\mathfrak{b}=\ker(S\twoheadrightarrow R_{1})=\mathfrak{a}^{[t-1]}$ are trivial, and $\mathfrak{b}^{2}=0$. Then we can decompose again into $S\twoheadrightarrow R_{m}\twoheadrightarrow\cdots\twoheadrightarrow R_{1}$ and conclude surjectivity by Proposition \ref{bijection between lines and deformations}. It should also be noted that Proposition \ref{bijection between lines and deformations} at each step also gives injectivity. 
\end{proof}

\begin{Def*}
Consider a lifting $E^{\bullet}$ of $\text{Fil}^{\bullet}\mathscr{D}_{S/R}(X)=\text{Fil}^{\bullet}H_{\emph{\dR}}^{3}(X/R)$
\begin{equation*}
E^{3}\subseteq E^{2}\subseteq E^{1}\subseteq\tilde{P}_{0}/I_{S}\tilde{P}_{0}=H_{\mathrm{cris}}^{3}(X/S)=H_{\mathrm{dR}}^{3}(X'/S)
\end{equation*}
and write $\nabla$ for the Gauss-Manin connection on $\tilde{P}_{0}/I_{S}\tilde{P}_{0}$.  We say that $E^{\bullet}$ is of CY-type if the following three conditions hold:-
\begin{align*}
& \text{(i) }E^{3}=\langle e\rangle\subseteq\sum_{i+j=3}\mathfrak{a}^{[i]}\text{Fil}^{j}H_{\mathrm{dR}}^{3}(X'/S)\text{ is a line lifting }\text{Fil}^{3}H_{\mathrm{dR}}^{3}(X/R) \\
& \text{(ii) }E^{1}=\left(E^{3}\right)^{\perp} \\
& \text{(iii) }E^{2}=E^{3}\oplus\nabla E^{3}\text{ and }(E^{2})^{\perp}=E^{2}
\end{align*}
\end{Def*}

We may now rephrase Theorem \ref{full bijection between lines and deformations} in terms of liftings of the Hodge filtration of the associated relative $\mathcal{W}_{S/R}$-display. We assume that the deformation $X$ satisfies the two conditions in \S\ref{Crystals of relative displays section}. For this it is sufficient to assume that $X_{0}$ satisfies the conditions. For example they hold for the Calabi-Yau threefolds given in Remark \ref{remark} by \cite{DI87}, provided that $p\geq 5$. Notice that the Gauss-Manin connection of $\mathfrak{X}/\mathfrak{S}$ gives the Gauss-Manin connection on $H_{dR}^{3}(X'/S')$ by base-change along $A\rightarrow S$.

\begin{Theorem}\label{defs and CY-type liftings}
There is a bijection
\begin{equation*}
\text{\emph{Def}}_{X}(S)\xrightarrow{1:1}\left\{\text{CY-type liftings of Fil}^{\bullet}\mathscr{D}_{S/R}(X)\right\} 
\end{equation*}
\end{Theorem}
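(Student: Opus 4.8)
\emph{Proposal.} The plan is to deduce the theorem immediately from Theorem \ref{full bijection between lines and deformations} by recognising that a CY-type lifting of $\fil^\bullet\mathscr{D}_{S/R}(X)=\fil^\bullet H^3_{\dR}(X/R)$ carries exactly the same information as a single line of the sort that parametrises $\mathrm{Def}_X(S)$ there. Under the identification $\tilde P_0/I_S\tilde P_0=H^3_{\cris}(X/S)=H^3_{\dR}(X'/S)$ from the definition, conditions (ii) and (iii) express $E^1$ and $E^2$ as functions of $E^3$ alone, namely $E^1=(E^3)^\perp$ and $E^2=E^3\oplus\nabla E^3$; condition (i) is verbatim the membership-and-lifting condition of Theorem \ref{full bijection between lines and deformations}; and the extra constraint $(E^2)^\perp=E^2$ in (iii) is verbatim its constraint $(E\oplus\nabla E)^\perp=E\oplus\nabla E$. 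So I would define the assignment $E^\bullet\mapsto E^3$ onto the set of lines parametrising $\mathrm{Def}_X(S)$ in Theorem \ref{full bijection between lines and deformations}, check that it is a bijection, and then compose with the bijection of that theorem to obtain the statement (the resulting map sends a deformation $Y$ to the CY-type lifting whose degree-three part is $\Psi^{-1}(\fil^3 H^3_{\dR}(Y/S))$).

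Injectivity of $E^\bullet\mapsto E^3$ is immediate from (ii) and (iii). For surjectivity one must check that the recipe $E\mapsto(E^3,E^2,E^1):=(E,\ E\oplus\nabla E,\ E^\perp)$ produces a bona fide lifting of $\fil^\bullet H^3_{\dR}(X/R)$, that is, a flag of direct summands of $H^3_{\dR}(X'/S)$ reducing modulo $\mathfrak a$ to $\fil^3\subseteq\fil^2\subseteq\fil^1 H^3_{\dR}(X/R)$. The inclusion $E^3\subseteq E^2$ is trivial, and $E^2\subseteq E^1$ is formal: from $E^3\subseteq E^2$ and $(E^2)^\perp=E^2$ we get $E^2=(E^2)^\perp\subseteq(E^3)^\perp=E^1$. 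For the direct-summand property and the ranks, a generator $e$ of the line $E^3$ reduces modulo $\mathfrak m_S$ to a generator of $\fil^3 H^3_{\dR}(X_0/k)=H^0(X_0,\Omega^3_{X_0/k})$, and by Proposition \ref{Kodaira-Spencer} (and the remark following it) the classes $\nabla_1(e),\dots,\nabla_h(e)$ reduce to a basis of $\mathrm{gr}^2 H^3_{\dR}(X_0/k)=H^1(X_0,\Omega^2_{X_0/k})$; hence $e,\nabla_1(e),\dots,\nabla_h(e)$ reduce to a linearly independent family, and Nakayama makes $E^2$ a free direct summand of rank $1+h$. Since the cup-product pairing on $H^3_{\dR}(X'/S)$ is a perfect (Poincar\'e duality) pairing, the orthogonal complement $E^1=(E^3)^\perp$ of the rank-one summand $E^3$ is a direct summand of rank $(2+2h)-1=1+2h$.

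It then remains to identify the reductions modulo $\mathfrak a$. The cup-product pairing is compatible with the base change $S\twoheadrightarrow R$ and remains perfect, so forming orthogonal complements of direct summands commutes with reduction; thus $E^1$ reduces to $(\fil^3 H^3_{\dR}(X/R))^\perp$, and this equals $\fil^1 H^3_{\dR}(X/R)$ because $\langle\fil^i,\fil^j\rangle=0$ for $i+j\geq 4$ on a proper smooth threefold and the ranks match. Likewise $E^2$ reduces to $\fil^3 H^3_{\dR}(X/R)+\nabla\bigl(\fil^3 H^3_{\dR}(X/R)\bigr)$, which is contained in $\fil^2 H^3_{\dR}(X/R)$ by Griffiths transversality and equals it because, after a further reduction modulo $\mathfrak m_R$, the classes of $e$ and of the $\nabla_i(e)$ span $\fil^2 H^3_{\dR}(X_0/k)$ by Proposition \ref{Kodaira-Spencer}, so Nakayama applies. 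With these checks done the two maps are mutually inverse, and the theorem follows.

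The main obstacle is exactly this last piece of bookkeeping: verifying that $E^1$ and $E^2$ are direct summands of the correct ranks and reduce to the correct steps of the Hodge filtration. The decisive inputs are Griffiths transversality, which yields $E^2\subseteq\fil^2$ upon reduction, and the surjectivity of the derivative of the period map, Proposition \ref{Kodaira-Spencer}, which gives the reverse inclusions and the linear independence that powers the Nakayama arguments; once these are in place, everything else is a formal restatement of Theorem \ref{full bijection between lines and deformations} in the language of flags, and in particular the displays never enter the argument.
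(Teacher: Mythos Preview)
Your proposal is correct and follows exactly the same route as the paper: you reduce to Theorem~\ref{full bijection between lines and deformations} by showing that the assignment $E\mapsto(E,\ E\oplus\nabla E,\ E^\perp)$ is a bijection between the lines appearing there and CY-type liftings, which is precisely what the paper does (in one sentence, claiming it is ``clear''). Your additional verification that $E^2$ and $E^1$ are direct summands of the right rank reducing to $\fil^2$ and $\fil^1$ is more careful than the paper's own argument, but the structure is identical.
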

\begin{proof}
There is a bijection
\begin{center}
\begin{adjustbox}{width=12cm}
\begin{tikzpicture}
 \node (a) [align=center] {$ \ \text{ direct summands }$ \\ $E\subseteq\sum_{i+j=3}\mathfrak{a}^{[i]}\fil^{j}H_{\mathrm{dR}}^{3}(X'/S) \ $ \\ $\text{ lifting }\text{Fil}^{3}H_{\dR}^{3}(X/R) \text{ such}$ \\ $\text{that } (E\oplus\nabla E)^{\perp}=E\oplus\nabla E$};
\node (b) [align=center] [right =0.8cm of a]{$ \ \text{CY-type liftings of Fil}^{\bullet}\mathscr{D}_{S/R}(X)$};
 \draw[->](a) to node [above]{\emph{\small{1:1}}} node [below]{}(b);
 \draw [decorate,decoration={brace,amplitude=2pt},xshift=0pt,yshift=0pt]
(-2.5,-0.8) -- (-2.5,0.8)node [black,xshift=0pt] {};
 \draw [decorate,decoration={brace,amplitude=2pt},xshift=0pt,yshift=0pt]
(2.35,0.8) -- (2.35,-0.8)node [black,xshift=0pt] {};
 \draw [decorate,decoration={brace,amplitude=2pt},xshift=0pt,yshift=0pt]
(3.6,-0.5) -- (3.6,0.5)node [black,xshift=0pt] {};
\draw [decorate,decoration={brace,amplitude=2pt},xshift=0pt,yshift=0pt]
(8.7,0.5) -- (8.7,-0.5)node [black,xshift=0pt] {};
\end{tikzpicture}
\end{adjustbox}
\end{center} 
given by the obvious map:-
\begin{equation*}
E=\langle e\rangle\mapsto 0\subseteq E\subseteq E\oplus\nabla E\subseteq E^{\perp}\subseteq\tilde{P}_{0}/I_{S}\tilde{P}_{0}= H_{\dR}^{3}(X'/S)
\end{equation*}
It is clearly a bijection since the line determines the filtration and vice versa. We conclude by Theorem \ref{full bijection between lines and deformations}.
\end{proof}
\begin{Def*}
We call an $n$-$\mathcal{W}_{S}$-display structure $\mathscr{D}$ on $H_{\cris}^{n}(X/W(S))$ \emph{geometric} if $\mathscr{D}=\mathscr{D}_{S}(X')$ for some smooth deformation $X'/S$ of $X$. 
\end{Def*}
\begin{Theorem}\label{geometric displays and CY-type liftings}
There is a bijection
\begin{center}
\begin{tikzpicture}
 \node (a) [align=center] {$ \ \text{ isomorphism classes of } \ $ \\ $\text{ geometric displays on }$ \\ $H_{\cris}^{3}(X/W(S))$};
\node (b)[right =0.8cm of a] {$\{\text{CY-type liftings of }\mathrm{Fil}^{\bullet}\mathscr{D}_{S/R}(X)\}$};

 \draw[->](a) to node [above]{\emph{1:1}}(b);
 \draw [decorate,decoration={brace,amplitude=3pt},xshift=0pt,yshift=0pt]
(-1.8,-0.55) -- (-1.8,0.55)node [black,xshift=0pt] {};
 \draw [decorate,decoration={brace,amplitude=3pt},xshift=0pt,yshift=0pt]
(1.9,0.55) -- (1.9,-0.55)node [black,xshift=0pt] {};
 \end{tikzpicture}
\end{center}  
\end{Theorem}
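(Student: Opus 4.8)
The plan is to deduce the statement from Theorem~\ref{full bijection between lines and deformations} by means of the Langer--Zink equivalence (the Proposition of Section~\ref{relative displays section}, i.e. \cite[Corollary~11]{LZ15}). First I would fix once and for all a smooth deformation $X'/S$ of $X$, so that for every smooth deformation $Y/S$ of $X/R$ the crystalline comparison identifies $H_{\dR}^{3}(Y/S)$ with the common $S$-module $\tilde{P}_{0}/I_{S}\tilde{P}_{0}=H_{\cris}^{3}(X/S)=H_{\dR}^{3}(X'/S)$; denote this identification by $\Psi$ (it is Deligne's isomorphism and is horizontal for the Gauss--Manin connection). By the crystal property (Theorem~\ref{crystal}) we have $u_{\bullet}\mathscr{D}_{S}(Y)=\mathscr{D}_{S/R}(X)$, so by the Langer--Zink equivalence the geometric display $\mathscr{D}_{S}(Y)$ is the display $\mathscr{D}_{S/R}(X)_{E^{\bullet}}$ attached to the admissible lifting $E^{\bullet}$ of $\fil^{\bullet}\mathscr{D}_{S/R}(X)=\fil^{\bullet}H_{\dR}^{3}(X/R)$ which, under $\Psi$, is $\fil^{\bullet}H_{\dR}^{3}(Y/S)$ (using the description of $\fil^{\bullet}\mathscr{D}_{S}(Y)$ from \S\ref{display on crystalline}). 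Thus the theorem reduces to showing that this assignment $\mathscr{D}_{S}(Y)\mapsto E^{\bullet}$ identifies geometric displays with CY-type liftings, bijectively on isomorphism classes.

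Next I would check that $E^{\bullet}=\Psi(\fil^{\bullet}H_{\dR}^{3}(Y/S))$ is always of CY-type. Condition (i) and the self-duality $(E^{2})^{\perp}=E^{2}$ in (iii) are immediate, because $E^{3}=\Psi(\fil^{3}H_{\dR}^{3}(Y/S))=F_{Y}$ lies in the right-hand side of Theorem~\ref{full bijection between lines and deformations} (this is exactly the well-definedness of $Y\mapsto F_{Y}$ via \cite[Lemme 1.1.2]{Del81}). For condition (ii) I would use that, since $\omega_{Y/S}\cong\mathcal{O}_{Y}$, the cup-product $H_{\dR}^{3}(Y/S)\times H_{\dR}^{3}(Y/S)\to H_{\dR}^{6}(Y/S)\cong S$ is a perfect antisymmetric pairing inducing, by Serre duality, perfect pairings $\mathrm{gr}^{j}\times\mathrm{gr}^{3-j}\to S$, so that $(\fil^{j}H_{\dR}^{3}(Y/S))^{\perp}=\fil^{4-j}H_{\dR}^{3}(Y/S)$; transporting the case $j=3$ along $\Psi$, which respects cup-products, gives $E^{1}=(E^{3})^{\perp}$. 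For the remaining part of (iii), namely $\fil^{2}H_{\dR}^{3}(Y/S)=\fil^{3}\oplus\nabla\fil^{3}$, I would combine Griffiths transversality (so $\nabla\fil^{3}\subseteq\fil^{2}$ and $\nabla$ induces $\mathrm{gr}^{3}\to\mathrm{gr}^{2}\otimes\Omega_{S/W}^{1}$) with Proposition~\ref{Kodaira-Spencer}, whose dual statement shows that $\{\nabla_{1}(e),\ldots,\nabla_{h}(e)\}$ spans $\mathrm{gr}^{2}$; a Nakayama argument using $\operatorname{rk}\fil^{2}=h+1=1+\operatorname{rk}\mathrm{gr}^{2}$ then gives $\fil^{2}=\fil^{3}\oplus\nabla\fil^{3}$, and transporting along $\Psi$ (which intertwines the Gauss--Manin connections, both base-changed from $\mathfrak{X}/\mathfrak{S}$) turns this into $E^{2}=E^{3}\oplus\nabla E^{3}$.

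Conversely, given a CY-type lifting $E^{\bullet}$ with $E^{3}=\langle e\rangle$, conditions (i) and (iii) say exactly that $E^{3}$ is a line in $\sum_{i+j=3}\mathfrak{a}^{[i]}\fil^{j}H_{\dR}^{3}(X'/S)$ lifting $\fil^{3}H_{\dR}^{3}(X/R)$ with $(E^{3}\oplus\nabla E^{3})^{\perp}=E^{3}\oplus\nabla E^{3}$; so Theorem~\ref{full bijection between lines and deformations} produces a deformation $Y/S$ with $F_{Y}=E^{3}$, hence a geometric display $\mathscr{D}_{S}(Y)$ whose associated lifting is $\Psi(\fil^{\bullet}H_{\dR}^{3}(Y/S))$. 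By the previous paragraph this lifting is of CY-type with degree-$3$ piece $E^{3}$, and by conditions (ii)--(iii) a CY-type lifting is determined by its degree-$3$ piece, so it equals $E^{\bullet}$. For the statement on isomorphism classes I would combine the above with Theorem~\ref{defs and CY-type liftings}: a CY-type lifting determines, and is determined by, a class in $\text{Def}_{X}(S)$, and that class determines $\mathscr{D}_{S}(Y)$ up to isomorphism; conversely isomorphic geometric displays have, via the Langer--Zink equivalence, CY-type liftings differing by an automorphism of $\mathscr{D}_{S/R}(X)$, and one checks such an automorphism preserves the line $E^{3}$ and hence the whole lifting.

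The main obstacle will be the compatibility bookkeeping in the first and last steps: one must pin down and match the crystalline identification $H_{\cris}^{3}(Y/W(S))\cong H_{\cris}^{3}(X/W(S))$, Deligne's isomorphism $\Psi$, the Hodge filtration supplied by the Langer--Zink equivalence, and the Hodge filtration $\fil^{\bullet}\mathscr{D}_{S/R}(X)=\fil^{\bullet}H_{\dR}^{3}(X/R)$ of the relative display, and then verify that $\Psi$ is horizontal for Gauss--Manin and multiplicative for cup-products over the possibly non-reduced base $S$ --- together with the Serre-duality computation $(\fil^{j})^{\perp}=\fil^{4-j}$ in that setting. Beyond this the cohomological input is light: conditions (i) and the self-duality $(E^{2})^{\perp}=E^{2}$ are handed to us by Theorem~\ref{full bijection between lines and deformations}, condition (ii) is Poincar\'e self-duality of a Calabi--Yau threefold in the middle degree, and the identity $E^{2}=E^{3}\oplus\nabla E^{3}$ reflects the versality of the deformation space via Proposition~\ref{Kodaira-Spencer}.
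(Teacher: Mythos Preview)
Your proposal is correct and follows essentially the same route as the paper: both arguments hinge on the Langer--Zink equivalence (\cite[Corollary~11]{LZ15}) to pass between $\mathscr{W}_{S}$-displays and admissible liftings of the Hodge filtration of $\mathscr{D}_{S/R}(X)$, and both invoke Theorem~\ref{full bijection between lines and deformations} (equivalently Theorem~\ref{defs and CY-type liftings}) to identify CY-type liftings with deformations, hence with geometric displays.

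The differences are only in emphasis. You spell out explicitly why $\Psi(\fil^{\bullet}H_{\dR}^{3}(Y/S))$ satisfies the three CY-type conditions, invoking Poincar\'e/Serre duality for (ii) and Proposition~\ref{Kodaira-Spencer} plus Nakayama for the identity $E^{2}=E^{3}\oplus\nabla E^{3}$ in (iii); the paper simply asserts ``this lifting came from the deformation $Y$, so one sees that it is of CY-type'' and leaves these checks to the reader. Conversely, the paper verifies that the two maps are mutual inverses by a direct two-line computation with the equivalence of categories, $\xi\circ\eta(E^{\bullet})=\fil^{\bullet}\mathscr{D}_{S/R}(X)_{E^{\bullet}}=E^{\bullet}$ and $\eta\circ\xi([\mathscr{D}_{S}(Y)])=[\mathscr{D}_{S/R}(X)_{\fil^{\bullet}\mathscr{D}_{S}(Y)}]=[\mathscr{D}_{S}(Y)]$, which is cleaner than your closing remark that ``one checks such an automorphism preserves the line $E^{3}$''; you should replace that last step by this direct use of the equivalence, since it avoids having to analyse automorphisms of $\mathscr{D}_{S/R}(X)$ at all.
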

\begin{proof}
Let $\mathscr{D}$ be a geometric $\mathcal{W}_{S}$-display, i.e. $\mathscr{D}=\mathscr{D}_{S}(Y)$ for some smooth deformation $Y$ of $X$. Write 
\begin{equation*}
\tilde{u}_{\bullet}:\left\{\mathcal{W}_{S}\text{-displays}\right\}\rightarrow \left\{\text{extended }\mathcal{W}_{S/R}\text{-displays}\right\}
\end{equation*}
for the functor induced by $u:\mathcal{W}_{S}\rightarrow\mathcal{W}_{S/R}$, and denote by $\mathscr{D}_{S/R}^{\text{ext}}(X)$ the extended $\mathcal{W}_{S/R}$-display associated to $\mathscr{D}_{S/R}(X)$ (see (\cite{LZ15}, p.465) for the notion of extended display). Then:-
\begin{align*}
\text{Hom}_{\mathcal{W}_{S}}(\mathscr{D},\tilde{u}^{\bullet}\mathscr{D}_{S/R}^{\text{ext}}(X))
& =\text{Hom}_{\mathcal{W}_{S}}(\mathscr{D}_{S}(Y),\tilde{u}^{\bullet}\mathscr{D}_{S/R}^{\text{ext}}(X))\\
& \cong\text{Hom}_{\mathcal{W}_{S/R}}(\tilde{u}_{\bullet}\mathscr{D}_{S}(Y),\mathscr{D}_{S/R}^{\text{ext}}(X))\\
& =\text{Hom}_{\mathcal{W}_{S/R}}(\mathscr{D}_{S/R}^{\text{ext}}(X),\mathscr{D}_{S/R}^{\text{ext}}(X))
\end{align*}
where $\tilde{u}^{\bullet}\mathscr{D}_{S/R}^{\text{ext}}(X)$ is just $\mathscr{D}_{S/R}^{\text{ext}}(X)$ viewed as a $\mathcal{W}_{S}$-predisplay.
\par
The canonical map
\begin{equation*}
\mathscr{D}\rightarrow\tilde{u}^{\bullet}\mathscr{D}_{S/R}^{\text{ext}}(X)=\tilde{u}^{\bullet}\tilde{u}_{\bullet}\mathscr{D}
\end{equation*}
coming from the bijection gives $P_{i}\hookrightarrow\tilde{P}_{i}$, and hence a lifting of $\text{Fil}^{\bullet}\mathscr{D}_{S/R}^{\text{ext}}(X)$, i.e. an admissible lifting of $\text{Fil}^{\bullet}\mathscr{D}_{S/R}(X)$. Moreover, this lifting came from the deformation $Y$, so one sees that it is of CY-type. This gives a map
\begin{equation*}
\resizebox{1.0\hsize}{!}{$
\left\{\text{geometric }\mathcal{W}_{S}\text{-displays on }H_{\cris}^{3}(X/W(S))\right\}\rightarrow\left\{\text{CY-type liftings of Fil}^{\bullet}\mathscr{D}_{S/R}(X)\right\}
$}
\end{equation*}
which only depends on the isomorphism class of the display, so we get an induced map
\begin{center}
\begin{tikzpicture}
 \node (a) [align=center] {$ \ \text{ isomorphism classes of } \ $ \\ $\text{ geometric displays on }$ \\ $H_{\cris}^{3}(X/W(S))$};
\node (b)[right =0.8cm of a] {$\{\text{CY-type liftings of }\mathrm{Fil}^{\bullet}\mathscr{D}_{S/R}(X)\}$};

 \draw[->](a) to node [above]{$\xi$}(b);
 \draw [decorate,decoration={brace,amplitude=3pt},xshift=0pt,yshift=0pt]
(-1.8,-0.55) -- (-1.8,0.55)node [black,xshift=0pt] {};
 \draw [decorate,decoration={brace,amplitude=3pt},xshift=0pt,yshift=0pt]
(1.9,0.55) -- (1.9,-0.55)node [black,xshift=0pt] {};
 \end{tikzpicture}
\end{center}  
In order to prove that $\xi$ is a bijection, we shall construct its inverse:-
\par 
By (\cite{LZ15}, Corollary 11) there is an equivalence of categories 
\begin{center}
\begin{tikzpicture}
 \node (a) [align=center] {$\mathcal{W}_{S}\text{-displays } \ $};
\node (b) [align=center] [right =0.8cm of a]{$\mathcal{W}_{S/R}\text{-displays together with an}$ \\ $ \ \text{ admissible lifting of the Hodge filtration}$};
\node (z) [below =0.4cm of a]{};
\node (c) [right =0.1cm of z]{$\mathscr{D}$};
\node (d) [right =2.9cm of c]{$(u_{\bullet}\mathscr{D},\text{Fil}^{\bullet}\mathscr{D}$)};
\node (e) [below =0.4cm of c]{$\tilde{\mathscr{D}}_{E^{\bullet}}$};
\node (f) [right =2.8cm of e]{$(\tilde{\mathscr{D}},E^{\bullet})$};
 \draw[->](a) to node [above]{$\sim$}(b);
 \draw[|->](c) to node {}(d);
 \draw[|->](f) to node {}(e);
 \draw [decorate,decoration={brace,amplitude=2pt},xshift=0pt,yshift=0pt]
(-1.1,-0.25) -- (-1.1,0.25)node [black,xshift=0pt] {};
 \draw [decorate,decoration={brace,amplitude=2pt},xshift=0pt,yshift=0pt]
(1,0.25) -- (1,-0.25)node [black,xshift=0pt] {};
 \draw [decorate,decoration={brace,amplitude=2pt},xshift=0pt,yshift=0pt]
(2.2,-0.5) -- (2.2,0.5)node [black,xshift=0pt] {};
\draw [decorate,decoration={brace,amplitude=2pt},xshift=0pt,yshift=0pt]
(8.6,0.5) -- (8.6,-0.5)node [black,xshift=0pt] {};
\end{tikzpicture}
\end{center} 
Since CY-type liftings of $\text{Fil}^{\bullet}\mathscr{D}_{S/R}(X)$ are admissible, the above functor induces a map
\begin{center}
\begin{tikzpicture}
\node (a) {$\{\text{CY-type liftings of }\mathrm{Fil}^{\bullet}\mathscr{D}_{S/R}(X)\}$}; 
 \node (b) [right =0.8cm of a][align=center] {$ \ \text{ isomorphism classes } \ $ \\ \text{of }$\mathcal{W}_{S}\text{-displays on }$ \\ $H_{\cris}^{3}(X/W(S))$};
 \node (c) [below =0.8cm of a]{$E^{\bullet}$};
\node (d) [right =4cm of c] {$[\mathscr{D}_{S/R}(X)_{E^{\bullet}}]$};
 \draw[->](a) to (b);
 \draw [decorate,decoration={brace,amplitude=3pt},xshift=0pt,yshift=0pt]
(3.9,-0.55) -- (3.9,0.55)node [black,xshift=0pt] {};
 \draw [decorate,decoration={brace,amplitude=3pt},xshift=0pt,yshift=0pt]
(7.1,0.55) -- (7.1,-0.55)node [black,xshift=0pt] {};
\draw [|->](c) to (d);
 \end{tikzpicture}
\end{center}  
First we show that the image of this map is geometric. Recall from Theorem \ref{full bijection between lines and deformations} that there is a bijection 
\begin{center}
\begin{adjustbox}{width=12cm}
\begin{tikzpicture}
 \node (a) [align=center] {$\text{ isomorphism classes of \ }$ \\ $\text{deformations of }X/R \text{ over }S \ \ $};
\node (b) [align=center] [right =0.8cm of a]{$ \ \text{ direct summands }$ \\ \ $E\subset\sum_{i+j=3}\mathfrak{a}^{[i]}\fil^{j}H_{\mathrm{dR}}^{3}(X'/S)$ \\ $\text{ lifting }\text{Fil}^{3}H_{\dR}^{3}(X/R) \text{ such}$ \\ $\text{that }(E\oplus\nabla E)^{\perp}=E\oplus\nabla E$};
\node (z) [below =0.6cm of a]{};
\node (c) [right =0.01cm of z]{$[Y]$};
\node (d) [right =2.9cm of c]{$\beta([Y]):=\Psi^{-1}(\text{Fil}^{3}H_{\dR}^{3}(Y/S))$};
\node (e) [left =0.1cm of a]{$\beta:$};
 \draw[->](a) to node [above]{\small{1:1}}(b);
 \draw[|->](c) to node {}(d);
 \draw [decorate,decoration={brace,amplitude=2pt},xshift=0pt,yshift=0pt]
(-2.45,-0.5) -- (-2.45,0.5)node [black,xshift=0pt] {};
 \draw [decorate,decoration={brace,amplitude=2pt},xshift=0pt,yshift=0pt]
(2.25,0.5) -- (2.25,-0.5)node [black,xshift=0pt] {};
 \draw [decorate,decoration={brace,amplitude=2pt},xshift=0pt,yshift=0pt]
(3.4,-0.85) -- (3.4,0.85)node [black,xshift=0pt] {};
\draw [decorate,decoration={brace,amplitude=2pt},xshift=0pt,yshift=0pt]
(8.5,0.85) -- (8.5,-0.85)node [black,xshift=0pt] {};
\end{tikzpicture}
\end{adjustbox}
\end{center} 
where $\Psi:H_{\dR}^{3}(X'/S)\xrightarrow{\sim} H_{\dR}^{3}(Y/S)$ is the ``parallel transport'' isomorphism coming from the Gauss-Manin connection. The preceding lemma then yields a one-to-one correspondence between isomorphism classes of deformations of $X/R$ to $\text{Spec }S$ and CY-type liftings of $\text{Fil}^{\bullet}\mathscr{D}_{S/R}(X)$. 
\par 
Given a CY-type lifting $E^{\bullet}$ of $\text{Fil}^{\bullet}\mathscr{D}_{S/R}(X)$, corresponding to the isomorphism class $[Y]$, we claim that $\mathscr{D}_{S/R}(X)_{E^{\bullet}}\cong\mathscr{D}_{S}(Y)$. Indeed, by the construction of the bijection $\beta$, we have that $E^{\bullet}=\text{Fil}^{\bullet}H_{\dR}^{3}(Y/S)=\text{Fil}^{\bullet}\mathscr{D}_{S}(Y)$. Then the equivalence of categories gives
\begin{equation*}
\mathscr{D}_{S/R}(X)_{E^{\bullet}}=\left(u_{\bullet}\mathscr{D}_{S}(Y)\right)_{\text{Fil}^{\bullet}\mathscr{D}_{S}(Y)}\cong\mathscr{D}_{S}(Y)
\end{equation*}  
So we have two maps:-
\begin{center}
\begin{tikzpicture}
 \node (a) [align=center] {$\text{CY-type liftings } \ $ \\ $\text{ of Fil}^{\bullet}\mathscr{D}_{S/R}(X)$ \ };
\node (b) [align=center] [right =1cm of a]{$ \ \text{isomorphism classes of}$ \\ $ \ \text{ geometric }\mathcal{W}_{S}\text{-displays }$ \\ $\text{ on }H_{\cris}^{3}(X/W(S))$};
\path[->]
([yshift= 2.5pt]a.east) edge node[above] {$\eta$} ([yshift= 2.5pt]b.west)
([yshift= -2.5pt]b.west) edge node[below] {$\xi$} ([yshift= -2.5pt]a.east);

  \draw [decorate,decoration={brace,amplitude=2pt},xshift=0pt,yshift=0pt]
(-1.45,-0.5) -- (-1.45,0.5)node [black,xshift=0pt] {};
 \draw [decorate,decoration={brace,amplitude=2pt},xshift=0pt,yshift=0pt]
(1.25,0.5) -- (1.25,-0.5)node [black,xshift=0pt] {};
 \draw [decorate,decoration={brace,amplitude=2pt},xshift=0pt,yshift=0pt]
(2.75,-0.55) -- (2.75,0.55)node [black,xshift=0pt] {};
\draw [decorate,decoration={brace,amplitude=2pt},xshift=0pt,yshift=0pt]
(6.45,0.55) -- (6.45,-0.55)node [black,xshift=0pt] {};
\end{tikzpicture}
\end{center} 
That these are inverses follows easily from the equivalence of categories:-
\begin{align*}
\xi\circ\eta(E^{\bullet})
& =\xi([\mathscr{D}_{S/R}(X)_{E^{\bullet}}]) \\
& =\mathrm{Fil}^{\bullet}\mathscr{D}_{S/R}(X)_{E^{\bullet}} \\
& =E^{\bullet}
\end{align*}
\begin{align*}
\eta\circ\xi([\mathscr{D}_{S}(Y)])
& =\eta(\mathrm{Fil}^{\bullet}\mathscr{D}_{S}(Y)) \\
& = [\mathscr{D}_{S/R}(X)_{\mathrm{Fil}^{\bullet}\mathscr{D}_{S}(Y)}] \\
& =[u_{\bullet}\mathscr{D}_{S}(Y)_{\mathrm{Fil}^{\bullet}\mathscr{D}_{S}(Y)}] \\
& = [\mathscr{D}_{S}(Y)]
\end{align*}
\end{proof}
Write $\underline{\mathcal{CY}3}_{S}$ for the category whose objects are deformations of $X_{0}$ over $\mathrm{Spec}\,S$, and whose morphisms are isomorphisms. Write $\underline{\mathcal{DCY}3}_{S/R}$ for the category whose objects are pairs $(X,E^{\bullet})$ where $X\rightarrow\text{Spec }R$ is a deformation of $X_{0}$ and  $E^{\bullet}$ is a CY-type lifting of $\text{Fil}^{\bullet}\mathscr{D}_{S/R}(X)$. The morphisms are isomorphisms with the obvious compatibilities. By Theorem \ref{geometric displays and CY-type liftings}, $\underline{\mathcal{DCY}3}_{S/R}$ can also be understood to be the category of pairs $(X',[\mathscr{D}])$ where $\mathscr{D}$ is a $\mathscr{W}_{S}$-display structure on $H^{3}_{\mathrm{cris}}(X/W(S))$.
\par
Let $X'\in\underline{\mathcal{CY}3}_{S}$ and write $X'_{R}$ for its reduction over $\text{Spec }R$. The proof of Theorem \ref{geometric displays and CY-type liftings} showed that $\text{Fil}^{\bullet}\mathscr{D}_{S}(X')$ is a CY-type lifting of $\text{Fil}^{\bullet}\mathscr{D}_{S/R}(X'_{R})$. By the compatibility of crystalline cohomology with base-change, we get a functor
\begin{equation*}
\underline{\mathcal{CY}3}_{S}\rightarrow\underline{\mathcal{DCY}3}_{S/R} \ \ ; \ \ X'\mapsto (X'_{R},\text{Fil}^{\bullet}\mathscr{D}_{S}(X'))
\end{equation*} 
We can then rephrase the results of this section as the following:-
\begin{Theorem}\label{CY3 and DCY3}
The above functor is full and essentially surjective. 
\end{Theorem}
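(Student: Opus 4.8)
The plan is to derive both properties of the functor — call it $\Theta\colon\underline{\mathcal{CY}3}_{S}\to\underline{\mathcal{DCY}3}_{S/R}$, $X'\mapsto(X'_{R},\mathrm{Fil}^{\bullet}\mathscr{D}_{S}(X'))$ — from the bijections already in hand, namely Theorem~\ref{full bijection between lines and deformations}, Theorem~\ref{defs and CY-type liftings} and Theorem~\ref{geometric displays and CY-type liftings}. Essential surjectivity will fall out immediately; fullness needs one extra input, the functoriality of the relative display $\mathscr{D}_{S/R}$ in the base deformation, which is part of the crystal property of Theorem~\ref{crystal}.

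\emph{Essential surjectivity.} Let $(X,E^{\bullet})$ be an object of $\underline{\mathcal{DCY}3}_{S/R}$, so $X\to\mathrm{Spec}\,R$ is a deformation of $X_{0}$ and $E^{\bullet}$ is a CY-type lifting of $\mathrm{Fil}^{\bullet}\mathscr{D}_{S/R}(X)$. By Theorem~\ref{defs and CY-type liftings} the lifting $E^{\bullet}$ corresponds to a deformation $Y\in\mathrm{Def}_{X}(S)$, and — as recorded in the proof of Theorem~\ref{geometric displays and CY-type liftings} — for this $Y$ one has $E^{\bullet}=\mathrm{Fil}^{\bullet}H_{\dR}^{3}(Y/S)=\mathrm{Fil}^{\bullet}\mathscr{D}_{S}(Y)$. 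Since $Y$ is a deformation of $X/R$ over $S$ we have $Y_{R}\cong X$, and therefore $\Theta(Y)=(Y_{R},\mathrm{Fil}^{\bullet}\mathscr{D}_{S}(Y))\cong(X,E^{\bullet})$ in $\underline{\mathcal{DCY}3}_{S/R}$.

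\emph{Fullness.} Let $X',X''\in\underline{\mathcal{CY}3}_{S}$ and let $\phi\colon\Theta(X')\to\Theta(X'')$ be a morphism, that is, an $R$-isomorphism $\bar{\phi}\colon X'_{R}\xrightarrow{\sim}X''_{R}$ such that the isomorphism of relative displays $\mathscr{D}_{S/R}(X'_{R})\cong\mathscr{D}_{S/R}(X''_{R})$ induced by $\bar{\phi}$ (which exists because $\mathscr{D}_{S/R}$ is a crystal, hence functorial for isomorphisms of the base deformation) matches $\mathrm{Fil}^{\bullet}\mathscr{D}_{S}(X')$ with $\mathrm{Fil}^{\bullet}\mathscr{D}_{S}(X'')$. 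Using $\bar{\phi}$ to identify $X''_{R}$ with $X:=X'_{R}$, we view both $X'$ and $X''$ as objects of $\mathrm{Def}_{X}(S)$; unwinding the construction in the proof of Theorem~\ref{geometric displays and CY-type liftings}, the bijection of Theorem~\ref{defs and CY-type liftings} sends a deformation $Z/S$ of $X/R$ to the CY-type lifting $\mathrm{Fil}^{\bullet}\mathscr{D}_{S}(Z)$ of $\mathrm{Fil}^{\bullet}\mathscr{D}_{S/R}(X)$. Hence $X'$ and $X''$ (the latter transported along $\bar{\phi}$) are sent to the same CY-type lifting, and since that correspondence is a genuine bijection — injectivity being the point noted at the end of the proof of Theorem~\ref{full bijection between lines and deformations}, ultimately coming from Proposition~\ref{bijection between lines and deformations} — the deformations $X'$ and $X''$ are isomorphic as deformations of $X$ over $\mathrm{Spec}\,S$. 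Any such isomorphism $\psi\colon X'\xrightarrow{\sim}X''$ has reduction $\psi_{R}=\bar{\phi}$ by the definition of an isomorphism of deformations, so $\Theta(\psi)=\phi$; compatibility of $\Theta(\psi)$ with the Hodge filtrations is automatic from functoriality of $\mathscr{D}_{S}$ and of base change $\tilde{u}_{\bullet}$.

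The step I expect to demand genuine care rather than bookkeeping is the functoriality invoked in the fullness argument: one has to verify that an $R$-isomorphism $\bar{\phi}\colon X'_{R}\xrightarrow{\sim}X''_{R}$ really does induce an isomorphism of relative displays respecting Hodge filtrations, and that this is compatible both with the identifications $\mathscr{D}_{S/R}(X)_{E^{\bullet}}\cong\mathscr{D}_{S}(Z)$ of Theorem~\ref{geometric displays and CY-type liftings} and with the parallel-transport isomorphism $\Psi$ of Theorem~\ref{full bijection between lines and deformations}. Once that compatibility is pinned down (it follows from the crystal property of Theorem~\ref{crystal} together with the base-change compatibility of crystalline cohomology), the rest is a diagram chase through the bijections above.
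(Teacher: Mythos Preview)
Your proposal is correct and follows essentially the same route as the paper: both arguments derive essential surjectivity and fullness from the bijection of Theorem~\ref{defs and CY-type liftings} (equivalently Theorem~\ref{full bijection between lines and deformations}). The only cosmetic difference is that the paper reduces fullness to lifting an automorphism $\alpha\colon X\xrightarrow{\sim}X$ to an automorphism of a single lift $Y$ (rephrasing this as an isomorphism between two deformation structures $(Y,\rho')$ and $(Y,\rho'\circ\alpha_{0})$ on the same $S$-scheme), whereas you treat general isomorphisms $X'\to X''$ directly by transporting $X''$ along $\bar\phi$ to a second deformation of $X'_{R}$; both then invoke injectivity of the bijection to conclude.
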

\begin{proof}
We have shown that the functor is essentially surjective. It is left to show that it is also full. Let $X$ be a deformation of $X_{0}$ over $\mathrm{Spec}\,R$, and let $Y$ be a deformation of $X$ over $\mathrm{Spec}\,S$. Let $\alpha:X\xrightarrow{\sim} X$ be an automorphism of $X$ (just as an $R$-scheme, not necessarily as a deformation). The claim, then, is that $\alpha$ lifts to an automorphism of $Y$ if and only if $\alpha^{\ast}:H^{3}_{\mathrm{cris}}(X/S)\rightarrow H^{3}_{\mathrm{cris}}(X/S)$ respects $\mathrm{Fil}^{\bullet}\mathscr{D}_{S}(Y)=\mathrm{Fil}^{\bullet}H^{3}_{\mathrm{dR}}(Y/S)$. Now, $X$ being a deformation of $X_{0}$ means that $X$ is an $R$-scheme with a given isomorphism $\rho:X_{0}\xrightarrow{\sim}X\times_{\mathrm{Spec}\,R}\mathrm{Spec}\,k$. Let $\alpha_{0}:X_{0}\xrightarrow{\sim}X_{0}$ be the automorphism of $X_{0}$ induced by $\alpha$. Then to give a lifting $\alpha':Y\xrightarrow{\sim}Y$ of $\alpha$ is equivalent to giving an isomorphism $(Y,\rho')\cong (Y,\rho'\circ\alpha_{0})$ of deformations of $X_{0}$. Here $\rho':X_{0}\xrightarrow{\sim}Y\times_{\mathrm{Spec}\,S}\mathrm{Spec}\,k$ is the given isomorphism making $Y$ a deformation of $X_{0}$. We conclude by Theorem \ref{defs and CY-type liftings}.
\end{proof}
\begin{Corollary}\label{CY3 and DCY3 equivalence}
If $H^{0}(X_{0},\mathcal{T}_{X_{0}/k})=0$ then the above functor is an equivalence of categories.
\end{Corollary}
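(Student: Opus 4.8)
The plan is to deduce the corollary from Theorem~\ref{CY3 and DCY3} by proving that the functor $\underline{\mathcal{CY}3}_{S}\to\underline{\mathcal{DCY}3}_{S/R}$ is \emph{faithful}; since it is already full and essentially surjective, faithfulness upgrades it to an equivalence of categories. To check faithfulness it suffices to show that an automorphism in $\underline{\mathcal{CY}3}_{S}$ sent to an identity morphism is itself the identity: given parallel $\phi,\psi\colon X'\to Y'$ with the same image, apply this to $\theta:=\psi^{-1}\circ\phi\colon X'\xrightarrow{\sim}X'$. Unwinding the definition of the functor, "the image of $\theta$ is an identity" means precisely that the reduction $\theta_{R}\colon X'_{R}\xrightarrow{\sim}X'_{R}$ over $\mathrm{Spec}\,R$ is the identity: the filtration datum $\mathrm{Fil}^{\bullet}\mathscr{D}_{S}(X')$ attached to $X'$ lives in $H^{3}_{\cris}(X'_{R}/S)\cong H^{3}_{\dR}(X'/S)$, which by crystalline base change depends only on $X'_{R}$ and is functorial in it, so a morphism in $\underline{\mathcal{DCY}3}_{S/R}$ records nothing beyond the underlying isomorphism of $R$-deformations (its compatibility with the filtration being automatic for a morphism that lifts to $\mathrm{Spec}\,S$). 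Thus we are reduced to showing: an $S$-automorphism of $X'$ restricting to the identity on $X'_{R}$ is the identity.

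For this I would filter the PD-ideal $\mathfrak{a}=\ker(S\twoheadrightarrow R)$. Since $S$ is Artinian local with residue field $k$, the chain $\mathfrak{a}=\mathfrak{b}_{0}\supseteq\mathfrak{b}_{1}\supseteq\cdots\supseteq\mathfrak{b}_{n}=0$ with $\mathfrak{b}_{i+1}:=\mathfrak{m}_{S}\mathfrak{b}_{i}$ has each quotient $\mathfrak{c}_{i}:=\mathfrak{b}_{i}/\mathfrak{b}_{i+1}$ annihilated by $\mathfrak{m}_{S}$, hence a finite-dimensional $k$-vector space, and each $S/\mathfrak{b}_{i+1}\twoheadrightarrow S/\mathfrak{b}_{i}$ is a square-zero extension. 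An induction along this chain reduces everything to the standard fact that, for a square-zero extension $\widetilde{B}\twoheadrightarrow B$ of Artinian local $W$-algebras whose kernel is a $k$-vector space $\mathfrak{c}$, and a smooth lift $\widetilde{X}/\widetilde{B}$ of a deformation $X_{B}/B$ of $X_{0}$, the group of automorphisms of $\widetilde{X}/\widetilde{B}$ restricting to the identity on $X_{B}$ is identified, via $\delta\mapsto\mathrm{id}+\delta$, with the derivations $\mathrm{Der}_{\widetilde{B}}(\mathcal{O}_{\widetilde{X}},\mathfrak{c}\,\mathcal{O}_{\widetilde{X}})=H^{0}(X_{0},\mathcal{T}_{X_{0}/k})\otimes_{k}\mathfrak{c}$, the last equality because $\Omega^{1}$ is locally free and commutes with base change (so $\mathfrak c\,\mathcal O_{\widetilde X}\cong\mathfrak c\otimes_k\mathcal O_{X_0}$ and the Hom-sheaf restricts to $\mathcal T_{X_0/k}\otimes_k\mathfrak c$). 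By hypothesis $H^{0}(X_{0},\mathcal{T}_{X_{0}/k})=0$, so this group vanishes; hence at every step of the chain the only automorphism in question is the identity, and therefore $\theta=\mathrm{id}$. This proves faithfulness, and with it the corollary.

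The argument is essentially a formal consequence of Theorem~\ref{CY3 and DCY3} together with the classical principle that vanishing of $H^{0}(\mathcal{T})$ rigidifies a deformation problem, so I do not expect a serious obstacle. The one point needing a little care is the identification in the first paragraph of the morphisms of $\underline{\mathcal{DCY}3}_{S/R}$ with isomorphisms of $R$-deformations compatible with the filtration: this rests on the fact, built into the crystal of relative displays of \S\ref{Crystals of relative displays section}, that $\mathscr{D}_{S/R}(X)$ and the ambient space $H^{3}_{\cris}(X/S)$ depend only on the reduction over $R$, so that the extra filtration datum in an object of $\underline{\mathcal{DCY}3}_{S/R}$ imposes a condition on morphisms but carries no morphisms of its own.
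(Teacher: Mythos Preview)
Your proof is correct and follows essentially the same approach as the paper: both deduce faithfulness from the fact that $H^{0}(X_{0},\mathcal{T}_{X_{0}/k})=0$ forces any $S$-automorphism of $X'$ restricting to the identity on $X'_{R}$ to be the identity. The paper's proof is a terse two-sentence appeal to this standard rigidity principle, whereas you spell out the filtration-by-square-zero-extensions argument explicitly; the content is the same.
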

\begin{proof}
The group $H^{0}(X_{0},\mathcal{T}_{X_{0}/k})$ controls the infinitesimal automorphisms. In the notation of the previous proof, $H^{0}(X_{0},\mathcal{T}_{X_{0}/k})=0$ implies that there are no non-trivial automorphisms of $Y$ deforming the identity on $X$. 
\end{proof}
Note that the condition in Corollary \ref{CY3 and DCY3 equivalence} implies that $\mathfrak{S}$ is universal.

\end{document}